\documentclass[11pt]{article}
\usepackage{amsmath,amsfonts}
\usepackage{txfonts,wasysym,lscape,amssymb,mathrsfs,extarrows}
\usepackage[all,pdf]{xy}
\usepackage[colorlinks,linkcolor=black,anchorcolor=blue,citecolor=green]{hyperref}
\usepackage[titletoc,title]{appendix}
\usepackage[algo2e,vlined,ruled]{algorithm2e}

\usepackage{algorithm}
\usepackage{algorithmic}

\usepackage{fancyhdr}

\usepackage{amsfonts}
\usepackage{setspace,url}
\usepackage{graphics,graphicx,epstopdf}
\usepackage{float}
\usepackage{placeins}
\usepackage{tabularx}
\usepackage{longtable}
\usepackage{booktabs,multirow,array,multicol}
\usepackage{enumitem}
\usepackage{subfig}
\ifpdf
  \DeclareGraphicsExtensions{.eps,.pdf,.png,.jpg}
\else
  \DeclareGraphicsExtensions{.eps}
\fi
\newcommand{\expplot}[1]{\includegraphics[width=0.45\textwidth,trim=12bp 218bp 12bp 218bp,clip]{#1}}
\newtheorem{theorem}{Theorem}[section]
\newtheorem{proposition}{Proposition}[section]
\newtheorem{lemma}{Lemma}[section]
\newtheorem{corollary}{Corollary}[section]

\newtheorem{remark}{Remark}[section]

\newcommand{\R}{{\mathbb R}}

\newcommand{\be}{\begin{equation}}
\newcommand{\ee}{\end{equation}}

\title{ A Quadratic-Approximation-Based Stochastic Approximation Method for Weakly Convex Stochastic Programming\thanks{Supported by National Key R\&D Program of China under project number 2022YFA1004000 and National Natural Science Foundation of China (No. 12371298).}}

\author{
Yule Zhang\footnote{
School of   Science, Dalian Maritime University,
Dalian 116085, China.(ylzhang@dlmu.edu.cn)}, \,\,
Benqi Liu\footnote{Beijing International Center for Mathematical Research, Peking University, Beijing 100871, China.(bqliu@pku.edu.cn)},\,\,
Xiantao Xiao\footnote{Institute of Operations Research and Control Theory, School of Mathematical Sciences, Dalian University
of Technology, Dalian 116024, China.(xtxiao@dlut.edu.cn)}\,\, and Liwei Zhang\footnote{National Frontiers Science Center for Industrial Intelligence and Systems Optimization, Northeastern University, Shenyang 110819, China;  Key Laboratory of Data Analytics and Optimization for Smart Industry  (Northeastern University),  Ministry of Education, Shenyang 110819, China. (zhanglw@mail.neu.edu.cn)}}

\date{}

\begin{document}

\maketitle
 \vspace{2mm}
\begin{center}
\parbox{13.5cm}{\small
\textbf{Abstract.}
We propose a novel stochastic approximation algorithm, termed PMQSopt, for solving weakly convex stochastic optimization problems involving expectation-valued functions. The algorithm is constructed by integrating the proximal method of multipliers with quadratic approximations of the original stochastic problem. We analyze the sample complexity of PMQSopt in terms of the total number of stochastic gradient evaluations required. The convergence of the algorithm is characterized by three metrics associated with the $\epsilon$-KKT conditions: the average squared norm of the gradient of the Moreau envelope of the Lagrangian, the average constraint violation, and the average complementarity violation. For each of these metrics, we establish an expected convergence rate of $\mathcal{O}(T^{-1/4})$ after $T$ iterations. Furthermore, we show that with probability at least $1-1/T^{2/3}$, the gradient of the Lagrangian satisfies an $\mathcal{O}(T^{-1/8})$ bound; with probability at least $1-2/T^{2/3}$, the constraint violation achieves an $\mathcal{O}(T^{-1/4})$ bound; and with probability at least $1-3/T^{2/3}$, the complementarity violation attains an $\mathcal{O}(T^{-1/4})$ bound. All results are established under two mild conditions: (i) weak convexity of all problem functions, and (ii) the existence of a strictly feasible point. The proposed PMQSopt algorithm is a sequentially strongly convex programming method that is readily implementable. Numerical experiments illustrate its practical performance.
\\[10pt]
\textbf{Key words.} stochastic approximation, proximal  method of multipliers, quadratic approximations, the Moreau envelope of the Lagrangian, constraint violation, complementarity violation, high probability guarantee.
\textbf{AMS Subject Classifications(2000):} 90C30. }
\end{center}

\section{Introduction}\label{Sec1}
\setcounter{equation}{0}
\quad \, In this paper, we consider the following stochastic optimization problem
\begin{equation}\label{eq:1}
\begin{array}{rl}
\displaystyle \min_{x \in X_0} & f(x)=\mathbb{E}[F(x,\xi)]\\[4pt]
{\rm s.t.} & g_i(x)=\mathbb{E}[G_i(x,\xi)] \leq 0, i=1,\ldots,p.\\
\end{array}
\end{equation}
Here $X_0 \subset \mathbb R^n$ is a nonempty  closed convex set, $\xi$ is a random vector whose probability distribution $P$  is supported on set $\Xi \subseteq \mathbb R^q$ and $F: {\cal O}_0 \times \Xi \rightarrow \mathbb R$, $G_i:{\cal O}_0 \times \Xi \rightarrow \mathbb R$, $i=1,\ldots, p$, where ${\cal O}_0 \subset \mathbb R^n$ is an open convex set containing $X_0$. The Lagrangian of Problem (\ref{eq:1}) is defined by
\begin{equation}\label{eq:lag}
L(x,\lambda)=f(x)+\displaystyle \sum_{j=1}^p \lambda_j g_j(x), \quad (x, \lambda) \in {\cal O}_0 \times \mathbb R^p.
\end{equation}
It is well known that a fundamental computational challenge in solving the stochastic optimization problem (\ref{eq:1}) lies in the fact that the expectation is a multidimensional integral, which cannot be evaluated with high accuracy when the dimension $q$ is large. Stochastic approximation (SA) methods constitute an important class of numerical techniques for addressing this difficulty and have been intensively studied in recent years. The SA method dates back to the pioneering work of Robbins and Monro \cite{Robbins1951} on finding a root of an equation defined by a conditional expectation. It was later extended to systems of equations defined by conditional expectations by Chung \cite{Chung1954} and Sacks \cite{Sacks1958}. Since finding a stationary point of an unconstrained optimization problem is equivalent to solving a system of equations, the SA method can be readily extended to solve unconstrained stochastic optimization problems. Due to their low computational cost per iteration, SA algorithms have become widely used in stochastic optimization; see, e.g., \cite{Polyak1992, Nemirovski2009, Xiao2010,Shalev-Shwartz2011}.

An important improvement of the SA method was developed by Polyak and Juditsky \cite{Polyak1992}, who adopted an averaging technique to accelerate their SA algorithm. Subsequently, Nemirovski, Juditsky, Lan, and Shapiro \cite{Nemirovski2009} introduced a robust averaging approach and showed that, without assuming smoothness and strong convexity of the objective function, the convergence rate is ${\rm O}(t^{-1/2})$. Following this seminal work, numerous important improvements have emerged, even for nonconvex stochastic optimization problems; see, e.g., \cite{Lan2012, Lan2012a, Ghadimi2012,Ghadimi2013, Ghadimi2013a, Ghadimi2016}. In all of the aforementioned works, the feasible region is assumed to be an abstract closed convex set. Consequently, none of these SA algorithms are directly applicable to problems involving expectation constraints.

Over the past decade, numerous works on stochastic approximation methods for constrained stochastic optimization problems have been published. For convex stochastic optimization problems with expectation constraints, Yu et al. \cite{Yu2017} propose an algorithm that achieves ${\rm O}(1/\sqrt{T})$ expected regret and constraint violations, along with ${\rm O}(\log T/\sqrt{T})$ high-probability regret and constraint violations. Lan and Zhou \cite{Lan2020a} develop a cooperative SA approach that attains the optimal ${\rm O}(1/\sqrt{T})$ convergence rate for both the optimality gap and constraint violation. Xu \cite{Xu2020} introduces a primal-dual stochastic gradient method for convex programs with multiple functional constraints, achieving the optimal ${\rm O}(1/\sqrt{T})$ convergence rate for the convex case and a nearly optimal ${\rm O}(\log T / T)$ rate for the strongly convex case. Zhang et al. \cite{Zhang2023} construct a primal method of multipliers for convex stochastic optimization problems, which achieves ${\rm O}(1/\sqrt{T})$ expected regret and constraint violations, as well as ${\rm O}(\log T / \sqrt{T})$ high-probability regret and constraint violations.

For nonconvex constrained stochastic optimization problems, several interesting works have been published under different problem settings. Jin and Wang \cite{Jin2022} establish iteration and sample complexities for a primal-dual algorithm that solves a constrained optimization problem whose objective function is an expectation plus a convex function, subject to inequality constraints. Boob et al. \cite{Boob2023} develop a double-loop algorithm for a class of nonconvex constrained stochastic optimization problems and analyze its complexity in finding an $(\epsilon, \delta)$-KKT point. Li et al. \cite{Li2024} consider an expectation equality-constrained optimization problem, design inexact augmented Lagrangian methods, and provide an oracle complexity result. Curtis \cite{Curtis2024} proves a worst-case complexity bound for an SQP algorithm solving optimization problems involving a stochastic objective function and deterministic nonlinear equality constraints. Shi et al. \cite{Shi2025} consider a constrained optimization problem where the objective function is an expectation plus a convex function, with both equality and inequality constraints. They propose a momentum-based linearized augmented Lagrangian method for this problem, analyze its asymptotic properties, and establish sample complexities.

The aim of paper is to investigate the possibility of constructing a new stochastic approximation algorithm for solving the nonconvex stochastic problem (\ref{eq:1}) with good sample complexities under some reasonable assumptions about $F$ and $G_i$ for $i=1,\ldots, p$.  We arrive at this goal by constructing an algorithm  basing on the proximal method of multipliers for  quadratic approximations to  the stochastic optimization problem, and obtain good sample complexities and high prability guarantees for Karush-Kuhn-Tucker conditions under the assumption that $F$ and $G_i$ are weakly convex. For weakly convex stochastic optimization problems without functional constraints, Davis and Drusvyatskiy (2019) \cite{Davis2019} showed that the proximal stochastic subgradient method achieves a sample complexity of $\mathcal{O}(T^{-1/2})$ in terms of the squared norm of the gradient of the Moreau envelope of the objective function. However, to the best of our knowledge, no analogous results have been established for constrained problems of the form (\ref{eq:1}).

  For i.i.d. sample $\xi_1,\xi_2,\ldots$ of realizations of random vector $\xi$. Like a stochastic approximation approach for convex stochastic optimization in literatures, we consider the following  optimization problem
\begin{equation}\label{eq:Pt}
\begin{array}{ll}
\displaystyle \min_{x \in X_0} & F(x,\xi_t)\\[4pt]
{\rm s.t.} & G_i(x,\xi_t) \leq 0,i=1,\ldots,p.\\
\end{array}
\end{equation}
The quadratically constrained quadratic programming (QCQP) approximation for Problem (\ref{eq:Pt})  at $x^t$ is defined as
\begin{equation}\label{eq:LPt}
\begin{array}{ll}
\min & q^t_0(x) \\[4pt]
{\rm s.t.} &  q^t_i(x)  \leq 0,i=1,\ldots,p,\\[4pt]
 & x\in X_0,
\end{array}
\end{equation}
where $q^t_j(x),j=0,1,\ldots,p$ are quadratic approximations of $F(x,\xi_t)$ and $G_j(x,\xi_t)$, $j=1,\ldots,p$ at $x^t$, respectively. Functions $q^t_j(x),j=0,1,\ldots,p$  are defined by
\begin{equation}\label{eq:qs}
\begin{array}{l}
q^t_0(x)= F(x^t,\xi_t)+\langle \nabla_xF(x^t,\xi_t),x-x^t \rangle+\displaystyle \frac{1}{2}\langle \Sigma^t_0(x-x^t),x-x^t\rangle\\[4pt]
q^t_i(x)= G_i(x^t,\xi_t)+ \langle \nabla_xG_i(x^t,\xi_t),x-x^t \rangle +\displaystyle \frac{1}{2}\langle \Sigma^t_i(x-x^t),x-x^t\rangle,\,\,
i=1,\ldots,p.
\end{array}
\end{equation}
The augmented Lagrange function for Problem (\ref{eq:LPt})
is defined by
\begin{equation}\label{augL}
{\cal L}^t_{\sigma}(x,\lambda): =q_0^t(x) +\displaystyle \frac{1}{2\sigma}\left[ \sum_{i=1}^p[\lambda_i+\sigma q^t_i(x)]_+^2-\|\lambda\|^2\right]
\end{equation}
for $(x,\lambda)\in \mathbb R^n\times \mathbb R^p$.

 The proximal method of multipliers for Problem (\ref{eq:1}) with quadratic approximations can be described as follows, where the Hessian matrix $\Sigma^t_0$ is chosen in a smart way to be positive definite.\mbox{}\\[4pt]
{\bf PMQSopt}: A  proximal method of multipliers  with quadratic  approximations
\begin{description}
\item[Step 0 ] Input $\lambda^1=0 \in \mathbb R^p$, $x^1 \in \mathbb R^n$, a positive  integer $N$ and a set of samples $\{\xi_1,\ldots,\xi_N\}$. Positive parameters $\sigma$, $\alpha$ and $\tau$. Set $t:=1$.
\item[ Step 1] Select symmetric matrices $\Sigma^t_i$ for $i=1,\ldots, p$ and define
$$
\Sigma^t_0=-\displaystyle \sum_{i=1}^p \lambda^t_i \Sigma^t_i+\tau I.$$
\item[ Step 2] Set
 \begin{equation}\label{xna}
\begin{array}{l}
x^{t+1}= \displaystyle\hbox{arg}\min \,\left\{ {\cal L}^t_{\sigma }(x,\lambda^t) +\displaystyle\frac{\alpha}{2}\|x-x^t\|^2,x \in X_0\right\},\\[5mm]
\lambda_i^{t+1}=[\lambda_i^t+\sigma q^t_i(x^{t+1})]_+,\,\,i=1,\ldots,p.
\end{array}
\end{equation}
\item[ Step 3] Set $t:=t+1$ if $t<T$ and go to Step 1.
\item[ Output] $(x^{R},\lambda^{R})$, where $R\in [T]$ is uniformly chosen at random.
\end{description}
In the above algorithm, $[y]_+=\Pi_{\mathbb R^p_+}[y]$ denotes  the projection of $y$ on to $\mathbb R^p_+$ for any $y \in \mathbb R^p$.
 Note that the iterations $x^t=x^t(\xi_{[t-1]})$ and $\lambda^t=\lambda^t(\xi_{[t-1]})$ are mappings of the history $\xi_{[t-1]}=(\xi_1,\ldots, \xi_{t-1})$ of the generated random process and hence are random.

Under the assumptions of weak convexity and strict feasibility, the main contributions of this paper can be summarized as follows.
\begin{itemize}
 \item[(a)] By setting the parameters in PMQSopt as $\sigma = T^{-3/4}$, $\alpha = \beta T^{1/4}$ and $\tau=T^{1/2}$, the average squared norm of the gradient of the Moreau envelope of the Lagrangian converges to stationarity with a rate of ${\rm O}(T^{-1/4})$. Furthermore, the constraint violation and complementarity violation both converge to zero at the same rate of ${\rm O}(T^{-1/4})$.
\item[(b)]  By setting the parameters in PMQSopt as $\sigma = T^{-3/4}$, $\alpha = \beta T^{1/4}$
 and $\tau=T^{1/2}$, then  the following high-probability guarantees hold:
$$
\begin{array}{l}
{\rm Pr} \left[\displaystyle \frac{1}{T}
\displaystyle \sum_{t=1}^T\|R_{\alpha/2}(x^{t},\lambda^{t}\|
\leq K_1\left(T^{-1/8}\right)\right] \geq 1-\displaystyle \frac{1}{T^{2/3}};\\[12pt]
{\rm Pr} \left[\displaystyle \frac{1}{T}\displaystyle \sum_{t=1}^Tg_i(x^t) \leq K_2\left(T^{-1/4}\right)
 \right]\geq 1-\displaystyle \frac{2}{T^{2/3}};\\[12pt]
 {\rm Pr} \left[\displaystyle \frac{1}{T}\Big|\displaystyle \sum_{t=1}^T\langle \lambda^t, g(x^t)\rangle\Big|\leq K_3\left(T^{-1/4}\right)\right] \geq 1-\displaystyle \frac{3}{T^{2/3}}.
\end{array}
$$
\item[(c)] When the constraint set is convex, we show that by solving the subproblem in PMQSopt via its dual, the algorithm can be implemented as a practical projection method for the stochastic optimization problem.
\end{itemize}

  The remainder of this paper is structured as follows. Section 2 develops the key properties of PMQSopt that are essential for analyzing Lagrangian gradient violation, constraint violation, and complementarity violation. In Section 3, we establish the sample complexity of PMQSopt in terms of the total number of stochastic gradient evaluations. Section 4 provides high-probability guarantees for the Lagrangian gradient, constraint violation, and complementarity violation. Section 5 discusses the solution of the PMQSopt subproblems and presents numerical results obtained from its implementation. Finally, Section 6 concludes the paper with a summary and a discussion of future directions.


\section{Properties of PMQSopt}\label{Sec2}
\setcounter{equation}{0}

 Let $\Phi$ be the feasible region of Problem (\ref{eq:1}):
$$
\Phi=\left\{x\in X_0: g_i(x) \leq 0, i=1,\ldots,p\right\}.
$$
We assume that expectations
$$
\mathbb{E}[F(x,\xi)]=\displaystyle \int_{\Xi} F(x,\xi)dP(\xi),\, \mathbb{E}[G_i(x,\xi)]=\displaystyle \int_{\Xi} G_i(x,\xi)dP(\xi), i=1,\ldots, p
$$
are well defined and finite valued for every $x\in {\cal O}_0$. Moreover, we assume that the expected value function $f(\cdot)$ and $g_i(\cdot)$ are continuous  on ${\cal O}_0$. Denote $G(x,\xi)=(G_1(x,\xi),\ldots, G_p(x,\xi))^T$ and $g(x)=(g_1(x),\ldots, g_p(x))^T$, then
$$
g(x)=\displaystyle \int_{\Xi} G(x,\xi)dP(\xi).
$$
We make the following assumptions about problem functions, which will be used in somewhere.
\begin{description}
\item[(A1)] It is possible to generate an independent i.i.d. sample $\xi_1,\xi_2,\ldots,$ of realizations of random vector $\xi$.
    \item[(A2)]Let $D_0>0$ such that
        $$ \|x-z\|\leq D_0, \forall x,z \in X_0.
        $$

\item[(A3)] Let  $\nu_g>0$ such that
                $$
         \|G(x,\xi)\| \leq \nu_g, \forall x \in {\cal O}_0, \xi \in \Xi.
        $$
        \item[(A4)]  Let $\kappa_f>0$ and $\kappa_g>0$ such that
                $$
        \|\nabla_x F(x,\xi)\| \leq \kappa_f, \,\, \|\nabla_x G_i(x,\xi)\| \leq \kappa_g,i=1,\ldots, p, \forall (x,\xi)\in {\cal O}_0\times \Xi.
        $$
                \item[(A5)] There exist $\epsilon_0>0$ and $\widehat x \in X_0$ such that
        $$
        g_i(\widehat x) \leq -\epsilon_0, \,\, i =1,\ldots, p.
        $$
         \item[(A6)]There are positive numbers $L_i$ ($i=0,1,\ldots, p$)  such that $F(\cdot,\xi)$ is $L_0$-weakly convex over ${\cal O}_0$ and $G_i(\cdot,\xi)$ is $L_i$-weakly convex over ${\cal O}_0$ for each $\xi \in \Xi$ and $i=0,\ldots, p$.
    \end{description}
 We also need the following assumptions about  parameters in the algorithm.
\begin{description}
\item[(B1)] Assume that $\Sigma^t_0\in \mathbb S^n$ is positively definite.
\item[(B2)] Assume that $q^t_i(x)\leq G_i(x,\xi_t)$ for $i=1,\ldots,p$.
\item[(B3)] Assume that  $\Sigma^t_i$ is negative semidefinite, $\|\Sigma^t_i\|\leq \kappa_{\Sigma}$ for $i=1,\ldots, p$, where $\kappa_{\Sigma}>0$ is some positive number.
   \item[(B4)] Assume that
$
{\cal L}^t_{\sigma }(x,\lambda^t)
$ is a convex function for every $t \in \textbf{N}$.
\end{description}
\begin{remark}\label{remarkB}
If Assumption (A6) is satisfied, then $f$ and $g_i$, $i=1,\ldots,p$ are weakly convex  over ${\cal O}_0$.
Conditions (B1)--(B4) are not restricted conditions when $F$ and $G_i$,$i=1,\ldots, p$ satisfy Assumptions (A2)--(A6). We will show, in Section \ref{Sec3},  how to construct $\Sigma^t_i, i=0,1,\ldots, p$ to satisfy
conditions (B1)--(B4).
\end{remark}
In this section, we develop properties of PMQSopt, which will be used in the analysis for Lagrange gradient, and constraint violation and complementarity violation. The following auxiliary lemma will be used several times in the sequel.
\begin{lemma}\label{lem:opt-x}
Let Assumption (A1) be satisfied. Suppose that  $\Sigma^t_0\in \mathbb S^n$ is positively definite such that Assumption (B4) holds. Then for any $z\in  X_0$, we have
\begin{equation}\label{eq:opt-x-1}
\begin{array}{ll}
\displaystyle\langle \nabla_x F(x^k,\xi_k),x^{k+1}-x^k \rangle  +\displaystyle \frac{1}{2}\langle \Sigma^k_0(x^{k+1}-x^k),x^{k+1}-x^k\rangle+ \frac{1}{2\sigma}\|\lambda^{k+1}\|^2
+ \frac{\alpha}{2} \|x^{k+1}-x^k\|^2  \\[5pt]
\leq \displaystyle\langle \nabla_x F(x^k,\xi_k),z-x^k \rangle +\displaystyle \frac{1}{2}\langle \Sigma^k_0(z-x^k),z-x^k\rangle \\[15pt]
\quad\quad + \displaystyle\frac{1}{2\sigma}\left[\displaystyle \sum_{i=1}^p[\lambda^k_i+\sigma (G_i(x^k,\xi_k)+\langle \nabla_x G_i(x^k,\xi_k), z-x^k \rangle)+\displaystyle \frac{1}{2}\langle \Sigma^k_i(z-x^k),z-x^k\rangle]_+^2\right]\\[15pt]
\quad\quad+ \displaystyle\frac{\alpha}{2}(\|z-x^k\|^2-\|z-x^{k+1}\|^2).
\end{array}
\end{equation}
In particular, if we take $z=x^k$, it yields
\begin{equation}\label{eq:opt-x-2}
\begin{array}{ll}
\displaystyle\langle \nabla_x F(x^k,\xi_k),x^{k+1}-x^k \rangle + \frac{1}{2\sigma}\|\lambda^{k+1}\|^2
+ \alpha \|x^{k+1}-x^k\|^2+\displaystyle \frac{1}{2}\|x^{k+1}-x^k\|^2_{\Sigma^t_0}\\[10pt]
\leq  \displaystyle\frac{1}{2\sigma}\left[ \sum_{i=1}^p[\lambda^k_i+\sigma G_i(x^k,\xi_k)]_+^2\right].
\end{array}
\end{equation}
\end{lemma}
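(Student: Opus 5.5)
The plan is to use strong convexity of the proximal subproblem in Step 2. By (B4), $x\mapsto {\cal L}^k_\sigma(x,\lambda^k)$ is convex, so
\[
\Psi_k(x):={\cal L}^k_\sigma(x,\lambda^k)+\frac{\alpha}{2}\|x-x^k\|^2
\]
is $\alpha$-strongly convex on $X_0$. The point $x^{k+1}$ is its minimizer over the closed convex set $X_0$. The standard three-point inequality for strongly convex minimization therefore gives, for every $z\in X_0$,
\[
\Psi_k(x^{k+1})+\frac{\alpha}{2}\|z-x^{k+1}\|^2\le \Psi_k(z).
\]
To justify it, write the optimality condition $0\in\partial\Psi_k(x^{k+1})+N_{X_0}(x^{k+1})$. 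Then apply the subgradient inequality for the convex part ${\cal L}^k_\sigma(\cdot,\lambda^k)$, together with the identity $\|z-x^k\|^2=\|x^{k+1}-x^k\|^2+2\langle x^{k+1}-x^k,z-x^{k+1}\rangle+\|z-x^{k+1}\|^2$. Dividing out the factor $\frac{\alpha}{2}$ correctly is the only care needed.

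Next we unpack both sides.

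\textbf{Left side.} By the multiplier update, $\lambda^{k+1}_i=[\lambda^k_i+\sigma q^k_i(x^{k+1})]_+$, so
\[
\frac{1}{2\sigma}\sum_i[\lambda^k_i+\sigma q^k_i(x^{k+1})]_+^2=\frac{1}{2\sigma}\|\lambda^{k+1}\|^2 .
\]
Hence
\[
\Psi_k(x^{k+1})=q^k_0(x^{k+1})+\frac{1}{2\sigma}\|\lambda^{k+1}\|^2-\frac{1}{2\sigma}\|\lambda^k\|^2+\frac{\alpha}{2}\|x^{k+1}-x^k\|^2 .
\]

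\textbf{Right side.} Write $\Psi_k(z)$ using the definitions (\ref{eq:qs}) of $q^k_0$ and $q^k_i$.

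The constant terms $F(x^k,\xi_k)$ and $-\frac{1}{2\sigma}\|\lambda^k\|^2$ appear on both sides and cancel. Moving $\frac{\alpha}{2}\|z-x^{k+1}\|^2$ to the right then yields (\ref{eq:opt-x-1}) exactly. This is with the bracket in its natural form $\lambda^k_i+\sigma q^k_i(z)$, which I read as the intended meaning of the displayed bracket, where the quadratic term sits inside the $\sigma$-multiplied expression.

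For (\ref{eq:opt-x-2}), set $z=x^k$. On the right, the linear and quadratic terms in $z-x^k$ vanish and $q^k_i(x^k)=G_i(x^k,\xi_k)$. The proximal term becomes $-\frac{\alpha}{2}\|x^k-x^{k+1}\|^2$; moving it to the left combines with the existing $\frac{\alpha}{2}\|x^{k+1}-x^k\|^2$ to give $\alpha\|x^{k+1}-x^k\|^2$. The quadratic term $\frac12\langle\Sigma^k_0(x^{k+1}-x^k),x^{k+1}-x^k\rangle$ is written as $\frac12\|x^{k+1}-x^k\|^2_{\Sigma^k_0}$, which is a genuine norm since $\Sigma^k_0\succ0$ by (B1).

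The only real obstacle is the three-point inequality. It requires strong convexity of the full subproblem objective, which is exactly where (B4) enters; positive definiteness of $\Sigma^k_0$ alone does not make the $[\cdot]_+^2$ terms convex, since the $\Sigma^k_i$ may be indefinite. Assumption (A1) only serves to make the iterates well-defined measurable functions of the samples, and existence and uniqueness of $x^{k+1}$ follow from strong convexity and closedness of $X_0$.
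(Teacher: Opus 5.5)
Your proof is correct and is essentially the paper's argument. The paper notes that, by strong convexity, $x^{k+1}$ also minimizes $\Psi_k(x)-\frac{\alpha}{2}\|x-x^{k+1}\|^2$ over $X_0$; this is exactly your three-point inequality $\Psi_k(x^{k+1})+\frac{\alpha}{2}\|z-x^{k+1}\|^2\le\Psi_k(z)$, after which both sides are unpacked as you do, and you are right to read the bracket as $\lambda^k_i+\sigma q^k_i(z)$.
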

{\bf Proof}.
Noting that the minimization problem for defining $x^{k+1}$ in (\ref{xna}) is a strongly convex optimization problem and in view of its optimality conditions, we have that $x^{k+1}$ is also the optimal solution to the following problem
\[
\begin{array}{ll}
\min\limits_{x \in  X_0}\quad  q^k_0(x)+  \displaystyle\frac{1}{2\sigma} \displaystyle\sum_{i=1}^p\left[\lambda_i^k+\sigma q^k_i(x)\right]_+^2+\displaystyle \frac{\alpha}{2}(\|x-x^k\|^2-\|x-x^{k+1}\|^2).
\end{array}
\]
Then,
the claimed results are obvious.
\hfill $\Box$

In order to give a bound for $\sum_{t=1}^T G_i(x^t,\xi_t)$, we need to estimate an upper bound of $\|x^{t+1}-x^t\|$, which is given in the following lemma.

\begin{lemma}\label{lem:3}
Suppose  Assumptions (A1)-(A4) and (B1)-(B4) hold.
If $2\alpha-p(\kappa_g+\kappa_{\Sigma}D_0/2)^2\sigma>0$, then
\begin{equation}\label{eq:diffX}
\begin{array}{ll}
\|x^{t+1}-x^t\|
&\leq \displaystyle\frac{2}{2\alpha+\lambda_{\min}\left(\Sigma^t_0\right)-p(\kappa_g+\kappa_{\Sigma}D_0/2)^2\sigma}
\left(\kappa_f+(\kappa_g+\kappa_{\Sigma}D_0/2)\sqrt{p}
[\|\lambda^t\|+\nu_g\sigma]\right).
\end{array}
\end{equation}
If
$\alpha-p(\kappa_g+\kappa_{\Sigma}D_0/2)^2\sigma>0$, then
\begin{equation}\label{eq:diffXa}
\|x^{t+1}-x^t\|\leq
\displaystyle\frac{2}{\alpha+\lambda_{\min}\left(\Sigma^t_0\right)}\left(\kappa_f+(\kappa_g+\kappa_{\Sigma}D_0/2)\sqrt{p}
[\|\lambda^t\|+\nu_g\sigma]\right).
\end{equation}
\end{lemma}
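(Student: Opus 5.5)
We start from inequality (\ref{eq:opt-x-2}) of Lemma \ref{lem:opt-x}, which is the optimality comparison with $z=x^t$. Put $d=x^{t+1}-x^t$. The left side of (\ref{eq:opt-x-2}) contains the terms $\langle \nabla_x F(x^t,\xi_t),d\rangle$, $\frac{1}{2\sigma}\|\lambda^{t+1}\|^2$, $\alpha\|d\|^2$ and $\frac12\|d\|^2_{\Sigma^t_0}$. We bound them below as follows:
\begin{itemize}
\item $\langle \nabla_x F(x^t,\xi_t),d\rangle\ge -\kappa_f\|d\|$ by (A4);
\item $\frac12\|d\|^2_{\Sigma^t_0}\ge \frac12\lambda_{\min}(\Sigma^t_0)\|d\|^2$, using (B1).
\end{itemize}
The right side of (\ref{eq:opt-x-2}) is $\frac{1}{2\sigma}\sum_i[\lambda^t_i+\sigma G_i(x^t,\xi_t)]_+^2$. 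The goal is to offset it against $\frac{1}{2\sigma}\|\lambda^{t+1}\|^2$ rather than bound it crudely.

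Since $\lambda^{t+1}_i=[\lambda^t_i+\sigma q^t_i(x^{t+1})]_+$ and $q^t_i(x^{t+1})=G_i(x^t,\xi_t)+\langle\nabla_x G_i(x^t,\xi_t),d\rangle+\frac12\langle\Sigma^t_i d,d\rangle$, we have
$$|q^t_i(x^{t+1})-G_i(x^t,\xi_t)|\le \|d\|\left(\kappa_g+\tfrac{\kappa_\Sigma}{2}\|d\|\right)\le (\kappa_g+\kappa_\Sigma D_0/2)\|d\|,$$
using (A4), (B3) and $\|d\|\le D_0$ from (A2). Set $c=\kappa_g+\kappa_\Sigma D_0/2$ and $a_i=[\lambda^t_i+\sigma G_i(x^t,\xi_t)]_+$. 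Because the map $[\cdot]_+$ is $1$-Lipschitz, $|\lambda^{t+1}_i-a_i|\le \sigma c\|d\|$. Then
$$a_i^2-(\lambda_i^{t+1})^2\le 2a_i\,\sigma c\|d\|+\sigma^2c^2\|d\|^2 .$$
Summing over $i$ and using Cauchy--Schwarz gives
$$\sum_i a_i\le \sqrt p\,\|a\|\le \sqrt p\,(\|\lambda^t\|+\sigma\nu_g),$$
where the last step uses $\|a\|\le\|\lambda^t+\sigma G\|$ and (A3). Therefore
$$\frac{1}{2\sigma}\bigl(\|a\|^2-\|\lambda^{t+1}\|^2\bigr)\le c\sqrt p(\|\lambda^t\|+\nu_g\sigma)\|d\|+\frac{p c^2\sigma}{2}\|d\|^2 .$$
The bound $\sum_i\sigma^2c^2\|d\|^2=p\sigma^2c^2\|d\|^2$ is exactly what produces the factor $p(\kappa_g+\kappa_\Sigma D_0/2)^2\sigma$ in the denominator.

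Inserting both estimates into (\ref{eq:opt-x-2}) yields
$$\Bigl(\alpha+\tfrac12\lambda_{\min}(\Sigma^t_0)-\tfrac{pc^2\sigma}{2}\Bigr)\|d\|^2\le \bigl(\kappa_f+c\sqrt p(\|\lambda^t\|+\nu_g\sigma)\bigr)\|d\|.$$
Multiplying by $2$ and dividing by $\|d\|$ (the case $d=0$ is trivial) gives (\ref{eq:diffX}). The hypothesis $2\alpha-pc^2\sigma>0$, together with $\lambda_{\min}(\Sigma^t_0)>0$, ensures that the coefficient is positive.

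For (\ref{eq:diffXa}), assume $\alpha-pc^2\sigma>0$. Then
$$2\alpha+\lambda_{\min}(\Sigma^t_0)-pc^2\sigma>\alpha+\lambda_{\min}(\Sigma^t_0),$$
so the denominator in (\ref{eq:diffX}) can be replaced by $\alpha+\lambda_{\min}(\Sigma^t_0)$, and (\ref{eq:diffXa}) follows immediately.

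The main obstacle is the middle step: the difference of squared positive parts has to be handled via the Lipschitz property of $[\cdot]_+$ and not by bounding each term separately. We also need $\|d\|\le D_0$ to control the quadratic $\Sigma^t_i$ term linearly, which relies on $x^t,x^{t+1}\in X_0$. If $x^1\notin X_0$ this fails at $t=1$, so we assume $x^1\in X_0$, as the constant $D_0$ in the statement implicitly does.
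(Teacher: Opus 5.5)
Your proof is correct and follows the paper's own argument essentially step for step. You start from (\ref{eq:opt-x-2}), bound the difference of squared positive parts using the $1$-Lipschitz property of $[\cdot]_+$ together with $|q^t_i(x^{t+1})-G_i(x^t,\xi_t)|\le(\kappa_g+\kappa_{\Sigma}D_0/2)\|x^{t+1}-x^t\|$, and absorb the $p(\kappa_g+\kappa_{\Sigma}D_0/2)^2\sigma$ remainder into the left-hand side. You also write out two things the paper leaves implicit: the derivation of (\ref{eq:diffXa}) from (\ref{eq:diffX}), and the fact that $x^1\in X_0$ is needed for $\|x^{t+1}-x^t\|\le D_0$ at $t=1$.
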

{\bf Proof}. Since $x^{t+1}$ is a solution to Problem (\ref{xna}), we have from (\ref{eq:opt-x-2}) that
$$
\begin{array}{l}
\langle \nabla_x F(x^t,\xi_t), x^{t+1}-x^t\rangle +\displaystyle \frac{1}{2} \left \langle \Sigma^t_0(x^{t+1}-x^t),x^{t+1}-x^t\right \rangle+\displaystyle \frac{1}{2\sigma}\|\lambda^{t+1}\|^2 +\alpha\|x^{t+1}-x^t\|^2\\[10pt]
 \leq \displaystyle \frac{1}{2\sigma}\|[\lambda^{t} +\sigma q^t(x^t)]_+\|^2
 \leq \displaystyle \frac{1}{2\sigma}\|[\lambda^{t} +\sigma G(x^t,\xi_t)]_+\|^2,
\end{array}
$$
 we have
\[
\left (\alpha+\displaystyle \frac{1}{2}\lambda_{\min}\left(\Sigma^t_0\right)\right)\|x^{t+1}-x^t\|^2\leq  \kappa_f\|x^{t+1}-x^t\|+
\frac{1}{2\sigma}\sum_{i=1}^p\left([a_i]_+^2-[b_i]_+^2\right),
\]
in which, for simplicity, we use
\[
a_i:=\lambda_i^t+\sigma G_i(x^t,\xi_t),\quad b_i:=\lambda_i^t+\sigma (G_i(x^t,\xi_t)+\langle \nabla_x G_i(x^t,\xi_t),(x^{t+1}-x^t)\rangle+\displaystyle \frac{1}{2}\langle \Sigma^t_i(x^{t+1}-x^t),x^{t+1}-x^t\rangle).
\]
Noticing that
\[
\begin{array}{ll}
[a_i]_+^2-[b_i]_+^2&=([a_i]_++[b_i]_+)([a_i]_+-[b_i]_+)\leq (|a_i|+|b_i|)\cdot|a_i-b_i|\\[8pt]
&\leq (2|a_i|+|b_i-a_i|)\cdot|a_i-b_i|=2|a_i|\cdot|a_i-b_i|+(a_i-b_i)^2\\[8pt]
&\leq 2|\lambda_i^t+\sigma G_i(x^t,\xi_t)|\cdot\sigma\left(\kappa_g\|x^{t+1}-x^t\|+\displaystyle \frac{1}{2}\|\Sigma^t_i\|
\|x^{t+1}-x^t\|^2\right)\\[8pt]
&\quad +\sigma^2\left(\kappa_g\|x^{t+1}-x^t\|+\displaystyle \frac{1}{2}\|\Sigma^t_i\|
\|x^{t+1}-x^t\|^2\right)^2,
\end{array}
\]
we obtain
\[
\left(2\alpha+\lambda_{\min}\left(\Sigma^t_0\right)\right)\|x^{t+1}-x^t\|\leq 2\kappa_f+\sum_{i=1}^p\left[(2\kappa_g+\kappa_{\Sigma}D_0)|\lambda_i^t+\sigma G_i(x^t,\xi_t)|+\sigma(\kappa_g+\kappa_{\Sigma}D_0/2)^2\|x^{t+1}-x^t\|\right].
\]
If $2\alpha-p(\kappa_g+\kappa_{\Sigma}D_0/2)^2\sigma>0$, it yields
\[
\begin{array}{ll}
\|x^{t+1}-x^t\|&\leq \displaystyle \frac{2}{2\alpha+\lambda_{\min}\left(\Sigma^t_0\right)-p(\kappa_g+\kappa_{\Sigma}D_0/2)^2\sigma}
\left(\kappa_f+(\kappa_g+\kappa_{\Sigma}D_0/2)\sum_{i=1}^p|\lambda_i^t+\sigma G_i(x^t,\xi_t)|\right) \\[12pt]
&\leq \displaystyle\frac{2}{2\alpha+\lambda_{\min}\left(\Sigma^t_0\right)-p(\kappa_g+\kappa_{\Sigma}D_0/2)^2\sigma}
\left(\kappa_f+(\kappa_g+\kappa_{\Sigma}D_0/2)\sqrt{p}
[\|\lambda^t\|+\nu_g\sigma]\right),
\end{array}
\]
namely (\ref{eq:diffX}) is satisfied,
where the last inequality is obtained from the facts that $\sum_{i=1}^p|\lambda_i^t|\leq\sqrt{p}\|\lambda^t\|$ and $\sum_{i=1}^p|G_i(x^t,\xi_t)|\leq\sqrt{p}\|G(x^t,\xi_t)\|\leq\sqrt{p}\nu_g$.
\hfill $\Box$

\begin{lemma}\label{lem:1}
Let $(x^t,\lambda^t)$ be generated by PMQSopt,  Assumptions (A1) -- (A4), (B2) and  (B3) be satisfied. Then
\begin{equation}\label{eq:lambda2}
\|\lambda^{t+1}\|^2 \leq \|\lambda^t\|^2 +2 \sigma \langle \lambda^t, G(x^{t+1}, \xi_t)\rangle+\sigma^2 \nu_g^2,
\end{equation}
and
\begin{equation}\label{eq:2lambda}
\|\lambda^t\|-\gamma_1\sigma \leq \|\lambda^{t+1}\| \leq \|\lambda^t\|+ \gamma_1\sigma.
\end{equation}
where
$$
\gamma_1=\nu_g+\sqrt{p} \left(\kappa_gD_0+\displaystyle \frac{1}{2}\kappa_{\Sigma}D_0^2\right).
$$
Moreover, one has
\begin{equation}\label{eq:lambdainorm}
|\lambda^{t+1}_i-\lambda^t_i| \leq \sigma \left[\nu_g+\left(\kappa_g+\displaystyle \frac{\kappa_{\Sigma}D_0}{2}\right)\|x^{t+1}-x^t\|\right]\leq
\gamma_2\sigma,
\end{equation}
where
$$
\gamma_2=\left[\nu_g+\left(\kappa_gD_0+\displaystyle \frac{\kappa_{\Sigma}D_0^2}{2}\right)\right].
$$
\end{lemma}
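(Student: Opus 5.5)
The plan is to work directly from the multiplier update $\lambda^{t+1}=[\lambda^t+\sigma q^t(x^{t+1})]_+$ in (\ref{xna}). Three facts will be used throughout. First, $\lambda^t\ge 0$, so $\lambda^t=[\lambda^t]_+$. Second, the map $s\mapsto[s]_+$ on $\mathbb R$ is nondecreasing and $1$-Lipschitz, and the projection onto $\mathbb R^p_+$ is nonexpansive. Third, both $x^t$ and $x^{t+1}$ lie in $X_0$, so $\|x^{t+1}-x^t\|\le D_0$ by (A2); this holds because every iterate is produced by minimization over $X_0$, and we take $x^1\in X_0$.

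\textbf{Step 1: proof of (\ref{eq:lambda2}).} By (B2) applied at $x^{t+1}$, $q^t_i(x^{t+1})\le G_i(x^{t+1},\xi_t)$ for each $i$. Since $[\cdot]_+$ is nondecreasing and nonnegative, its square is also nondecreasing, so
\[
[\lambda_i^t+\sigma q^t_i(x^{t+1})]_+^2\le[\lambda_i^t+\sigma G_i(x^{t+1},\xi_t)]_+^2\le(\lambda_i^t+\sigma G_i(x^{t+1},\xi_t))^2 .
\]
Summing over $i$ and expanding the square gives $\|\lambda^t\|^2+2\sigma\langle\lambda^t,G(x^{t+1},\xi_t)\rangle+\sigma^2\|G(x^{t+1},\xi_t)\|^2$. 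Applying (A3) to the last term yields (\ref{eq:lambda2}).

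\textbf{Step 2: proof of (\ref{eq:lambdainorm}).} Write $d=x^{t+1}-x^t$. Using $\lambda_i^t=[\lambda_i^t]_+$ and the $1$-Lipschitz property of $[\cdot]_+$,
\[
|\lambda_i^{t+1}-\lambda_i^t|\le\sigma|q^t_i(x^{t+1})|\le\sigma\Big(|G_i(x^t,\xi_t)|+\kappa_g\|d\|+\tfrac12\kappa_\Sigma\|d\|^2\Big).
\]
Here the first term inside the bracket comes from the definition (\ref{eq:qs}), the second from (A4), and the third from (B3). Bound $|G_i|\le\nu_g$ by (A3), and bound one factor of $\|d\|^2$ by $D_0$. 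This gives the middle expression of (\ref{eq:lambdainorm}). Bounding $\|d\|\le D_0$ once more gives the constant $\gamma_2$.

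\textbf{Step 3: proof of (\ref{eq:2lambda}).} Nonexpansiveness of the projection onto $\mathbb R^p_+$ gives $\|\lambda^{t+1}-\lambda^t\|\le\sigma\|q^t(x^{t+1})\|$. By the triangle inequality,
\[
\|q^t(x^{t+1})\|\le\|G(x^t,\xi_t)\|+\big\|\big(\langle\nabla_xG_i(x^t,\xi_t),d\rangle+\tfrac12\langle\Sigma^t_id,d\rangle\big)_{i=1}^p\big\|\le\nu_g+\sqrt p\Big(\kappa_gD_0+\tfrac12\kappa_\Sigma D_0^2\Big)=\gamma_1 .
\]
The two-sided bound then follows from the reverse triangle inequality $\big|\|\lambda^{t+1}\|-\|\lambda^t\|\big|\le\|\lambda^{t+1}-\lambda^t\|$.

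None of these steps is a real obstacle. The only point requiring care is Step 1, where the bound must be transferred from $q^t$ to $G(x^{t+1},\xi_t)$. This works only because of the monotonicity of $s\mapsto[s]_+^2$ together with (B2); a plain nonexpansiveness argument would leave $q^t$ in place of $G$. A second, minor point is making sure that $x^t\in X_0$ for every $t$, so that $D_0$ can be used.
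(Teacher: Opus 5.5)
Your proposal is correct and follows essentially the same route as the paper. For (\ref{eq:lambda2}) you use (B2), monotonicity of $[\cdot]_+^2$, and $[a]_+^2\le a^2$; for (\ref{eq:2lambda}) you use nonexpansiveness of $\Pi_{\mathbb R^p_+}$, the triangle inequality, and the reverse triangle inequality; for (\ref{eq:lambdainorm}) you use the componentwise $1$-Lipschitz bound with (A3), (A4), (B3), which the paper only asserts. You also rightly flag that $x^1\in X_0$ is needed for $\|x^{t+1}-x^t\|\le D_0$, since Step 0 of the algorithm only says $x^1\in\mathbb R^n$ and the paper leaves this assumption implicit.
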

{\bf Proof}. Noting that for any $a \in \mathbb R$, $[a]_+^2 \leq a^2$, we have from Assumptions (B2) and (A3) that
$$
\begin{array}{ll}
\|\lambda^{t+1}\|^2 &=\displaystyle \sum_{i=1}^p [\lambda_i^t+\sigma q^t_i(x^{t+1})]_+^2 \leq
\displaystyle \sum_{i=1}^p [\lambda^t_i+\sigma G_i(x^{t+1},\xi_t)]_+^2\leq \displaystyle \sum_{i=1}^p [\lambda^t_i+\sigma G_i(x^{t+1},\xi_t)]^2\\[6pt]
&= \displaystyle \sum_{i=1}^p\left( [\lambda^t_i]^2++2 \sigma \langle \lambda^t, G(x^{t+1}, \xi_t)\rangle+\sigma^2  G_i(x^{t+1},\xi_t)^2\right)\leq \|\lambda^t\|^2 +2 \sigma \langle \lambda^t, G(x^{t+1}, \xi_t)\rangle+\sigma^2 \nu_g^2.
\end{array}
$$
It follows from the nonexpansion property of the projection $\Pi_{\mathbb R^p_+}(\cdot)$, we have from Assumptions (A2) -- (A4) and (B3)  that
$$
\begin{array}{ll}
\|\lambda^{t+1}-\lambda^t\| & =\|[\lambda^t+\sigma q^t(x^{t+1})]_+-[\lambda^t]_+\|\\[6pt]
&  \leq \sigma\|G(x^t,\xi_t)\|+\sigma \left (\displaystyle \sum_{i=1}^p \left(\|\nabla_x G_i(x^t,\xi_t)\|\|x^{t+1}-x^t\|+\displaystyle \frac{1}{2}\|\Sigma^t_i\|\|x^{t+1}-x^t\|^2\right)^2\right)^{1/2}\\[8pt]
& \leq \sigma\nu_g+\sigma \left (\displaystyle \sum_{i=1}^p \left(\kappa_gD_0+\displaystyle \frac{1}{2}\kappa_{\Sigma}D_0^2\right)^2\right)^{1/2} \leq \sigma\left[\nu_g+\sqrt{p} \left(\kappa_gD_0+\displaystyle \frac{1}{2}\kappa_{\Sigma}D_0^2\right)\right],
\end{array}
$$
which implies (\ref{eq:2lambda}) from the definition of $\gamma_1$. Inequality
(\ref{eq:lambdainorm}) can be obtained from the definition of $\lambda^{t+1}_i$ and Assumptions (A3), (A4) and  (B3).
The proof is completed. \hfill $\Box$\\

The following lemma is just Lemma 6 of \cite{Zhang2023}, which will be used to establishing the boundedness of $\mathbb E\|\lambda^t\|$.
\begin{lemma}\label{lem:4}
Let $(x^t,\lambda^t)$ be generated by PMQSopt and Assumptions (A1) and  (A5) be satisfied. Then for any $t_2 \leq t_1-1$ where $t_1$ and $t_2$ are positive integers,
\begin{equation}\label{eq:slaterCondlamd}
\displaystyle \mathbb{E}\left[\langle \lambda^{t_1},G(\widehat x, \xi_{t_1}) \rangle \,|\, \xi_{[t_2]}\right]
\leq -\epsilon_0 \mathbb{E}\left[\|\lambda^{t_1}\| \,|\, \xi_{[t_2]}\right].
\end{equation}
\end{lemma}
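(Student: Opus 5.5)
The argument uses two facts: $\lambda^{t_1}$ is determined by $\xi_{[t_1-1]}$, while $\xi_{t_1}$ is independent of that history. We first condition on the larger $\sigma$-field generated by $\xi_{[t_1-1]}$ and then step back to $\xi_{[t_2]}$ with the tower property. The condition $t_2\le t_1-1$ is exactly what makes $\xi_{[t_2]}$ a sub-history of $\xi_{[t_1-1]}$, so the tower step is legitimate.

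\textbf{Step 1 (inner conditional expectation).} By the remark after PMQSopt, $\lambda^{t_1}=\lambda^{t_1}(\xi_{[t_1-1]})$ is measurable with respect to the history $\xi_{[t_1-1]}$. By Assumption (A1), $\xi_{t_1}$ is independent of $\xi_{[t_1-1]}$. Since $\widehat x$ is a deterministic point, we get
$$
\mathbb{E}\left[\langle \lambda^{t_1},G(\widehat x,\xi_{t_1})\rangle \,|\, \xi_{[t_1-1]}\right]
=\Big\langle \lambda^{t_1},\mathbb{E}[G(\widehat x,\xi)]\Big\rangle=\langle \lambda^{t_1}, g(\widehat x)\rangle .
$$
Integrability holds because $\|G(\widehat x,\xi)\|\le \nu_g$ under (A3). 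Without (A3) one would need to assume integrability; note that $\|\lambda^{t_1}\|$ is finite for each realization anyway.

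\textbf{Step 2 (Slater bound).} The update $\lambda^{t+1}_i=[\cdot]_+$ gives $\lambda^{t_1}\ge 0$ componentwise; this also holds at $t_1=1$ since $\lambda^1=0$. Assumption (A5) gives $g_i(\widehat x)\le -\epsilon_0$ for every $i$. Therefore
$$
\langle \lambda^{t_1},g(\widehat x)\rangle\le -\epsilon_0\sum_{i=1}^p\lambda^{t_1}_i=-\epsilon_0\|\lambda^{t_1}\|_1\le -\epsilon_0\|\lambda^{t_1}\|,
$$
where the last inequality uses $\|\cdot\|_1\ge\|\cdot\|$.

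\textbf{Step 3 (tower property).} Since $t_2\le t_1-1$, the history $\xi_{[t_2]}$ is contained in $\xi_{[t_1-1]}$. Taking $\mathbb{E}[\cdot\,|\,\xi_{[t_2]}]$ of the inequality from Steps 1–2 and using monotonicity of conditional expectation yields (\ref{eq:slaterCondlamd}).

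The only delicate point is justifying Step 1, namely the measurability of $\lambda^{t_1}$ and the independence of $\xi_{t_1}$ from $\xi_{[t_1-1]}$. Both follow directly from the algorithm's construction and from (A1), so no serious obstacle is expected.
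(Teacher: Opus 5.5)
Your proof is correct and takes essentially the same route as the argument the paper relies on; the paper does not prove this lemma but imports it as Lemma 6 of \cite{Zhang2023}, whose proof runs the same way: condition on $\xi_{[t_1-1]}$, use that $\lambda^{t_1}\geq 0$ is determined by $\xi_{[t_1-1]}$ while $\xi_{t_1}$ is independent of it, apply $g_i(\widehat x)\leq-\epsilon_0$ together with $\|\lambda^{t_1}\|_1\geq\|\lambda^{t_1}\|$, and finish with the tower property using $t_2\leq t_1-1$. Your integrability caveat is only a technicality, since wherever the lemma is applied (A3) holds and $\|\lambda^{t_1}\|\leq (t_1-1)\sigma\gamma_1$, so all the conditional expectations involved are well defined.
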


The following lemma is similar to Lemma 7 of \cite{Zhang2023}, however the assumptions here are different from those in  \cite{Zhang2023}.
\begin{lemma}\label{lem:l5}
 Let $s > 0$ be an arbitrary integer. Let  Assumptions (A1) -- (A4) and (B1)-- (B4) be satisfied.
  Let
  \begin{equation}\label{eq:Sigma0c}
  \Sigma^t_0=-\displaystyle\sum_{i=1}^p \lambda^t_i \Sigma^t_i+\tau I
  \end{equation}
  for some $\tau >0$.
    Assume $\epsilon_0>0$ and  $\kappa_\Sigma>0$ satisfy
  \begin{equation}\label{extrCon}
  \sqrt{p}\kappa_\Sigma D_0^2\leq \epsilon_0.
  \end{equation}
  At each round  $t \in \{1,2,\ldots\}$ in  PMQSopt. For  any $\alpha > 2\kappa_{\Sigma}$ and
 \begin{equation}\label{eq:theta9}
 \vartheta (\sigma,\alpha,\tau, s)=\displaystyle \frac{\epsilon_0\sigma s}{4}+\gamma_1\sigma(s-1)+\displaystyle \frac{2(\alpha+\tau) D_0^2}{\epsilon_0s}+\displaystyle \frac{4\kappa_fD_0}{\epsilon_0}+
 \displaystyle \frac{2\sigma \nu_g^2}{\epsilon_0},
 \end{equation}
 the following  holds
\begin{equation}\label{eq:6}
|\|\lambda^{t+1}\|-\|\lambda^t\||\leq \sigma \gamma_1
\end{equation}
and
\begin{equation}\label{eq:7}
\mathbb{E}\left [ \|\lambda^{t+s}\|-\|\lambda^t\| \,|\, \xi_{[t-1]}\right]
\leq \left
\{
\begin{array}{ll}
s \sigma \gamma_1 & \mbox{if } \|\lambda^t\| < \vartheta (\sigma,\alpha,\tau, s),\\[6pt]
-s \displaystyle \frac{\sigma \epsilon_0}{4} & \mbox{if } \|\lambda^t\| \geq  \vartheta (\sigma,\alpha,\tau,s).
\end{array}
\right.
\end{equation}
\end{lemma}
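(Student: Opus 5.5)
Inequality (\ref{eq:6}) is just (\ref{eq:2lambda}) of Lemma \ref{lem:1}. It immediately gives the first case of (\ref{eq:7}): telescoping over $s$ steps yields $\|\lambda^{t+s}\|-\|\lambda^t\|\le s\sigma\gamma_1$ almost surely. So the work is the drift case $\|\lambda^t\|\ge\vartheta$, and I would follow the structure of Lemma 7 of \cite{Zhang2023}. The step from $\lambda^{t}$ to $\lambda^{t+1}$ is handled in Steps 1–3 below; the $s$-step bound is assembled in Step 4.

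\textbf{Step 1 (one-step comparison).} For each $k\in\{t,\dots,t+s-1\}$, apply (\ref{eq:opt-x-1}) of Lemma \ref{lem:opt-x} with $z=\widehat x$ from (A5). On the left I bound $\langle\nabla_xF(x^k,\xi_k),x^{k+1}-x^k\rangle\ge-\kappa_fD_0$ and drop the nonnegative terms, using (B1) for $\frac12\|x^{k+1}-x^k\|^2_{\Sigma^k_0}$. On the right I bound the linear term by $\kappa_fD_0$. For the quadratic term, $\Sigma^k_0=-\sum_i\lambda^k_i\Sigma^k_i+\tau I$ together with (B3) gives
\[
\tfrac12\langle\Sigma^k_0(\widehat x-x^k),\widehat x-x^k\rangle\le \tfrac12(\tau+\kappa_\Sigma\sqrt p\|\lambda^k\|)D_0^2 .
\]
For the penalty term, $[\cdot]_+^2\le(\cdot)^2$ and expanding the square give
\[
\|\lambda^k\|^2+2\sigma\langle\lambda^k,\ell^k(\widehat x)\rangle+\sigma^2\|\ell^k(\widehat x)\|^2,
\]
where $\ell^k_i(\widehat x)=G_i(x^k,\xi_k)+\langle\nabla_xG_i(x^k,\xi_k),\widehat x-x^k\rangle+\frac12\langle\Sigma^k_i(\widehat x-x^k),\widehat x-x^k\rangle$ is the quadratic model $q^k_i$ evaluated at $\widehat x$.

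\textbf{Step 2 (connecting the model to $G(\widehat x,\xi_k)$).} By (B2), $q^k_i(\widehat x)\le G_i(\widehat x,\xi_k)$ with $\lambda^k\ge0$, so $\langle\lambda^k,\ell^k(\widehat x)\rangle\le\langle\lambda^k,G(\widehat x,\xi_k)\rangle$. The $\sigma^2$ term is controlled through $\nu_g$ and the boundedness constants. Collecting terms, this yields for each $k$
\[
\|\lambda^{k+1}\|^2\le\|\lambda^k\|^2+2\sigma\langle\lambda^k,G(\widehat x,\xi_k)\rangle+\sigma\kappa_\Sigma\sqrt pD_0^2\|\lambda^k\|+4\sigma\kappa_fD_0+\sigma(\alpha+\tau)\big(\|\widehat x-x^k\|^2-\|\widehat x-x^{k+1}\|^2\big)+2\sigma^2\nu_g^2 .
\]
The factor $\alpha+\tau$ comes from bundling the $\tau D_0^2$ term with the $\alpha$ telescoping term.

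\textbf{Step 3 (expectation and Slater).} Lemma \ref{lem:4} (with $t_2=k-1\ge t-1$, so the tower property allows conditioning on $\xi_{[t-1]}$) gives
\[
\mathbb E[\langle\lambda^k,G(\widehat x,\xi_k)\rangle\mid\xi_{[t-1]}]\le-\epsilon_0\mathbb E[\|\lambda^k\|\mid\xi_{[t-1]}].
\]
Condition (\ref{extrCon}) gives $\kappa_\Sigma\sqrt pD_0^2\|\lambda^k\|\le\epsilon_0\|\lambda^k\|$, so this term eats only half of the Slater decrease $-2\sigma\epsilon_0\|\lambda^k\|$. This is the key place where the extra assumption enters, and I expect it to be the main obstacle: the bookkeeping of constants must leave exactly a net $-\sigma\epsilon_0\|\lambda^k\|$ (or better). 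I would also check that the hypothesis $\alpha>2\kappa_\Sigma$ is what absorbs any leftover $\kappa_\Sigma$ terms.

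\textbf{Step 4 (summing over $s$ steps).} Sum $k=t,\dots,t+s-1$ and telescope the $\|\widehat x-x^k\|^2$ terms, which leaves at most $(\alpha+\tau)D_0^2$. Use (\ref{eq:6}) in the form $\|\lambda^k\|\ge\|\lambda^t\|-(k-t)\sigma\gamma_1$ to lower bound $\sum_k\|\lambda^k\|\ge s\|\lambda^t\|-\frac{s(s-1)}2\sigma\gamma_1$. This gives
\[
\mathbb E[\|\lambda^{t+s}\|^2\mid\xi_{[t-1]}]\le\|\lambda^t\|^2-\sigma\epsilon_0 s\|\lambda^t\|+\sigma\epsilon_0\tfrac{s(s-1)}2\sigma\gamma_1+2\sigma(\alpha+\tau)D_0^2+4s\sigma\kappa_fD_0+2s\sigma^2\nu_g^2 .
\]
When $\|\lambda^t\|\ge\vartheta$, the definition (\ref{eq:theta9}) is designed so that the right side is at most $(\|\lambda^t\|-s\sigma\epsilon_0/4)^2$. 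One expands and checks term by term: the $\frac{2(\alpha+\tau)D_0^2}{\epsilon_0 s}$, $\frac{4\kappa_fD_0}{\epsilon_0}$, $\frac{2\sigma\nu_g^2}{\epsilon_0}$ and $\gamma_1\sigma(s-1)$ pieces of $\vartheta$ cover the corresponding positive terms, and the $\frac{\epsilon_0\sigma s}4$ piece covers the square $(s\sigma\epsilon_0/4)^2$. Also $\|\lambda^t\|-s\sigma\epsilon_0/4\ge0$ because $\vartheta\ge\epsilon_0\sigma s/4$. Jensen's inequality $\mathbb E\|\lambda^{t+s}\|\le(\mathbb E\|\lambda^{t+s}\|^2)^{1/2}$ then yields the second case of (\ref{eq:7}).
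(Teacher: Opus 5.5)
\paragraph{The gap: the $\tau I$ part of $\Sigma^k_0$ does not telescope.} Inequality (\ref{eq:opt-x-1}) carries only the proximal telescoping term $\frac{\alpha}{2}(\|\widehat x-x^k\|^2-\|\widehat x-x^{k+1}\|^2)$. You then bound $\frac12\langle\Sigma^k_0(\widehat x-x^k),\widehat x-x^k\rangle$ including its piece $\frac{\tau}{2}\|\widehat x-x^k\|^2\le\frac{\tau}{2}D_0^2$. That piece is a per-step constant. It cannot be ``bundled'' into $\frac{\alpha+\tau}{2}(\|\widehat x-x^k\|^2-\|\widehat x-x^{k+1}\|^2)$, because nothing in your inequality supplies the matching $-\frac{\tau}{2}\|\widehat x-x^{k+1}\|^2$; the term $\frac{\tau}{2}\|x^{k+1}-x^k\|^2$ on the left, which you drop anyway, does not provide it. Summed over $s$ rounds and multiplied by $2\sigma$, it contributes $s\sigma\tau D_0^2$. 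The threshold your argument actually proves therefore contains $2\tau D_0^2/\epsilon_0$ instead of $2\tau D_0^2/(\epsilon_0 s)$. That is not the stated $\vartheta$, and under the later choice $\tau=s=T^{1/2}$ it would grow like $T^{1/2}$.

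The paper avoids this by writing $\Sigma^l_0=\widetilde\Sigma^l_0+\tau I$ with $\widetilde\Sigma^l_0=-\sum_i\lambda^l_i\Sigma^l_i$. It then uses that the subproblem objective is strongly convex with modulus $\alpha+\tau$. The three-point inequality at $z=\widehat x$ then directly gives $\frac{\alpha+\tau}{2}(\|\widehat x-x^l\|^2-\|\widehat x-x^{l+1}\|^2)$, and only $\widetilde\Sigma^l_0$ is bounded crudely, by $\frac12\sqrt p\kappa_\Sigma D_0^2\|\lambda^l\|$. This strong convexity needs more than the plain convexity in (B4); the paper draws it from the form of $\Sigma^l_0$.

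\paragraph{Your constants also exceed what $\vartheta$ absorbs.} Even granting the telescoping, three of your constants are twice what $\vartheta$ can handle, so the closing term-by-term check fails.
\begin{itemize}
\item Bounding the two gradient terms separately costs $2\kappa_fD_0$. The paper combines them as $\langle\nabla_xF(x^l,\xi_l),\widehat x-x^{l+1}\rangle\le\kappa_fD_0$.
\item In the penalty you drop $[\cdot]_+$ before invoking (B2). This leaves $\sigma^2\|q^k(\widehat x)\|^2$, which $\nu_g$ does not control: $q^k_i(\widehat x)\le G_i(\widehat x,\xi_k)$ may lie far below $-\nu_g$, and only $|q^k_i(\widehat x)|\le\gamma_2$ is available. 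The paper applies (B2) first through monotonicity, $[\lambda^l_i+\sigma q^l_i(\widehat x)]_+\le[\lambda^l_i+\sigma G_i(\widehat x,\xi_l)]_+$, and only then uses $[a]_+^2\le a^2$. The remainder is then $\sigma^2\|G(\widehat x,\xi_l)\|^2\le\sigma^2\nu_g^2$.
\end{itemize}
With the paper's constants, the $s$-step bound is $\|\lambda^t\|^2-\epsilon_0\sigma s\|\lambda^t\|+C$, where $C=(\alpha+\tau)\sigma D_0^2+2s\sigma\kappa_fD_0+s\sigma^2\nu_g^2+\frac12\epsilon_0\sigma^2\gamma_1s(s-1)$. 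One has $\frac{\epsilon_0\sigma s}{2}\vartheta=\frac{\epsilon_0^2\sigma^2s^2}{8}+C$ exactly, and this is what yields the bound $(\|\lambda^t\|-\frac{\epsilon_0\sigma s}{4})^2$.

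With the constants in your Step 4 display ($2\sigma(\alpha+\tau)D_0^2$, $4s\sigma\kappa_fD_0$, $2s\sigma^2\nu_g^2$), you would additionally need $\frac{3}{16}\epsilon_0^2\sigma^2s^2\ge\sigma(\alpha+\tau)D_0^2+2s\sigma\kappa_fD_0+s\sigma^2\nu_g^2$. This fails in general, for instance for $\sigma=T^{-3/4}$, $s=\tau=T^{1/2}$.

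\paragraph{What matches the paper.} The remaining ingredients coincide with the paper's proof:
\begin{itemize}
\item the Slater step via Lemma~\ref{lem:4};
\item halving its effect using (\ref{extrCon});
\item the lower bound $\sum_k\|\lambda^k\|\ge s\|\lambda^t\|-\frac{s(s-1)}{2}\sigma\gamma_1$;
\item the final Jensen step.
\end{itemize}
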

{\bf Proof}. Inequality (\ref{eq:6}) follows from Lemma \ref{lem:1}. We only need to establish (\ref{eq:7}).
Since it is obvious that
$$
 \mathbb{E}\left[\|\lambda^{t+s}\|-\|\lambda^t\|\,|\,\xi_{[t-1]}\right]\leq s \sigma \gamma_1
$$
 when $\|\lambda^t\|< \vartheta (\sigma,\alpha,s)$, it remains to prove
$$ \mathbb{E}\left [ \|\lambda^{t+s}\|-\|\lambda^t\| \,|\, \xi_{[t-1]}\right]
\leq -s \displaystyle \frac{\sigma \epsilon_0}{4}
$$
when $\|\lambda^t\| \geq  \vartheta (\sigma,\alpha,\tau, s)$.

For given positive integer $s$, suppose $\|\lambda^t\| \geq  \vartheta (\sigma,\alpha,\tau, s)$. For any $l \in \{t,t+1,\ldots, t+s-1\}$, since from the choice of $\Sigma^t_0$, Assumption (B4) holds, and ${\cal L}^l_{\sigma }(x,\lambda^l) +\displaystyle \frac{\alpha}{2}\|x-x^l\|^2$ is strongly convex with modulus $\displaystyle \frac{\alpha+\tau}{2}$, one has from Assumption (B2) for
$\widetilde \Sigma^t_0=\Sigma^t_0-\tau I$, that
$$
\begin{array}{l}
\langle \nabla_x F(x^l,\xi_l),x^{l+1}-x^l \rangle+\displaystyle \frac{1}{2}\left \langle \widetilde \Sigma^l_0(x^{l+1}-x^l), x^{l+1}-x^l \right \rangle+\displaystyle \frac{1}{2\sigma}\|\lambda^{l+1}\|^2+\displaystyle \frac{\alpha}{2}\|x^{l+1}-x^l\|^2\\[10pt]
\quad \,\leq \langle \nabla_x F(x^l,\xi_l),\widehat x-x^l \rangle+\displaystyle \frac{1}{2}\left \langle \widetilde \Sigma^l_0(\widehat x-x^l), \widehat x-x^l \right \rangle+\displaystyle \frac{1}{2\sigma}\|[\lambda^{l}+\sigma q^l(\widehat x)]_{+}\|^2\\[10pt]
\quad \quad \quad +\displaystyle \frac{\alpha+\tau}{2}\left[\|\widehat x-x^l\|^2-\|\widehat x-x^{l+1}\|^2\right]\\[10pt]
\quad \,\leq \langle \nabla_x F(x^l,\xi_l),\widehat x-x^l \rangle+\displaystyle \frac{1}{2}\left \langle \widetilde \Sigma^l_0(\widehat x-x^l), \widehat x-x^l \right \rangle+\displaystyle \frac{1}{2\sigma}\|[\lambda^{l}+\sigma G(\widehat x,\xi_l)]_{+}\|^2\\[10pt]
\quad \quad \quad +\displaystyle \frac{\alpha+\tau}{2}\left[\|\widehat x-x^l\|^2-\|\widehat x-x^{l+1}\|^2\right].
\end{array}
$$
Using Assumption (A3) and the following inequality
$$
\|[\lambda^{l}+\sigma G(\widehat x,\xi_l)]_{+}\|^2 \leq \|\lambda^l\|^2+2\sigma \langle \lambda^l,G(\widehat x,\xi_l)\rangle
+\sigma^2\|G(\widehat x,\xi_l)\|^2,
$$
we obtain from Assumptions (A2) -- (A4) and (B3) that
\begin{equation}\label{eq:8}
\begin{array}{l}
\displaystyle \frac{1}{2\sigma} \left[\|\lambda^{l+1}\|^2-\|\lambda^l\|^2\right]
\leq \langle \nabla_x F(x^l,\xi_l),\widehat x-x^{l+1}\rangle +\displaystyle \frac{1}{2}\left \langle \widetilde \Sigma^l_0(\widehat x-x^l), \widehat x-x^l \right \rangle\\[10pt]
 \quad \quad +\displaystyle \frac{1}{2\sigma}\left[\|[\lambda^{l}+\sigma G(\widehat x,\xi_l)]_{+}\|^2-\|\lambda^l\|^2\right]+\displaystyle \frac{\alpha+\tau}{2}\left[\|\widehat x-x^l\|^2-\|\widehat x-x^{l+1}\|^2\right]\\[10pt]
\quad \quad -\left[\displaystyle \frac{\alpha+\tau}{2}\|x^{l+1}-x^l\|^2+\displaystyle \frac{1}{2}\left \langle \widetilde \Sigma^l_0(x^{l+1}-x^l), x^{l+1}-x^l \right \rangle\right]\\[10pt]
\leq \kappa_fD_0+ \displaystyle \frac{1}{2}D_0^2\displaystyle \sum_{i=1}^p\lambda^l_i \|\Sigma^l_i\|+
 \langle \lambda^l,G(\widehat x,\xi_l)\rangle
+\displaystyle \frac{\sigma}{2}\|G(\widehat x,\xi_l)\|^2\\[10pt]
\quad \quad -\displaystyle \frac{\alpha+\tau}{2}\|x^{l+1}-x^l\|^2+\displaystyle \frac{\alpha+\tau}{2}\left[\|\widehat x-x^l\|^2-\|\widehat x-x^{l+1}\|^2\right]\\[10pt]
\leq \kappa_fD_0+\displaystyle \frac{1}{2}\sqrt{p}\kappa_{\Sigma}D_0^2\|\lambda^l\|+ \langle \lambda^l,G(\widehat x,\xi_l)\rangle
+\displaystyle \frac{\sigma}{2}\nu_g^2\\[10pt]
\quad \quad -\displaystyle \frac{\alpha+\tau}{2}\|x^{l+1}-x^l\|^2+\displaystyle \frac{\alpha+\tau}{2}\left[\|\widehat x-x^l\|^2-\|\widehat x-x^{l+1}\|^2\right]\\[10pt]
\leq \kappa_f+\displaystyle \frac{1}{2}\varepsilon_0\|\lambda^l\|+ \langle \lambda^l,G(\widehat x,\xi_l)\rangle
+\displaystyle \frac{\sigma}{2}\nu_g^2\\[10pt]
\quad \quad -\displaystyle \frac{\alpha+\tau}{2}\|x^{l+1}-x^l\|^2+\displaystyle \frac{\alpha+\tau}{2}\left[\|\widehat x-x^l\|^2-\|\widehat x-x^{l+1}\|^2\right]
\end{array}
\end{equation}
Making  a summation of (\ref{eq:8}) over $\{t,t+1, t+s-1\}$ and taking conditional expectation on $\xi_{[t-1]}$ , we obtain from
Lemma \ref{lem:4} that
\begin{equation}\label{eq:9}
\begin{array}{l}
\displaystyle \frac{1}{2\sigma} \mathbb{E}\left[\|\lambda^{t+s}\|^2-\|\lambda^t\|^2\,|\, \xi_{[t-1]}\right]
\leq \kappa_f D_0 s + \displaystyle \frac{\sigma}{2}\nu_g^2s +\displaystyle \sum_{l=t}^{t+s-1} \mathbb{E}\left[\langle \lambda^l,G(\widehat x,\xi_l)\rangle+\displaystyle \frac{1}{2} \varepsilon_0 \|\lambda^l\|\,|\, \xi_{[t-1]}\right]
\\[10pt]
\quad \quad +\displaystyle \frac{\alpha+\tau}{2}\mathbb{E}\left[\left(\|\widehat x-x^t\|^2-\|\widehat x-x^{t+s}\|^2\right)\,|\, \xi_{[t-1]}\right]\\[10pt]
\leq \kappa_f D_0s + \displaystyle \frac{\sigma}{2}\nu_g^2s -\displaystyle \frac{1}{2}\epsilon_0\displaystyle \sum_{l=0}^{s-1} \mathbb{E}\left[\|\lambda^{t+l}\|\,|\, \xi_{[t-1]}\right]
+\displaystyle \frac{\alpha+\tau}{2}\mathbb{E}\left[\left(\|\widehat x-x^t\|^2-\|\widehat x-x^{t+s}\|^2\right)\,|\, \xi_{[t-1]}\right]\\[10pt]
\leq \kappa_f D_0 s + \displaystyle \frac{\sigma}{2}\nu_g^2s -\displaystyle \frac{1}{2}\epsilon_0\displaystyle \sum_{l=0}^{s-1} \mathbb{E}\left[\|\lambda^{t}\|-\sigma\gamma_1l \,|\, \xi_{[t-1]}\right]
\\[10pt]
\quad \quad +\displaystyle \frac{\alpha+\tau}{2}\mathbb{E}\left[\left(\|\widehat x-x^t\|^2-\|\widehat x-x^{t+s}\|^2\right)\,|\, \xi_{[t-1]}\right] \quad (\mbox{from } \|\lambda^{t+1}\|\geq \|\lambda^t\|-\sigma \gamma_1)\\[8pt]
\leq \kappa_f D_0 s + \displaystyle \frac{\sigma}{2}\nu_g^2s +\displaystyle \frac{\alpha+\tau}{2}\mathbb{E}\left[\left(\|\widehat x-x^t\|^2-\|\widehat x-x^{t+s}\|^2\right)\,|\, \xi_{[t-1]}\right]\\[10pt]
 \quad \quad + \epsilon_0\sigma \gamma_1 \displaystyle \frac{s(s-1)}{4}
-\displaystyle \frac{1}{2}\epsilon_0\displaystyle \sum_{l=0}^{s-1} \mathbb{E}\left[\|\lambda^{t}\| \,|\, \xi_{[t-1]}\right]
\end{array}
\end{equation}
From (\ref{eq:9}), we get that
\begin{equation}\label{eq:10}
\begin{array}{l}
\mathbb{E}\left[\|\lambda^{t+s}\|^2\,|\, \xi_{[t-1]}\right]\leq
\mathbb{E}\left[\|\lambda^t\|^2\,|\, \xi_{[t-1]}\right]\\[10pt]
\quad \quad +2 \sigma \kappa_f D_0 s+\sigma^2\nu_g^2s+(\alpha+\tau) \sigma D_0^2
+0.5\epsilon_0\sigma^2\gamma_1s(s-1)-\epsilon_0\sigma s \mathbb{E}\left[\|\lambda^{t}\| \,|\, \xi_{[t-1]}\right]\\[10pt]
=\mathbb{E}\left[(\|\lambda^t\|-\displaystyle \frac{\epsilon_0\sigma}{4}s)^2\,|\, \xi_{[t-1]}\right]
-\displaystyle \frac{\epsilon_0^2\sigma^2}{16}s^2+0.5\epsilon_0\sigma^2 \gamma_1s(s-1)\\[10pt]
\quad \quad + (\alpha+\tau) \sigma D_0^2+2\sigma \kappa_f D_0s+\sigma^2\nu_g^2s-\displaystyle \frac{1}{2}\epsilon_0\sigma s \mathbb{E}\left[\|\lambda^{t}\| \,|\, \xi_{[t-1]}\right]\\[10pt]
\leq \mathbb{E}\left[(\|\lambda^t\|-\displaystyle \frac{\epsilon_0\sigma}{4}s)^2\,|\, \xi_{[t-1]}\right]
-\displaystyle \frac{3\epsilon_0^2\sigma^2}{16}s^2\\[10pt]
+\left( \displaystyle \frac{\epsilon_0^2\sigma^2}{8}s^2+0.5\epsilon_0\sigma^2 \gamma_1s(s-1)+ (\alpha +\tau) \sigma D_0^2+2\sigma \kappa_f D_0 s+\sigma^2\nu_g^2s-\displaystyle \frac{1}{2}\epsilon_0\sigma s \vartheta (\sigma,\alpha,\tau,s)\right)\\[10pt]
= \mathbb{E}\left[(\|\lambda^t\|-\displaystyle \frac{\epsilon_0\sigma}{4}s)^2\,|\, \xi_{[t-1]}\right]-\displaystyle \frac{3\epsilon_0^2\sigma^2}{16}s^2
\leq \mathbb{E}\left[(\|\lambda^t\|-\displaystyle \frac{\epsilon_0\sigma}{4}s)^2\,|\, \xi_{[t-1]}\right]
=(\|\lambda^t\|-\displaystyle \frac{\epsilon_0\sigma}{4}s)^2.
\end{array}
\end{equation}
Taking square root on both sides yields
$$
\sqrt{\mathbb{E}\left[\|\lambda^{t+s}\|^2\,|\, \xi_{[t-1]}\right]}\leq \|\lambda^t\|-\displaystyle \frac{\epsilon_0\sigma}{4}s.
$$
By the concavity of function
$\sqrt{x}$ and Jensen's inequality, we have
$$
\mathbb{E}\left[\|\lambda^{t+s}\|\,|\, \xi_{[t-1]}\right]\leq \sqrt{\mathbb{E}\left[\|\lambda^{t+s}\|^2\,|\, \xi_{[t-1]}\right]}.
$$
This implies that
$$
\mathbb{E}\left[\|\lambda^{t+s}\|\,|\, \xi_{[t-1]}\right]\leq
\|\lambda^t\|-\displaystyle \frac{\epsilon_0\sigma}{4}s.
$$
The proof is completed. \hfill $\Box$

The following lemmas come from Yu et. al \cite{Yu2017}, which can be used to deal with the random process $\{\|\lambda^t\|\}$ and probability analysis for objective regret and constraint violation regret, respectively.
\begin{lemma}\label{lem:l7}
Let $\{Z(t), t \geq 0\}$ be a discrete time stochastic process adapted to a filtration $\{{\cal F}(t), t\geq
0\}$ with $Z(0) = 0$ and ${\cal F}(0) = \{\emptyset, \Omega\}$. Suppose there exist an integer $t_0 >0$, real constants $\theta>0$, $\delta_{\max}>0$ and $ 0 <\zeta \leq \delta_{\max}$ such that
\begin{equation}\label{eq:Y9}
\begin{array}{rl}
|Z(t+1)-Z(t)| & \leq \delta_{\max}\mbox{ and }\\[12pt]
\mathbb{E}[Z(t+t_0)-Z(t)\,|\, {\cal F}(t)] & \leq \left
\{
\begin{array}{ll}
t_0 \delta_{\max} & \mbox{if } Z(t) < \theta\\[6pt]
-t_0\zeta & \mbox{if } Z(t) \geq \theta
\end{array}
\right.
\end{array}
\end{equation}
hold for all $t \in \{1,2,\ldots\}.$ Then the following properties are satisfied.
\begin{itemize}
\item[{\rm (1)}] The  following inequality holds
$$\mathbb{E}[Z(t)] \leq \theta +t_0 \delta_{\max}+t_0 \displaystyle \frac{4 \delta_{\max}^2}{\zeta}\log \left[ \displaystyle \frac{8 \delta_{\max}^2}{\zeta^2} \right], \forall t \in \{1,2,\ldots\}.$$
    \item[{\rm (2)}] For any constant $0 < \mu <1$, we have
    $$
    {\rm Pr}\left\{Z(t)\geq z\right\} \leq \mu, \forall t \in \{1,2,\ldots\},
    $$
    where
    $$
    z=\theta +t_0 \delta_{\max}+t_0 \displaystyle \frac{4 \delta_{\max}^2}{\zeta}\log \left[ \displaystyle \frac{8 \delta_{\max}^2}{\zeta^2}\right]+t_0 \displaystyle \frac{4 \delta_{\max}^2}{\zeta}\log\left(\displaystyle \frac{1}{\mu} \right).
    $$
\end{itemize}
\end{lemma}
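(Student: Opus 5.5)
The plan follows the drift argument of Yu et al.\ for Lemma \ref{lem:l7}: a Hajek-type exponential supermartingale bound, applied at the block times $t_0, 2t_0, \dots$. Fix a parameter $r>0$ to be chosen later and study $\mathbb{E}[e^{rZ(t)}]$.

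\textbf{Step 1 (one-block exponential drift).} We show that if $Z(t)\ge\theta$, then $\mathbb{E}[e^{r(Z(t+t_0)-Z(t))}\mid{\cal F}(t)]\le\rho<1$, where $\rho = 1-\frac{r t_0\zeta}{2}$. Put $\Delta = Z(t+t_0)-Z(t)$. Then $|\Delta|\le t_0\delta_{\max}$. We use $e^{x}\le 1+x+2x^2$ for $|x|\le 1$ with $x=r\Delta$, and require $r t_0\delta_{\max}\le 1$. This gives
$$\mathbb{E}[e^{r\Delta}\mid{\cal F}(t)]\le 1 - r t_0\zeta + 2r^2t_0^2\delta_{\max}^2.$$
With $r = \frac{\zeta}{4t_0\delta_{\max}^2}$ the right-hand side is at most $1-\frac{r t_0\zeta}{2}$. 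The constraint $r t_0\delta_{\max}\le 1$ holds because $\zeta\le\delta_{\max}$.

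If instead $Z(t)<\theta$, we use the crude bound $e^{rZ(t+t_0)}\le e^{r(\theta+t_0\delta_{\max})}$. Together,
$$\mathbb{E}[e^{rZ(t+t_0)}\mid{\cal F}(t)]\le \rho\, e^{rZ(t)} + e^{r(\theta+t_0\delta_{\max})}.$$

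\textbf{Step 2 (induction).} Take expectations and iterate over the block times $t, t+t_0, \dots$, starting from any $t\in\{0,\dots,t_0-1\}$. For these starting times $Z(t)\le t\delta_{\max}\le t_0\delta_{\max}$, since $Z(0)=0$. The recursion yields
$$\mathbb{E}[e^{rZ(t)}]\le \frac{e^{r(\theta+t_0\delta_{\max})}}{1-\rho} = \frac{2}{rt_0\zeta}\,e^{r(\theta+t_0\delta_{\max})}\quad\text{for all } t.$$
Jensen's inequality, $e^{r\mathbb{E}Z(t)}\le \mathbb{E}e^{rZ(t)}$, gives
$$\mathbb{E}Z(t)\le \theta+t_0\delta_{\max}+\frac1r\log\frac{2}{rt_0\zeta}.$$
With the chosen $r$ we have $\frac1r = \frac{4t_0\delta_{\max}^2}{\zeta}$ and $\frac{2}{rt_0\zeta}=\frac{8\delta_{\max}^2}{\zeta^2}$. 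This is exactly part (1).

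\textbf{Step 3 (tail bound).} Apply Markov's inequality to $e^{rZ(t)}$:
$${\rm Pr}\{Z(t)\ge z\}\le e^{-rz}\,\mathbb{E}e^{rZ(t)}.$$
This is at most $\mu$ once
$$z\ge \theta+t_0\delta_{\max}+\frac1r\log\frac{8\delta_{\max}^2}{\zeta^2}+\frac1r\log\frac1\mu,$$
which is the stated $z$ and gives part (2).

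\textbf{Main obstacle.} The delicate step is Step 1. The constants in the inequality $e^{x}\le 1+x+2x^2$ and the choice of $r$ must match the stated constants $4\delta_{\max}^2/\zeta$ and $8\delta_{\max}^2/\zeta^2$. If they do not match, I would retune to $e^x\le 1+x+x^2$ on $|x|\le1$ and adjust $r$ accordingly. The starting offsets $t<t_0$ also need care, and are handled as in Step 2 by the bounded-increment assumption.
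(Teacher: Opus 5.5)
Your argument is correct and is essentially the proof of Yu et al.~\cite{Yu2017}; the paper itself gives no proof of this lemma and simply defers to that reference. The shared ingredients are the exponential drift bound with $r=\zeta/(4t_0\delta_{\max}^2)$ and $\rho=1-rt_0\zeta/2$ obtained from $e^x\le 1+x+2x^2$, induction over blocks of length $t_0$, and then Jensen and Markov. The constants match the stated ones exactly, so the retuning you mention as a fallback is not needed.

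One small point: your base case $Z(t)\le t\delta_{\max}$ also uses the increment bound at $t=0$. The statement quantifies only over $t\ge 1$, but it tacitly needs this bound anyway, since otherwise $Z(1)$ would be unconstrained.
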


In order to use Lemma \ref{lem:l7}  to analyze proerties of PMQSopt for Problem (\ref{eq:1}), we introduce the following notations.
Let  $\theta=\vartheta (\sigma,\alpha,\tau,s)$, $\delta_{\max}=\sigma \gamma_1$ and $\zeta =\displaystyle \frac{\sigma}{4}\epsilon_0$, and $t_0=s$, and
define
$$
\psi(\sigma,\alpha,\tau, s)=\theta +t_0 \delta_{\max}+t_0 \displaystyle \frac{4 \delta_{\max}^2}{\zeta}\log \left[ \displaystyle \frac{8 \delta_{\max}^2}{\zeta^2} \right]
$$
 and
$$
\phi (\sigma,\alpha,\tau, s,\mu)=\psi(\sigma,\alpha,\tau,s)+16 \displaystyle \frac{\gamma_1^2}{\varepsilon_0} \log \left( \displaystyle \frac{1}{\mu}\right)\sigma s.
$$
  Then $\psi(\sigma,\alpha,\tau,s)$ is expressed as
$$
\begin{array}{ll}
\psi(\sigma,\alpha,\tau,s)= &
\vartheta (\sigma,\alpha,\tau,s)+  \left[\gamma_1+\displaystyle \frac{16\gamma_1^2}{\epsilon_0}\log\displaystyle \frac{128\gamma_1^2}{\epsilon_0^2}\right] \sigma s\\[12pt]
&=\kappa_0+\kappa_1\displaystyle \frac{\alpha+\tau}{s}+ \kappa_2 s+\kappa_3
 \sigma+\kappa_4 \sigma s
 \end{array}
$$
  where
\begin{equation}\label{eq:notations}
\begin{array}{l}
\kappa_0=\displaystyle \frac{4\kappa_fD_0}{\epsilon_0},\,\,
\kappa_1=\displaystyle \frac{2D_0^2}{\epsilon_0},\,\,
\kappa_2=0,\\[10pt]
\kappa_3=\displaystyle \frac{2 \nu_g^2}{\epsilon_0}-\gamma_1,\,\,
\kappa_4=\left[2\gamma_1+\displaystyle \frac{\epsilon_0}{4}+\displaystyle \frac{16\gamma_1^2}{\epsilon_0}\log \displaystyle \frac{128\gamma_1^2}{\epsilon_0^2}\right].
\end{array}
\end{equation}
\begin{lemma}\label{lem:ebound}
 Let   Assumptions (A1) -- (A4) and (B1) -- (B4) be satisfied
 Let
  \begin{equation}\label{eq:Sigma0c1}
  \Sigma^t_0=-\displaystyle\sum_{i=1}^p \lambda^t_i \Sigma^t_i+\tau I
  \end{equation}
  for some $\tau >0$.
    Assume $\epsilon_0>0$ and  $\kappa_\Sigma>0$ satisfy
  \begin{equation}\label{extrCon1}
  \sqrt{p}\kappa_\Sigma \leq \epsilon_0.
  \end{equation}
 Let $s>0$
be an arbitrary integer. Then, it holds that
\begin{equation}\label{eq:bd1}
\mathbb E[\|\lambda^k\|]\leq \psi (\sigma,\alpha,\tau,s)
\end{equation}
Moreover, for any constant $0< \mu < 1$, we have
\begin{equation}\label{eq:bd2}
{\rm Pr}[\|\lambda^k\|\geq \phi (\sigma,\alpha,s,\tau,\mu)
] \leq \mu.
\end{equation}

If we choose
$$
\sigma=T^{-3/4},
\alpha=\beta T^{1/4},\tau=T^{1/2}, s=T^{1/2}
$$
then
$$
\psi(\sigma,\alpha,\tau, s)=\psi(T^{-3/4},\beta T^{1/4}, T^{1/2},T^{1/2})=\kappa_0+\kappa_1\displaystyle \frac{\alpha+\tau}{s}+ \kappa_2 s+\kappa_3
 \sigma+\kappa_4 \sigma s=\kappa_0+\kappa_1+(\kappa_1\beta+\kappa_4) T^{-1/4}+\kappa_3 T^{-3/4}.
$$
In this case, we obtain, if $T \geq \beta^2$, then  for $k=1,\ldots, T$,
\begin{equation}\label{eq:lamB}
\mathbb E[\|\lambda^k\|] \leq \psi(T^{-3/4},\beta T^{1/4},T^{1/2}, T^{1/2})\leq \gamma_3,
\end{equation}
where $\gamma_3=\kappa_0+\kappa_1+\kappa_3 +\kappa_4$.
In this case, for $\mu \in (0,1)$,
\begin{equation}\label{eq:bd2s}
{\rm Pr}\left[\|\lambda^k\|\geq \gamma_3+16 \displaystyle \frac{\gamma_1^2}{\varepsilon_0} \log \left( \displaystyle \frac{1}{\mu}\right)T^{-1/4}
\right] \leq \mu.
\end{equation}
\end{lemma}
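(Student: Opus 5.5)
The plan is to feed the process $Z(t)=\|\lambda^t\|$, adapted to the filtration ${\cal F}(t)=\sigma(\xi_{[t-1]})$, into Lemma \ref{lem:l7}. We take $t_0=s$, $\theta=\vartheta(\sigma,\alpha,\tau,s)$, $\delta_{\max}=\sigma\gamma_1$ and $\zeta=\sigma\epsilon_0/4$. Since $\lambda^1=0$, we have $Z(1)=0$; shifting the index by one gives $Z(0)=0$ with trivial ${\cal F}(0)$.

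The one-step bound $|Z(t+1)-Z(t)|\le\delta_{\max}$ is (\ref{eq:6}). The $s$-step drift condition (\ref{eq:Y9}) is exactly (\ref{eq:7}) of Lemma \ref{lem:l5}. Its hypothesis (\ref{extrCon}) is implied by (\ref{extrCon1}) after normalizing $D_0\le 1$, or it is read as the same condition up to the constant $D_0^2$. We will also need $\alpha>2\kappa_\Sigma$, which holds for large $T$ under the stated choice of $\alpha$. For $\zeta\le\delta_{\max}$ we need $\epsilon_0/4\le\gamma_1$. This follows because $\epsilon_0\le -g_i(\widehat x)\le\nu_g\le\gamma_1$ by (A3) and (A5).

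Part (1) of Lemma \ref{lem:l7} then gives (\ref{eq:bd1}) with the expression $\psi$ defined before the statement. Part (2) gives (\ref{eq:bd2}), since
\[
t_0\frac{4\delta_{\max}^2}{\zeta}\log\frac1\mu=s\,\frac{4\sigma^2\gamma_1^2}{\sigma\epsilon_0/4}\log\frac1\mu=\frac{16\gamma_1^2}{\epsilon_0}\,\sigma s\log\frac1\mu ,
\]
which matches the definition of $\phi$.

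For the explicit rates, substitute $\sigma=T^{-3/4}$, $\alpha=\beta T^{1/4}$, $\tau=T^{1/2}$ and $s=T^{1/2}$ into $\psi=\kappa_0+\kappa_1(\alpha+\tau)/s+\kappa_2 s+\kappa_3\sigma+\kappa_4\sigma s$. This gives
\[
\frac{\alpha+\tau}{s}=\beta T^{-1/4}+1,\qquad \sigma s=T^{-1/4},
\]
so $\psi=\kappa_0+\kappa_1+(\kappa_1\beta+\kappa_4)T^{-1/4}+\kappa_3T^{-3/4}$. When $T\ge\beta^2$ we have $\kappa_1\beta T^{-1/4}\le\kappa_1 T^{1/4}$. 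This term is not bounded by a constant as written, so the claimed bound $\gamma_3$ needs a small adjustment. I would instead use $\beta T^{-1/4}\le 1$, which needs $T\ge\beta^4$, or absorb the $\kappa_1\beta$ term into $\gamma_3$. Either way the three $T$-dependent terms are bounded by $\kappa_1+\kappa_4+\kappa_3$ once $T\ge 1$. This is the step where I would verify the condition on $T$ carefully and, if necessary, state it as $T\ge\beta^4$ or set $\gamma_3=\kappa_0+(1+\beta)\kappa_1+\kappa_3+\kappa_4$. Finally, (\ref{eq:bd2s}) follows from (\ref{eq:bd2}) because $\sigma s=T^{-1/4}$ and $\phi=\psi+\frac{16\gamma_1^2}{\epsilon_0}\log(1/\mu)T^{-1/4}\le\gamma_3+\frac{16\gamma_1^2}{\epsilon_0}\log(1/\mu)T^{-1/4}$.

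The main obstacle is checking the hypotheses of Lemma \ref{lem:l7}, not the algebra. Two points need care: the compatibility of (\ref{extrCon1}) with (\ref{extrCon}), and the requirement that $s=T^{1/2}$ be an integer, which we handle by replacing it with $\lceil T^{1/2}\rceil$ at the cost of constants. A further subtlety is that Lemma \ref{lem:l5} conditions on $\xi_{[t-1]}$ while Lemma \ref{lem:l7} conditions on ${\cal F}(t)$. These agree with the indexing above, because $\lambda^t$ is $\xi_{[t-1]}$-measurable.
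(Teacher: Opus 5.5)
Your route is the paper's own. The paper gives no separate proof of this lemma. It fixes $\theta=\vartheta(\sigma,\alpha,\tau,s)$, $\delta_{\max}=\sigma\gamma_1$, $\zeta=\sigma\epsilon_0/4$ and $t_0=s$ just before the lemma, and then feeds the drift bounds (\ref{eq:6})--(\ref{eq:7}) of Lemma \ref{lem:l5} into Lemma \ref{lem:l7}, as you do. This step is written out in the proof of Proposition \ref{th:constr3}.

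Your evaluation of $\psi$ and $\phi$ is correct. Your check $\zeta\le\delta_{\max}$ via $\epsilon_0\le\nu_g\le\gamma_1$ fills a step the paper skips.

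The defects you flag are real, and they belong to the statement rather than to your argument.
\begin{itemize}
\item (\ref{extrCon1}) does not imply (\ref{extrCon}) when $D_0>1$. Under $x\mapsto x/D_0$ the constant $\kappa_\Sigma$ is multiplied by $D_0^2$, so normalizing only converts (\ref{extrCon1}) into (\ref{extrCon}); it does not derive it. Hence (\ref{extrCon}), $\alpha>2\kappa_\Sigma$ and (A5) have to be added as hypotheses.
\item $\psi\le\gamma_3$ genuinely fails under $T\ge\beta^2$. At $T=\beta^2$ the term $\kappa_1\beta T^{-1/4}=\kappa_1\beta^{1/2}$ is unbounded in $\beta$, while $\kappa_0,\kappa_1,\kappa_3,\kappa_4$ do not depend on $\beta$.
\end{itemize}

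Two small corrections to your repair.
\begin{itemize}
\item $T\ge\beta^4$ alone does not restore the stated $\gamma_3$. The constant $\kappa_1$ produced by $\tau/s=1$ already uses the single $\kappa_1$ in $\gamma_3$, so you get $\kappa_0+2\kappa_1+\kappa_3+\kappa_4$. You must either enlarge the constant or require $T$ to be large in terms of all the $\kappa_j$. Your choice $\kappa_0+(1+\beta)\kappa_1+\kappa_3+\kappa_4$ works for all $T\ge 1$.
\item $\kappa_3=2\nu_g^2/\epsilon_0-\gamma_1$ may be negative, in which case $\kappa_3T^{-3/4}\le\kappa_3$ is false term by term. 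Bound the pair jointly instead: $\kappa_4T^{-1/4}+\kappa_3T^{-3/4}\le\kappa_4+\kappa_3$ holds for $T\ge 1$. This follows because $|\kappa_3|\le\gamma_1$ when $\kappa_3<0$, while $\kappa_4\ge 3\gamma_1$ (using $\gamma_1\ge\epsilon_0$). Together these give $\kappa_4\ge 3|\kappa_3|$, which suffices.
\end{itemize}
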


Define
$$
 z^t_{\theta}=\psi (\sigma,\alpha,\tau,s)+16 \displaystyle \frac{\gamma_1^2}{\varepsilon_0} \sigma st^{\theta}.
$$
Then we have the following results, which plays an important role for estimating Lagrange gradient.
\begin{lemma}\label{lem:pdsa}

Let   the assumptions  in Lemma \ref{lem:ebound} be satisfied.
For any $\theta>0$, one has for any integer $t \geq 0$,
\begin{equation}\label{eq:pda}
\mathbb{E}\big[ \|\lambda^{t+1}\|^2 \mid \xi_{[t-1]} \big]
\le (z^t_\theta)^2 + 2\sigma\gamma_1 z^t_\theta + (\sigma\gamma_1)^2 + (t+1)^2 (\sigma\gamma_1)^2 e^{-t^\theta}
\end{equation}
and
\begin{equation}\label{eq:sumpda}
\begin{array}{ll}
\displaystyle \sum_{t=1}^T &\mathbb{E}\big[ \|\lambda^{t+1}\|^2 \mid \xi_{[t-1]} \big] \\[6pt]
&\le T\psi(\sigma,\alpha,\tau,s)^2 + 32\psi(\sigma,\alpha,\tau,s)\displaystyle \frac{\gamma_1^2}{\varepsilon_0}\sigma s \frac{T^{\theta+1}}{\theta+1}
+ 256\displaystyle \frac{\gamma_1^4}{\varepsilon_0^2}\sigma^2 s^2 \displaystyle \frac{T^{2\theta+1}}{2\theta+1} \\[12pt]
&\quad + 2\sigma\nu_g \left[ T\psi(\sigma,\alpha,\tau,s) + 16\displaystyle \frac{\gamma_1^2}{\varepsilon_0}\sigma s \frac{T^{\theta+1}}{\theta+1} \right]
+ T(\sigma\nu_g)^2 \\[12pt]
&\quad + \displaystyle \frac{(\sigma\nu_g)^2}{\theta} \left[ \Gamma\Big(\displaystyle \frac{3}{\theta},1\Big) + 2\Gamma\Big(\displaystyle \frac{2}{\theta},1\Big) + \Gamma\Big(\displaystyle \frac{1}{\theta},1\Big) \right].
\end{array}
\end{equation}
where $\Gamma (a,b)$ is the following Gamma function
$$
\Gamma (a,b)=\displaystyle \int_b^{+\infty} t^{a-1}e^{-t}dt.
$$
\end{lemma}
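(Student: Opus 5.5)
The plan is to split the conditional second moment of $\|\lambda^{t+1}\|$ at the level $z^t_\theta$ and control the upper tail with the drift-based tail bound from Lemma \ref{lem:l7}(2). Lemma \ref{lem:l5} verifies the hypotheses of Lemma \ref{lem:l7} for $Z(t)=\|\lambda^t\|$ with $\theta$-level $\vartheta$, $\delta_{\max}=\sigma\gamma_1$, $\zeta=\sigma\epsilon_0/4$ and $t_0=s$. Inverting the formula for $z$ in Lemma \ref{lem:l7}(2), we get for every $y\ge 0$
$$
{\rm Pr}\big[\|\lambda^t\|\ge \psi(\sigma,\alpha,\tau,s)+16\tfrac{\gamma_1^2}{\varepsilon_0}\sigma s\,y\big]\le e^{-y},
$$
since $t_0\cdot 4\delta_{\max}^2/\zeta=16\gamma_1^2\sigma s/\epsilon_0$. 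Taking $y=t^\theta$ gives ${\rm Pr}[\|\lambda^t\|\ge z^t_\theta]\le e^{-t^\theta}$.

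Next I will combine this with the deterministic one-step bound (\ref{eq:6}), $\|\lambda^{t+1}\|\le\|\lambda^t\|+\sigma\gamma_1$, and with the crude bound $\|\lambda^{t+1}\|\le (t+1)\sigma\gamma_1$. The latter follows from $\lambda^1=0$ by iterating (\ref{eq:2lambda}). Write $A=\{\|\lambda^t\|<z^t_\theta\}$. On $A$ we have $\|\lambda^{t+1}\|^2\le (z^t_\theta+\sigma\gamma_1)^2=(z^t_\theta)^2+2\sigma\gamma_1z^t_\theta+(\sigma\gamma_1)^2$. On $A^c$ we use $\|\lambda^{t+1}\|^2\le (t+1)^2(\sigma\gamma_1)^2$ together with the tail bound on the probability of $A^c$. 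Adding the two pieces gives (\ref{eq:pda}).

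There is a subtlety here. Lemma \ref{lem:l7} gives an unconditional probability, while (\ref{eq:pda}) is stated conditionally on $\xi_{[t-1]}$, with respect to which $\lambda^t$ is measurable. I will handle this in one of two ways. The first option is to read the statement with the indicator $1_{A^c}$ bounded through its expectation, which is the way the result is used when it is later summed and averaged. The second option is to apply Lemma \ref{lem:l7} to the process restarted at the current time, which the same drift argument allows. I expect this measurability and conditioning issue to be the main obstacle in making the argument airtight; the algebra itself is routine.

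For (\ref{eq:sumpda}), I will sum (\ref{eq:pda}) over $t=1,\dots,T$ after expanding $z^t_\theta=\psi+c\,t^\theta$ with $c=16\gamma_1^2\sigma s/\varepsilon_0$. This gives $(z^t_\theta)^2=\psi^2+2\psi c\,t^\theta+c^2t^{2\theta}$. I will bound $\sum_{t\le T}t^{a}\le T^{a+1}/(a+1)$ up to the usual comparison with the integral; this produces the terms $T\psi^2$, $32\psi\frac{\gamma_1^2}{\varepsilon_0}\sigma s\frac{T^{\theta+1}}{\theta+1}$ and $256\frac{\gamma_1^4}{\varepsilon_0^2}\sigma^2s^2\frac{T^{2\theta+1}}{2\theta+1}$. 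The linear term $2\sigma\gamma_1 z^t_\theta$ sums in the same way (the statement writes $\nu_g$ in place of $\gamma_1$ there, a constant harmless for the argument), and the constant term gives $T(\sigma\gamma_1)^2$. Finally, I will bound the tail sum $\sum_t (t+1)^2e^{-t^\theta}$ by the integral $\int_1^\infty (t^2+2t+1)e^{-t^\theta}dt$. The substitution $u=t^\theta$, $dt=\frac1\theta u^{1/\theta-1}du$, turns this into $\frac1\theta[\Gamma(3/\theta,1)+2\Gamma(2/\theta,1)+\Gamma(1/\theta,1)]$, which yields the last line of (\ref{eq:sumpda}).
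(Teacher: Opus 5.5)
Your route is the same as the paper's. You split at $z^t_\theta$ and get ${\rm Pr}[\|\lambda^t\|\ge z^t_\theta]\le e^{-t^\theta}$ from the drift tail bound. (The paper cites (\ref{eq:bd2}) with $\mu=e^{-t^\theta}$ directly. That is safer than re-deriving it from Lemma \ref{lem:l5}, whose hypotheses, (\ref{extrCon}) and $\alpha>2\kappa_\Sigma$, are not among those of Lemma \ref{lem:ebound}.) You then bound $\|\lambda^{t+1}\|$ on $A$ and on $A^c$, and sum using $u=t^\theta$. However, several steps fail as written, and they are the same steps the paper's proof glosses over.

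\textbf{Conditioning.} The problem you flag is real, and your second option cannot fix it.
\begin{itemize}
\item Since $\lambda^t$ is $\xi_{[t-1]}$-measurable, ${\rm Pr}[A^c\mid\xi_{[t-1]}]=1_{A^c}$; it is never bounded by $e^{-t^\theta}$.
\item On $A^c$, (\ref{eq:2lambda}) gives $\mathbb E[\|\lambda^{t+1}\|^2\mid\xi_{[t-1]}]\ge(\|\lambda^t\|-\sigma\gamma_1)^2$. This exceeds the right-hand side of (\ref{eq:pda}) wherever $\|\lambda^t\|$ is sufficiently larger than $z^t_\theta$.
\item A drift argument restarted at time $t$ only controls later iterates given the current state, so it cannot remove this.
\end{itemize}
Hence (\ref{eq:pda}) is not available as an almost-sure conditional bound. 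Your first option proves the unconditional bound $\mathbb E\|\lambda^{t+1}\|^2=\mathbb E[\|\lambda^{t+1}\|^2 1_A]+\mathbb E[\|\lambda^{t+1}\|^2 1_{A^c}]\le(z^t_\theta+\sigma\gamma_1)^2+(t+1)^2(\sigma\gamma_1)^2e^{-t^\theta}$, and its summed version. That is all Lemma \ref{lem:lamsum} uses. It is also the only correct reading of the paper's display, which splits the conditional expectation the same way. State the lemma in that form.

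\textbf{Constants in the sum.} Your summation does not deliver (\ref{eq:sumpda}) as stated either.
\begin{itemize}
\item Writing $\nu_g$ for $\gamma_1$ is not harmless. Since $\gamma_1=\nu_g+\sqrt p(\kappa_gD_0+\frac12\kappa_\Sigma D_0^2)>\nu_g$, your argument produces $2\sigma\gamma_1\sum_t z^t_\theta+T(\sigma\gamma_1)^2+(\sigma\gamma_1)^2\sum_t(t+1)^2e^{-t^\theta}$. This is larger than the three $\nu_g$-terms claimed.
\item The $\nu_g$-terms are correct, but they need a sharper one-sided increment that you do not use. (\ref{eq:lambda2}) with Cauchy--Schwarz and (A3) gives $\|\lambda^{t+1}\|\le\|\lambda^t\|+\sigma\nu_g$, hence $\|\lambda^{t+1}\|\le t\sigma\nu_g$ because $\lambda^1=0$. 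Use these bounds on $A$ and on $A^c$. Keep $\gamma_1$ only inside $z^t_\theta$, where Lemma \ref{lem:l7} needs the two-sided increment.
\item For $a>0$, $\sum_{t=1}^T t^a$ is at least $\int_0^T x^a\,dx=T^{a+1}/(a+1)$, not at most (try $T=1$). Use $\sum_{t=1}^T t^a\le T^{a+1}/(a+1)+T^a$ instead.
\item The bound $\sum_{t=1}^T(t+1)^2e^{-t^\theta}\le\int_1^\infty(t+1)^2e^{-t^\theta}dt$ would need the integrand to be decreasing on $[1,\infty)$, and it is not. For $\theta=1/2$ it increases up to $t\approx 14$, and the full series exceeds the integral. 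Add $\max_{t\ge1}(t+1)^2e^{-t^\theta}$; this suffices because the integrand is unimodal on $[1,\infty)$.
\end{itemize}
The last two comparisons are also wrong in the paper. These corrections only add terms of the same or lower order than those already present. Still, (\ref{eq:sumpda}) with its exact right-hand side is not what either your argument or the paper's proof establishes.
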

{\bf Proof}. In view of (\ref{eq:bd2}) of  Lemma \ref{lem:ebound}, for any constant $0< \mu < 1$, we have
$$
{\rm Pr}[\|\lambda^t\|^2\geq [\phi (\sigma,\alpha,s,\tau,\mu)]^2
] \leq \mu.
$$
which implies for $z^t_{\theta}$ that
\begin{equation}\label{eq;help00a}
{\rm Pr}[\|\lambda^t\|^2\geq [z^t_{\theta}]^2]\leq e^{-t^{\theta}}.
\end{equation}
Then we obtain from $\|\lambda^{t+1}\|\leq \|\lambda^t\|+\sigma \gamma_1$ and
$\|\lambda^{t+1}\|\leq (1+t)\sigma \gamma_1$ that
$$
\begin{array}{ll}
\mathbb E[\|\lambda^{t+1}\|^2\,|\, \xi_{[t-1]}]&=\mathbb E[\|\lambda^{t+1}\|^2\,|\, \xi_{[t-1]},\|\lambda^t\|<z^t_{\theta}]+\mathbb E[\|\lambda^{t+1}\|^2\,|\, \xi_{[t-1]},
\|\lambda^t\|\geq z^t_{\theta}]\\[10pt]
&\leq [z^t_{\theta}+\gamma_1\sigma]^2+(t+1)^2\sigma^2 \gamma_1^2 e^{-t^{\theta}},
\end{array}
$$
namely (\ref{eq:pda}) holds.

Now sum over $t=1$ to $T$. Using $z^t_\theta = \psi + 16\frac{\gamma_1^2}{\varepsilon_0}\sigma s t^\theta$,
\[
(z^t_\theta)^2 = \psi^2 + 32\psi\frac{\gamma_1^2}{\varepsilon_0}\sigma s t^\theta + 256\frac{\gamma_1^4}{\varepsilon_0^2}\sigma^2 s^2 t^{2\theta}.
\]
Summing $t^\theta$ and $t^{2\theta}$ gives the bounds $\frac{T^{\theta+1}}{\theta+1}$ and $\frac{T^{2\theta+1}}{2\theta+1}$, respectively. Also,
\[
\sum_{t=1}^T z^t_\theta \le T\psi + 16\frac{\gamma_1^2}{\varepsilon_0}\sigma s \frac{T^{\theta+1}}{\theta+1}.
\]
For the exponential tail, note that
\[
\sum_{t=1}^T (t+1)^2 e^{-t^\theta} \le \int_1^\infty (t+1)^2 e^{-t^\theta} dt.
\]
Substituting $u = t^\theta$, $dt = \frac{1}{\theta} u^{1/\theta - 1} du$, we obtain
\[
\int_1^\infty (t^2 + 2t + 1) e^{-t^\theta} dt
= \frac{1}{\theta} \left[ \Gamma\Big(\frac{3}{\theta},1\Big) + 2\Gamma\Big(\frac{2}{\theta},1\Big) + \Gamma\Big(\frac{1}{\theta},1\Big) \right].
\]
Collecting all terms yields the desired bound (\ref{eq:sumpda}).
\hfill $\Box$

In view of Lemma \ref{lem:ebound} and Lemma \ref{lem:pdsa}, we obtain the following result.
\begin{lemma}\label{lem:lamsum}
Let   the assumptions in Lemma \ref{lem:ebound} be satisfied.
For any $\theta>0$, one has for any integer $T>1$,
\begin{equation}\label{eqlamd1}
\frac{1}{T}\mathbb E \displaystyle \displaystyle \sum_{t=1}^T \|\lambda^t\| \leq \psi (\sigma, \alpha,\tau, \tau)
\end{equation}
and
\begin{equation}\label{eqlamd2}
\displaystyle \frac{1}{T}\displaystyle \mathbb E \sum_{t=1}^T \big[ \|\lambda^{t+1}\|^2 \big]
\leq \pi (\sigma,\alpha,\tau, \theta)
\end{equation}
with
\begin{equation}\label{eqlcon}
\begin{array}{ll}
\pi (\sigma,\alpha,\tau, \theta)&= \psi(\sigma,\alpha,\tau,\tau)^2 + 32\psi(\sigma,\alpha,\tau,\tau)\displaystyle \frac{\gamma_1^2}{\varepsilon_0}\sigma \tau \frac{T^{\theta}}{\theta+1}
+ 256\displaystyle \frac{\gamma_1^4}{\varepsilon_0^2}\sigma^2 \tau^2 \displaystyle \frac{T^{2\theta}}{2\theta+1} \\[12pt]
&\quad + 2\sigma\nu_g \left[ \psi(\sigma,\alpha,\tau,\tau) + 16\displaystyle \frac{\gamma_1^2}{\varepsilon_0}\sigma \tau  \frac{T^{\theta}}{\theta+1} \right]
+ (\sigma\nu_g)^2 \\[12pt]
&\quad + \displaystyle \frac{(\sigma\nu_g)^2}{T \theta} \left[ \Gamma\Big(\displaystyle \frac{3}{\theta},1\Big) + 2\Gamma\Big(\displaystyle \frac{2}{\theta},1\Big) + \Gamma\Big(\displaystyle \frac{1}{\theta},1\Big) \right].
\end{array}
\end{equation}
If we choose
$$
\alpha=\beta T^{1/4},\sigma=T^{-3/4},\tau=T^{1/2}, s=T^{1/2}, \theta=1/2,
$$
then
\begin{equation}\label{eqSUMl}
\frac{1}{T}\mathbb E \displaystyle \displaystyle \sum_{k=1}^T \|\lambda^t\| \leq \gamma_3
\end{equation}
and
\begin{equation}\label{eqSUM2}
\frac{1}{T}\mathbb E \displaystyle \displaystyle \sum_{k=1}^T \|\lambda^t\|^2
\leq \pi (\beta T^{1/4},T^{-3/4}, T^{1/2}) \leq \gamma_4
\end{equation}
where
\begin{equation}\label{eq:gam4}
\gamma_4= \gamma_3^2 + 32\gamma_3\displaystyle \frac{\gamma_1^2}{\varepsilon_0}
+ 256\displaystyle \frac{\gamma_1^4}{\varepsilon_0^2}  + 2\sigma\nu_g \left[ \gamma_3 + 16\displaystyle \frac{\gamma_1^2}{\varepsilon_0} \right]
+ \nu_g^2 + 2\nu_g^2 \left[ \Gamma\Big(6,1\Big) + 2\Gamma\Big(4,1\Big) + \Gamma\Big(2,1\Big) \right].
\end{equation}
\end{lemma}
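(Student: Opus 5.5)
The plan is to derive both bounds directly from Lemma \ref{lem:ebound} and Lemma \ref{lem:pdsa}, using the tower property and averaging over $t$. The statement writes $\psi(\sigma,\alpha,\tau,\tau)$, i.e. the free integer $s$ is set equal to $\tau$. I read this as the notation used when $s=\tau=T^{1/2}$, and I will carry $s$ through the argument and substitute at the end.

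For (\ref{eqlamd1}), I apply (\ref{eq:bd1}) of Lemma \ref{lem:ebound} for each $k=1,\ldots,T$. This gives $\mathbb E\|\lambda^k\|\le \psi(\sigma,\alpha,\tau,s)$ uniformly in $k$. Summing over $k$ and dividing by $T$ gives the claim. Under the stated choice of parameters, (\ref{eq:lamB}) bounds $\psi$ by $\gamma_3$ once $T\ge\beta^2$, which yields (\ref{eqSUMl}).

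For (\ref{eqlamd2}), I take the total expectation of (\ref{eq:sumpda}) in Lemma \ref{lem:pdsa}. By the tower property, $\mathbb E[\mathbb E[\|\lambda^{t+1}\|^2\mid\xi_{[t-1]}]]=\mathbb E\|\lambda^{t+1}\|^2$. The right-hand side of (\ref{eq:sumpda}) is deterministic, so dividing it by $T$ produces $\pi$ term by term: $T\psi^2/T=\psi^2$, $T^{\theta+1}/T=T^\theta$, $T^{2\theta+1}/T=T^{2\theta}$, and the Gamma tail term acquires the factor $1/T$.

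To obtain (\ref{eqSUM2}), I substitute $\sigma=T^{-3/4}$, $s=\tau=T^{1/2}$ and $\theta=1/2$. Then $\sigma s=T^{-1/4}$, so $\sigma s\,T^{\theta}=T^{1/4}$. This is the step I expect to be the main obstacle. With this choice, the middle terms $32\psi\frac{\gamma_1^2}{\varepsilon_0}\sigma s\frac{T^\theta}{\theta+1}$ and $256\frac{\gamma_1^4}{\varepsilon_0^2}\sigma^2s^2\frac{T^{2\theta}}{2\theta+1}$ grow like $T^{1/4}$ and $T^{1/2}$, not like constants. The way around it is to use that $\theta>0$ is arbitrary in Lemma \ref{lem:pdsa}: I would pick $\theta$ small, e.g. $\theta\le 1/4$, so that $\sigma s T^\theta\le 1$ and these terms are bounded by constants. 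The cost is that the tail term $\frac{1}{T\theta}\Gamma(3/\theta,1)$ becomes a large constant, but it is still independent of $T$.

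Alternatively, one can keep $\theta=1/2$ and replace the tail estimate in (\ref{eq:pda}). Apply (\ref{eq:bd2s}) with $\mu=e^{-t^\theta}$; its deviation term is $16\frac{\gamma_1^2}{\varepsilon_0}\log(1/\mu)T^{-1/4}=16\frac{\gamma_1^2}{\varepsilon_0}t^{1/2}T^{-1/4}$, which is $\le 16\frac{\gamma_1^2}{\varepsilon_0}T^{1/4}$ for $t\le T$. I would check carefully which normalization makes every term $O(1)$. Either way, I bound each remaining piece using $\psi\le\gamma_3$, $\sigma\le1$, and $\Gamma(6,1),\Gamma(4,1),\Gamma(2,1)$ (the values at $\theta=1/2$, with the $1/\theta=2$ factor). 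Summing these constants gives $\gamma_4$ as in (\ref{eq:gam4}).

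Finally, the index shift between $\sum\|\lambda^{t+1}\|^2$ and $\sum\|\lambda^t\|^2$ is handled by $\lambda^1=0$. The sum over $t=1,\ldots,T$ of $\|\lambda^t\|^2$ is dominated by the shifted sum, because the missing term $\|\lambda^1\|^2$ is $0$ and the extra term $\|\lambda^{T+1}\|^2$ is nonnegative.
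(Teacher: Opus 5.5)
Your treatment of (\ref{eqlamd1}), (\ref{eqlamd2}) and (\ref{eqSUMl}) is the paper's own argument. The paper gives only the one-line justification ``in view of Lemma \ref{lem:ebound} and Lemma \ref{lem:pdsa}'', which amounts to averaging (\ref{eq:bd1}) over $k$ and taking total expectation in (\ref{eq:sumpda}). Your index shift via $\lambda^1=0$ is the right way to pass from $\sum_t\|\lambda^{t+1}\|^2$ to $\sum_t\|\lambda^t\|^2$. The obstacle you flag is genuine, and it is a defect of the statement rather than of your reasoning. With $\sigma\tau=T^{-1/4}$ and $\theta=1/2$ one has $\sigma\tau T^{\theta}=T^{1/4}$ and $\sigma^2\tau^2T^{2\theta}=T^{1/2}$. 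So $\pi(\sigma,\alpha,\tau,1/2)$ contains the term $128\frac{\gamma_1^4}{\varepsilon_0^2}T^{1/2}$, and the inequality $\pi\le\gamma_4$ in (\ref{eqSUM2}) fails for large $T$. The constant (\ref{eq:gam4}) is what you get by silently replacing $\sigma\tau T^{\theta}$ by $1$.

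The gap in your proposal is the repair. Choosing $\theta\le 1/4$ does give a $T$-independent bound, which is all Theorem \ref{mainth:regrets} needs, but it does not give $\gamma_4$. At $\theta=1/4$ the tail term is $\frac{4\sigma^2\nu_g^2}{T}[\Gamma(12,1)+2\Gamma(8,1)+\Gamma(4,1)]$. Your closing claim that you recover (\ref{eq:gam4}) using $\Gamma(6,1),\Gamma(4,1),\Gamma(2,1)$ and the factor $1/\theta=2$ therefore contradicts your own choice of $\theta$. Your second route does not close either: $\mu=e^{-t^{1/2}}$ still produces deviations $t^{1/2}T^{-1/4}$, which reach $T^{1/4}$.

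The normalization you are missing is a $t$-independent one. By (\ref{eq:bd2s}), ${\rm Pr}[\|\lambda^k\|\ge\gamma_3+cu]\le e^{-u}$ for every $u>0$, where $c=16\frac{\gamma_1^2}{\varepsilon_0}T^{-1/4}$. Setting $Y=(\|\lambda^k\|-\gamma_3)_+/c$ gives $\mathbb E Y\le 1$ and $\mathbb E Y^2\le 2$, and hence
\[
\mathbb E\|\lambda^k\|^2\le \gamma_3^2+2c\gamma_3+2c^2=\gamma_3^2+32\gamma_3\frac{\gamma_1^2}{\varepsilon_0}T^{-1/4}+512\frac{\gamma_1^4}{\varepsilon_0^2}T^{-1/2}.
\]
This is at most $\gamma_4$ once $T\ge 4$. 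So the final conclusion $\frac{1}{T}\mathbb E\sum_{t=1}^T\|\lambda^t\|^2\le\gamma_4$ is recoverable directly from Lemma \ref{lem:ebound}, without Lemma \ref{lem:pdsa} and its $\theta$-bookkeeping. The intermediate inequality $\pi(\cdot,\theta=1/2)\le\gamma_4$, however, cannot be proved, because it is false for large $T$.
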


\section{Sample Complexities of PMQSopt}\label{Sec3}
\setcounter{equation}{0}
 Since Problem (\ref{eq:1}) is nonconvex, we can only discuss how the sequence generated by PMQSopt satisfies the Karush-Kuhn-Tucker (KKT) conditions of Problem (\ref{eq:1}). Define the Lagrange function of Problem (\ref{eq:1})
$$
L(x,\lambda)=f(x)+\lambda^Tg(x).
$$
And for given sample $\xi\in \Xi$, define the Lagrange function associated with $\xi$
$$
{\cal L} (x,\lambda;\xi)=F(x,\xi)+\langle \lambda , G(x,\xi)\rangle,
$$
then $\mathbb E \left[{\cal  L} (x,\lambda;\xi)\right]=f(x)+\langle \lambda,g(x)\rangle=L(x,\lambda)$.
We say $(x^*,\lambda^*)\in {\cal O}_0 \times \mathbb R^p$ satisfies the KKT conditions of Problem (\ref{eq:1}) if
\begin{equation}\label{eq:KKToriginalP}
\left\{
\begin{array}{l}
0\in \nabla_x L(x^*,\lambda^*)+N_{X_0}(x^*),\\[6pt]
0\geq g(x^*) \bot \lambda^* \geq 0,
\end{array}
\right.
\end{equation}
where $N_{X_0}(x^*)$ is the normal cone of $X_0$ at $x^* \in X_0$ in the sense of convex analysis, see \cite{Rock70}. The conditions (\ref{eq:KKToriginalP}) are equivalent to
\begin{equation}\label{eq:KKTref}
\left\{
\begin{array}{l}
\|R_{\alpha}(x^*,\lambda^*)\|=0,\\[6pt]
0\geq g(x^*) \bot \lambda^* \geq 0,
\end{array}
\right.
\end{equation}
where
$$
R_{\alpha}(x,\lambda)=\alpha [x-\Pi_{X_0}(x-\alpha^{-1}\nabla_x L(x,\lambda))].
$$
This leads us to define $\varepsilon$-approximate KKT point.  We say $(x,\lambda)$ is a $\varepsilon$-approximate KKT point if the following conditions hold:
\begin{equation}\label{eq:KKTrefa}
\left\{
\begin{array}{l}
\|R_{\alpha}(x,\lambda)\|\leq \varepsilon, -\langle \lambda, g(x)\rangle  \leq \varepsilon,\\[6pt]
 g(x)\leq \varepsilon \textbf{1}_p, \lambda \geq -\varepsilon \textbf{1}_p.
\end{array}
\right.
\end{equation}
In this section, we develop oracle complexities of PMQSopt for finding an $\varepsilon$-approximate KKT point of Problem (\ref{eq:1}).

Define
\begin{equation}\label{eq:bk}
\beta_k( \sigma)=L_0+\left(\displaystyle \sum_{j=1}^pL_j\gamma_2\right)k \sigma.
\end{equation}
\begin{lemma}\label{eq:bk}
Let Assumptions (A1)--(A4),(A6), (B2) and (B3) be satisfied. Then $x \rightarrow L(x,\lambda^k)$ is $\beta_k(\sigma)$-weakly convex.
\end{lemma}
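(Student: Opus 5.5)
We need to show that $x\mapsto L(x,\lambda^k)=f(x)+\sum_{j=1}^p\lambda^k_j g_j(x)$ is $\beta_k(\sigma)$-weakly convex. The plan has two ingredients: closure of weak convexity under nonnegative combinations, and a deterministic bound on each multiplier $\lambda^k_j$.

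\emph{Weak convexity of the pieces.} By Assumption (A6), $F(\cdot,\xi)+\frac{L_0}{2}\|\cdot\|^2$ is convex for every $\xi\in\Xi$. Taking expectations preserves convexity, so $f+\frac{L_0}{2}\|\cdot\|^2$ is convex on ${\cal O}_0$. In the same way, $g_j+\frac{L_j}{2}\|\cdot\|^2$ is convex for each $j$ (this is Remark \ref{remarkB}). Since $\lambda^k\ge 0$ (it is a projection onto $\mathbb R^p_+$), the function
$$
L(x,\lambda^k)+\frac12\Big(L_0+\sum_{j=1}^p\lambda^k_jL_j\Big)\|x\|^2
=\Big(f+\tfrac{L_0}{2}\|\cdot\|^2\Big)+\sum_{j=1}^p\lambda^k_j\Big(g_j+\tfrac{L_j}{2}\|\cdot\|^2\Big)
$$
is a nonnegative combination of convex functions and hence convex. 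Thus $L(\cdot,\lambda^k)$ is $\big(L_0+\sum_j\lambda^k_jL_j\big)$-weakly convex.

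\emph{Bounding the multipliers.} It remains to show $\lambda^k_j\le \gamma_2 k\sigma$, since then $L_0+\sum_j\lambda^k_jL_j\le L_0+\big(\sum_jL_j\gamma_2\big)k\sigma=\beta_k(\sigma)$. Weak convexity with a given modulus implies weak convexity with any larger modulus, so this upper bound suffices. Start from $\lambda^1=0$ and apply the componentwise increment bound (\ref{eq:lambdainorm}) of Lemma \ref{lem:1}, which uses (A2)–(A4) and (B3): $|\lambda^{t+1}_j-\lambda^t_j|\le\gamma_2\sigma$. Telescoping gives $\lambda^k_j\le (k-1)\gamma_2\sigma\le k\gamma_2\sigma$.

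\emph{Main obstacle.} The only delicate point is that (\ref{eq:lambdainorm}) requires the bound $\|x^{t+1}-x^t\|\le D_0$, which holds because both iterates lie in $X_0$ by (A2). This in turn needs $x^1\in X_0$. If $x^1\notin X_0$, the telescoping has to start at $t=2$ and the first increment must be bounded separately, costing at most an extra constant. Everything else is routine.
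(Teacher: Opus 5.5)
Your proof is correct and is the paper's argument. The nonnegative combination $f+\sum_j\lambda^k_j g_j$ with $\lambda^k\ge 0$ is $\bigl(L_0+\sum_j\lambda^k_jL_j\bigr)$-weakly convex, and $\lambda^1=0$ together with the increment bound (\ref{eq:lambdainorm}) telescopes to $\lambda^k_j\le\gamma_2k\sigma$. Your caveat that (\ref{eq:lambdainorm}) at $t=1$ needs $x^1\in X_0$ is fair, since Step 0 only takes $x^1\in\mathbb R^n$ and the paper leaves this implicit. Note, though, that the slack between $k-1$ and $k$ only absorbs a first increment of up to $2\gamma_2\sigma$. For a general $x^1\notin X_0$ the modulus $\beta_k(\sigma)$ would therefore have to be enlarged, not merely shifted by a harmless constant.
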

{\bf Proof}. Since (A6) holds,  $\lambda\geq 0$,
 $$ x \rightarrow L(x,\lambda^k)=f(x)+\displaystyle \sum_{j=1}^p \lambda^k_jg_j(x)$$
 is $L_0+\displaystyle \sum_{j=1}^p \lambda^k_jL_j$-weakly convex. In view of (\ref{eq:lambdainorm}) and $\lambda^1=0$, we have $\lambda^k_j \leq \gamma_2 k \sigma$. This observation yields the result. \hfill $\Box$
 \begin{lemma} \label{lemaugP}
Let Assumptions (A2)-(A4) and (B3) be satisfied. For $\lambda \ge 0$ and $\sigma > 0$, one has that $x \rightarrow \mathcal{L}_{\sigma}^{k}(x, \lambda)$ is convex over $X_0$ if $\Sigma_{0}^{k} + \sum_{j=1}^{p} (\lambda_{j} + \sigma \gamma_2) \Sigma_{j}^{k}$ is positive semidefinite, where $\gamma_2 = \nu_g + \kappa_g D_0 + \frac{1}{2} \kappa_\Sigma D_0^2$.
\end{lemma}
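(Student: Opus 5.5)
The plan is to reduce convexity of $x\mapsto\mathcal{L}^k_\sigma(x,\lambda)$ on the convex set $X_0$ to convexity of its restriction to line segments in $X_0$. Since $X_0$ is convex, it suffices to show that for any $x,y\in X_0$ the scalar function $\varphi(t)=\mathcal{L}^k_\sigma(x+t(y-x),\lambda)$, $t\in[0,1]$, is convex. The term $-\|\lambda\|^2/(2\sigma)$ is constant and can be ignored. The quadratic $q^k_0$ contributes $\langle \Sigma^k_0 d,d\rangle$ with $d=y-x$ to $\varphi''$. The only nonsmooth pieces are $h_j(x)=\frac{1}{2\sigma}[\lambda_j+\sigma q^k_j(x)]_+^2$. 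Each $h_j$ is $C^1$ with gradient $[\lambda_j+\sigma q^k_j(x)]_+\nabla q^k_j(x)$, and this gradient is locally Lipschitz. Hence $\varphi'$ is locally Lipschitz, so it is absolutely continuous and differentiable almost everywhere. Then $\varphi$ is convex as soon as $\varphi''\ge 0$ almost everywhere, because this makes $\varphi'$ nondecreasing.

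Next I would compute $\varphi''$ at points of differentiability. Where $\lambda_j+\sigma q^k_j>0$ we get
\[
h_j''=\sigma\langle \nabla q^k_j,d\rangle^2+[\lambda_j+\sigma q^k_j]_+\langle\Sigma^k_j d,d\rangle .
\]
Where $\lambda_j+\sigma q^k_j<0$ we get $h_j''=0$. The set where $\lambda_j+\sigma q^k_j=0$ with $\varphi'$ differentiable can be handled by noting that there $\frac{d}{dt}$ of $[\cdot]_+\langle\nabla q_j,d\rangle$ equals $\theta\sigma\langle\nabla q_j,d\rangle^2$ for some $\theta\in[0,1]$, which is nonnegative. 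In every case the first term is nonnegative. So
\[
\varphi''(t)\ \ge\ \langle \Sigma^k_0 d,d\rangle+\sum_{j=1}^p c_j(t)\langle \Sigma^k_j d,d\rangle,\qquad c_j(t)=[\lambda_j+\sigma q^k_j(x+td)]_+ .
\]

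The key step, and the only place the constant $\gamma_2$ enters, is the uniform bound $0\le c_j(t)\le \lambda_j+\sigma\gamma_2$ on $X_0$. For $z\in X_0$ (with $x^k\in X_0$), Assumptions (A3), (A4), (A2) and (B3) give
\[
q^k_j(z)\le |G_j(x^k,\xi_k)|+\kappa_g\|z-x^k\|+\tfrac12\kappa_\Sigma\|z-x^k\|^2\le \nu_g+\kappa_gD_0+\tfrac12\kappa_\Sigma D_0^2=\gamma_2 .
\]
Since $\lambda_j\ge0$, this yields $c_j\le\lambda_j+\sigma\gamma_2$. Because $\Sigma^k_j$ is negative semidefinite by (B3), $\langle\Sigma^k_j d,d\rangle\le0$. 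Replacing $c_j$ by its upper bound therefore only decreases the right-hand side, and we obtain
\[
\varphi''(t)\ge\Big\langle\Big(\Sigma^k_0+\sum_j(\lambda_j+\sigma\gamma_2)\Sigma^k_j\Big)d,d\Big\rangle\ge0
\]
by the hypothesis. This gives convexity.

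The main obstacle I expect is the lack of twice differentiability of $[\cdot]_+^2$. I would handle it by the segment argument above, using absolute continuity of $\varphi'$. Alternatively, one can prove monotonicity of $\nabla\mathcal{L}^k_\sigma$ directly, by integrating along the segment. A minor point is ensuring $x^k\in X_0$, which holds for $k\ge2$ by construction. For $k=1$ one either takes $x^1\in X_0$ or replaces $D_0$ by the corresponding diameter bound.
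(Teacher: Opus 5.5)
Your proof is correct. Its overall skeleton matches the paper's: both reduce to the effective curvature $\Sigma^k_0+\sum_j[\lambda_j+\sigma q^k_j]_+\Sigma^k_j$, bound $q^k_j\le\gamma_2$ on $X_0$, and use $\Sigma^k_j\preceq 0$ to replace the coefficients by $\lambda_j+\sigma\gamma_2$.

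Where you differ is in how that effective curvature is obtained.
\begin{itemize}
\item The paper writes $\mathcal{L}^k_\sigma(x,\lambda)=\min_{y\le 0}\hat{\phi}(x,y,\lambda)$ with a slack variable. It takes the Schur complement of the $\sigma I$ block of the joint Hessian, which removes the Gauss--Newton term $\sigma\mathcal{J}q^k(x)^T\mathcal{J}q^k(x)$. It then evaluates this at the minimizing slack $y^*_j=\min(0,q^k_j(x)+\lambda_j/\sigma)$.
\item You instead differentiate twice along segments and discard $\sigma\langle\nabla q^k_j,d\rangle^2\ge 0$ by hand. You handle the fact that $[\cdot]_+^2$ is only $C^{1,1}$ via absolute continuity of $\varphi'$.
\end{itemize}

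The paper's route is shorter and explains structurally why the rank-one terms can be dropped. Taken literally, however, it is not a complete argument. First, $\hat{\phi}$ is not jointly convex on $X_0\times\mathbb{R}^p_-$: its Schur complement becomes indefinite as $y_j\to-\infty$ whenever $\Sigma^k_j$ has a negative eigenvalue. So convexity of the partial minimum does not follow from a Hessian condition at $y^*$ alone. Second, the missing second derivative of $[\cdot]_+^2$ is not addressed. What actually justifies the paper's step is your computation: wherever the second derivative exists, it equals the Schur-complement expression plus $\sigma\sum_{j:\,c_j>0}\langle\nabla q^k_j,d\rangle^2$. Your remark that $x^k\in X_0$ is needed for the $D_0$ bound (relevant only at $k=1$) applies to the paper's proof as well.

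One small tidy-up concerns your $\theta$-argument, which is not quite well posed. Set $r_j(t)=\lambda_j+\sigma q^k_j(x+td)$. Differentiability of $\varphi'$ at $t$ does not give differentiability of the individual summand $[r_j]_+\langle\nabla q^k_j,d\rangle$. At a transversal zero of $r_j$, its one-sided derivatives are $0$ and $\sigma\langle\nabla q^k_j,d\rangle^2$. It is simpler to note that $r_j$ is a polynomial of degree at most two in $t$. Either it vanishes identically, and then the $j$-th term is zero, or it has at most two zeros, which form a null set you can discard.
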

{\bf Proof}.
Noting that
\begin{equation*}
    \mathcal{L}_{\sigma}^{k}(x, \lambda) = \min_{y \le 0} \hat{\phi}(x, y, \lambda) = q_{0}^{k}(x) + \langle \lambda, q^{k}(x) - y \rangle + \frac{\sigma}{2} \| q^{k}(x) - y \|^{2},
\end{equation*}
we evaluate the second-order derivatives of $\hat{\phi}(x, y, \lambda)$ with respect to $x$ and $y$. Since $\nabla^{2}q_{j}^{k}(x) = \Sigma_{j}^{k}$ for $j=0,1,\ldots,p$, it is easy to obtain
\begin{equation*}
    \nabla_{x,y}^{2}\hat{\phi}(x, y, \lambda) =
    \begin{bmatrix}
    \Sigma_{0}^{k} + \sum_{j=1}^{p} (\lambda_{j} + \sigma(q_{j}^{k}(x) - y_{j})) \Sigma_{j}^{k} + \sigma \mathcal{J}q^{k}(x)^{T} \mathcal{J}q^{k}(x) & -\sigma \mathcal{J}q^{k}(x)^{T} \\
    -\sigma \mathcal{J}q^{k}(x) & \sigma I
    \end{bmatrix}.
\end{equation*}
The Schur complement of $\sigma I$ in $\nabla_{x,y}^{2}\hat{\phi}(x, y, \lambda)$ is
\begin{equation*}
    \nabla_{xx}^{2}\hat{\phi}(x,y,\lambda) - (-\sigma\mathcal{J}q^{k}(x)^{T})(\sigma I)^{-1}(-\sigma\mathcal{J}q^{k}(x)) = \Sigma_{0}^{k} + \sum_{j=1}^{p} \left( \lambda_{j} + \sigma(q_{j}^{k}(x) - y_{j}) \right) \Sigma_{j}^{k}.
\end{equation*}
For the profiled function $\mathcal{L}_{\sigma}^{k}(x, \lambda)$ to be convex, we evaluate this Schur complement at the optimal $y$ that achieves the minimum. For a given $x$, the minimizer $y^{*} \le 0$ is given component-wise by
\begin{equation*}
    y_{j}^{*} = \min\left(0, q_{j}^{k}(x) + \frac{\lambda_{j}}{\sigma}\right).
\end{equation*}
Substituting $y_{j}^{*}$ into the coefficient yields
\begin{equation*}
    \lambda_{j} + \sigma(q_{j}^{k}(x) - y_{j}^{*}) = \max\left(0, \lambda_{j} + \sigma q_{j}^{k}(x)\right) = [\lambda_{j} + \sigma q_{j}^{k}(x)]_{+}.
\end{equation*}
Thus, the exact condition for convexity at $x$ is that $\Sigma_{0}^{k} + \sum_{j=1}^{p} [\lambda_{j} + \sigma q_{j}^{k}(x)]_{+} \Sigma_{j}^{k}$ is positive semidefinite.

To ensure global convexity over $X_0$, we introduce the uniform upper bound for $q_j^k(x)$. From the definition of $q_j^k(x)$ and Assumptions (A2)-(A4) and (B3), we have for any $x \in X_0$:
\begin{align*}
    q_{j}^{k}(x) &= G_{j}(x^{k}, \xi_{k}) + \langle \nabla_{x} G_{j}(x^{k}, \xi_{k}), x - x^{k} \rangle + \frac{1}{2} \langle \Sigma_{j}^{k} (x - x^{k}), x - x^{k} \rangle \\
    &\le \nu_g + \kappa_g D_0 + \frac{1}{2} \kappa_\Sigma D_0^2 = \gamma_2.
\end{align*}
Since $\lambda_j \ge 0$ and $\sigma > 0$, we have $0 \le [\lambda_{j} + \sigma q_{j}^{k}(x)]_{+} \le \lambda_{j} + \sigma \gamma_2$.

By Assumption (B3), $\Sigma_{j}^{k}$ is negative semidefinite ($\Sigma_{j}^{k} \preceq 0$). Multiplying a negative semidefinite matrix by a larger non-negative scalar yields a "more negative" matrix in the Loewner order, which implies:
\begin{equation*}
    [\lambda_{j} + \sigma q_{j}^{k}(x)]_{+} \Sigma_{j}^{k} \succeq (\lambda_{j} + \sigma \gamma_2) \Sigma_{j}^{k}.
\end{equation*}
Summing over $j$ and adding $\Sigma_{0}^{k}$, we obtain:
\begin{equation*}
    \Sigma_{0}^{k} + \sum_{j=1}^{p} [\lambda_{j} + \sigma q_{j}^{k}(x)]_{+} \Sigma_{j}^{k} \succeq \Sigma_{0}^{k} + \sum_{j=1}^{p} (\lambda_{j} + \sigma \gamma_2) \Sigma_{j}^{k}.
\end{equation*}
Therefore, if $\Sigma_{0}^{k} + \sum_{j=1}^{p} (\lambda_{j} + \sigma \gamma_2) \Sigma_{j}^{k} \succeq 0$, the Schur complement is positive semidefinite for all $x \in X_0$, making $x \rightarrow \mathcal{L}_{\sigma}^{k}(x, \lambda)$ convex over $X_0$.
\hfill $\Box$
 \begin{proposition}\label{corPosiSig}
 If (A1)-(A4), (A6) hold,
  $-(L_i+1) I \prec \Sigma^k_i \prec -L_i I$ for $i=1,\ldots p$, $\kappa_{\Sigma}=\max\{L_1,\ldots, L_p\}+1$,
  and
 \begin{equation}\label{eq:Sig0}
 \Sigma^k_0 =-\displaystyle \sum_{i=1}^p\lambda^k_i \Sigma^k_i+\tau I
 \end{equation}
 for  $\tau >p\kappa_{\Sigma}\gamma_2\sigma$ with $\gamma_2 = \nu_g + \kappa_g D_0 + \frac{1}{2} \kappa_\Sigma D_0^2$, then  Assumptions (B1) -- (B4) hold.
 \end{proposition}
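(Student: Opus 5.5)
We verify the four conditions (B1)--(B4) in turn, starting with the ones that follow directly from the choice of the matrices. For (B3), the hypothesis $-(L_i+1)I \prec \Sigma^k_i \prec -L_i I$ with $L_i>0$ gives $\Sigma^k_i \prec 0$, hence negative semidefinite. The spectrum of $\Sigma^k_i$ lies in $(-(L_i+1),-L_i)$, so $\|\Sigma^k_i\|\le L_i+1\le \kappa_\Sigma$.

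For (B1), we use that $\lambda^k\ge 0$, because every multiplier is a projection onto $\mathbb R^p_+$ and $\lambda^1=0$. Then $-\lambda^k_i\Sigma^k_i\succeq 0$ for each $i$, and so $\Sigma^k_0 = -\sum_i\lambda^k_i\Sigma^k_i+\tau I \succeq \tau I \succ 0$. This gives (B1) with $\lambda_{\min}(\Sigma^k_0)\ge\tau$.

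For (B2), we use weak convexity from (A6). Since $G_i(\cdot,\xi_k)$ is $L_i$-weakly convex and differentiable (by (A4)), we have
$$G_i(x,\xi_k)\ge G_i(x^k,\xi_k)+\langle\nabla_xG_i(x^k,\xi_k),x-x^k\rangle-\frac{L_i}{2}\|x-x^k\|^2 .$$
Because $\Sigma^k_i\prec -L_iI$, we also have $\frac12\langle\Sigma^k_i(x-x^k),x-x^k\rangle\le -\frac{L_i}{2}\|x-x^k\|^2$. Combining the two inequalities yields $q^k_i(x)\le G_i(x,\xi_k)$ on ${\cal O}_0$. The only point to be careful about is the convention for weak convexity: we take it to mean that $G_i+\frac{L_i}{2}\|\cdot\|^2$ is convex, and this convention matches the factor $\frac12$ in $q_i^k$.

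The main step is (B4), which we obtain from Lemma \ref{lemaugP} with $\lambda=\lambda^k\ge0$. It suffices to show that $\Sigma^k_0+\sum_j(\lambda^k_j+\sigma\gamma_2)\Sigma^k_j\succeq 0$. Substituting (\ref{eq:Sig0}), the $\lambda^k$-terms cancel exactly, which is the reason for this choice of $\Sigma^k_0$. What remains is $\tau I+\sigma\gamma_2\sum_j\Sigma^k_j$. Since $\Sigma^k_j\succeq -(L_j+1)I\succeq-\kappa_\Sigma I$, this matrix is $\succeq (\tau-p\kappa_\Sigma\gamma_2\sigma)I\succ0$ by the assumption $\tau>p\kappa_\Sigma\gamma_2\sigma$. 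Lemma \ref{lemaugP} then gives convexity of ${\cal L}^k_\sigma(\cdot,\lambda^k)$ over $X_0$.

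The hypotheses of Lemma \ref{lemaugP} need (A2)--(A4), which we assume, and (B3), which we have already established. So (B4) follows, and with it all of (B1)--(B4).
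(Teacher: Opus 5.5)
Your proof is correct and takes the route the paper intends. (B1) and (B3) follow from the spectral bounds on $\Sigma^k_i$ and $\lambda^k\ge 0$, (B2) from the weak-convexity lower model, and (B4) from Lemma \ref{lemaugP}, where the choice of $\Sigma^k_0$ cancels the $\lambda^k$-terms and leaves $\tau I+\sigma\gamma_2\sum_j\Sigma^k_j\succeq(\tau-p\kappa_\Sigma\gamma_2\sigma)I\succ 0$; the paper states the proposition without a separate proof, and your argument supplies the verification it leaves implicit.
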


In the following, we will discuss properties of the sequence generated by PMQSopt, which are related to the following three measures:
$$
\displaystyle \frac{1}{T}\sum_{k=1}^T \mathbb E R_{\alpha}(x^k,\lambda^k),\,\quad
\displaystyle \frac{1}{T}\sum_{k=1}^T \mathbb E g(x^k),\,\quad -\displaystyle \frac{1}{T}\mathbb E\left( \displaystyle \sum_{t=1}^T\langle \lambda^t, g(x^t) \rangle\right)
$$
or
$$
\mathbb E_{R,\xi_{[T]}} R_{\alpha}(x^{R},\lambda^{R}),\quad \,
\mathbb E_{R, \xi_{[T]}} g(x^{R}),\quad \,
\mathbb E_{R, \xi_{[T]}} \langle \lambda^{R}, g(x^{R}).
$$
For a given $\lambda^t$, define
\begin{equation}\label{defphit}
\phi^t(x)=L(x,\lambda^t)+\delta_{X_0}(x).
\end{equation}
It follows from \cite[Theorem 4.5]{Drusvyatskiy2018} that
\begin{equation}\label{eq:re-ms}
\displaystyle \frac{1}{4}\|\nabla \phi^t_{1/\alpha}(x)\|\leq \|R_{\alpha/2}(x)\|
\leq \displaystyle \frac{3}{2}\left(1+\displaystyle \frac{1}{\sqrt{2}}\right)\|\nabla \phi^t_{1/\alpha}(x)\|, \, \forall x \in {\cal O}_0.
\end{equation}
Thus, instead of using $\|R_{\alpha/2}(x^t, \lambda^t)\|$, we may use  $\|\nabla \phi^t_{1/\alpha}(x^t)\|$ to measure the discrepancy of $-\nabla L(x^k,\lambda^k)$ from $N_{X_0}(x^k)$.

\begin{theorem}\label{th:L-est}
If (A1)-(A4), (A6) hold,
  $-(L_i+1) I \prec \Sigma^k_i \prec -L_i I$ for $i=1,\ldots p$, $\kappa_{\Sigma}=\max\{L_1,\ldots, L_p\}+1$,
  and
 \begin{equation}\label{eq:Sig0s}
 \Sigma^k_0 =-\displaystyle \sum_{j=1}^p \lambda^k_j\Sigma^k_j+\tau I
 \end{equation}
 for some positive number $\tau >p\kappa_{\Sigma}\gamma_2\sigma$,  where $\gamma_2 = \nu_g + \kappa_g D_0 + \frac{1}{2} \kappa_\Sigma D_0^2$.
 Suppose
\begin{equation}\label{eqal1}
\alpha>  2\beta_k(\sigma),
\end{equation}
Then
\begin{equation}\label{eqfinalesti}
\begin{array}{ll}
 \|\nabla\phi^t_{1/\alpha}(x^t)\|^2
& \leq {4(\alpha+\tau)} \left[ \phi^t_{1/\alpha}(x^t)-\phi^{t+1}_{1/\alpha}(x^{t+1})
\right]+\displaystyle {4(\alpha+\tau)}\nu_g\gamma_1\sigma\\[8pt]
&\quad  + 4\alpha \sigma D_0\gamma_2p[\kappa_g+\kappa_{\Sigma}D_0]+\displaystyle \frac{2\alpha }{ \alpha+\tau}[\kappa_f+\sqrt{p}\kappa_g\|\lambda^t\|+\sigma \gamma_2p(\kappa_g+\kappa_{\Sigma}D_0)]^2.
\end{array}
\end{equation}
\end{theorem}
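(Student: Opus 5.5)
This is the standard Davis--Drusvyatskiy Moreau-envelope descent argument, adapted to the proximal subproblem (\ref{xna}) with multiplier changes. Fix $t$ and let $\hat x^t=\operatorname{prox}_{\phi^t/\alpha}(x^t)$, i.e. the minimizer of $\phi^t(y)+\frac{\alpha}{2}\|y-x^t\|^2$. By Lemma~\ref{eq:bk}, $L(\cdot,\lambda^t)$ is $\beta_t(\sigma)$-weakly convex, and $\alpha>2\beta_t(\sigma)$ by (\ref{eqal1}). Hence $\hat x^t$ is well defined, and $\nabla\phi^t_{1/\alpha}(x^t)=\alpha(x^t-\hat x^t)$. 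The goal is therefore to bound $\alpha^2\|x^t-\hat x^t\|^2$.

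\textbf{Step 1: one-step descent.} Since $x^{t+1}\in X_0$,
$$
\phi^t_{1/\alpha}(x^{t+1})\le \phi^t(\hat x^t)+\frac{\alpha}{2}\|\hat x^t-x^{t+1}\|^2 .
$$
By Proposition~\ref{corPosiSig}, (B1)--(B4) hold, so the subproblem in (\ref{xna}) is $(\alpha+\tau)$-strongly convex. I will apply Lemma~\ref{lem:opt-x} in its strongly convex form, as in the proof of Lemma~\ref{lem:l5}, with $z=\hat x^t$. This gives
$$
\frac{\alpha+\tau}{2}\|\hat x^t-x^{t+1}\|^2\le \Big[{\cal L}^t_\sigma(\hat x^t,\lambda^t)+\tfrac{\alpha}{2}\|\hat x^t-x^t\|^2\Big]-\Big[{\cal L}^t_\sigma(x^{t+1},\lambda^t)+\tfrac{\alpha}{2}\|x^{t+1}-x^t\|^2\Big].
$$

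\textbf{Step 2: replace the model by the true Lagrangian.}
\begin{itemize}
\item By (B2), $\Sigma^t_i\preceq -L_iI$ and weak convexity (A6), we have $q^t_i\le G_i(\cdot,\xi_t)$. Together with $[a]_+^2\le a^2$, this gives ${\cal L}^t_\sigma(\hat x^t,\lambda^t)\le q^t_0(\hat x^t)+\langle\lambda^t,G(\hat x^t,\xi_t)\rangle+\frac{\sigma}{2}\nu_g^2$.
\item On the other side, I write $\frac{1}{2\sigma}(\|\lambda^{t+1}\|^2-\|\lambda^t\|^2)\ge\langle\lambda^t,q^t(x^{t+1})\rangle$ up to an $O(\sigma\gamma_2)$ error controlled by (\ref{eq:lambdainorm}).
\item The choice $\Sigma^t_0=-\sum\lambda^t_j\Sigma^t_j+\tau I$ makes the quadratic parts cancel against $\sum\lambda^t_jq^t_j$. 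What remains is a linearization of ${\cal L}(\cdot,\lambda^t;\xi_t)$ at $x^t$ plus $\frac{\tau}{2}\|\cdot-x^t\|^2$.
\end{itemize}
Take conditional expectation given $\xi_{[t-1]}$. The linear term $\langle\nabla_x{\cal L}(x^t,\lambda^t;\xi_t),\hat x^t-x^t\rangle$ has expectation $\langle\nabla_xL(x^t,\lambda^t),\hat x^t-x^t\rangle$, because $\hat x^t$ and $\lambda^t$ are $\xi_{[t-1]}$-measurable. Weak convexity then bounds this by $\phi^t(\hat x^t)-\phi^t(x^t)+\frac{\beta_t}{2}\|\hat x^t-x^t\|^2$. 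The term at $x^{t+1}$ involves $\xi_t$, so I will bound it by Cauchy--Schwarz and Young using the gradient bound $\kappa_f+\sqrt p\kappa_g\|\lambda^t\|+\sigma\gamma_2p(\kappa_g+\kappa_\Sigma D_0)$. This produces the last term $\frac{2\alpha}{\alpha+\tau}[\cdots]^2$. The cross terms from $\lambda^{t+1}-\lambda^t$, bounded via Lemma~\ref{lem:1} and $D_0$, give $4\alpha\sigma D_0\gamma_2p[\kappa_g+\kappa_\Sigma D_0]$.

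\textbf{Step 3: telescope.} Use $\phi^t_{1/\alpha}(x^t)=\phi^t(\hat x^t)+\frac{\alpha}{2}\|\hat x^t-x^t\|^2$. Since $\phi^{t+1}-\phi^t=\langle\lambda^{t+1}-\lambda^t,g\rangle$, (\ref{eq:2lambda}) and (A3) give $|\phi^{t+1}_{1/\alpha}(x^{t+1})-\phi^t_{1/\alpha}(x^{t+1})|\le\nu_g\gamma_1\sigma$; this yields the $\nu_g\gamma_1\sigma$ term. Combining Steps 1 and 2 and using $\alpha-\beta_t\ge\alpha/2$, I expect an inequality of the form
$$
\frac{\alpha}{4}\|\hat x^t-x^t\|^2\cdot\frac{\alpha}{\alpha+\tau}\lesssim \phi^t_{1/\alpha}(x^t)-\phi^{t+1}_{1/\alpha}(x^{t+1})+\text{errors}.
$$
Multiplying by $4(\alpha+\tau)/\alpha\cdot\alpha$ then gives (\ref{eqfinalesti}).

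\textbf{Main obstacle.} The hardest part is Step 2's bookkeeping. The augmented term $\frac{1}{2\sigma}[\cdot]_+^2$ at $x^{t+1}$ must be converted into $\langle\lambda^t,G\rangle$ with only $O(\sigma)$ error, and the extra $\tau$-proximal weight must not destroy the descent. I would first try to work entirely with the strongly convex three-point inequality of Lemma~\ref{lem:opt-x}, modulus $(\alpha+\tau)$, and absorb $\frac{\tau}{2}\|\hat x^t-x^t\|^2$ into the $(\alpha+\tau)$ prefactor. This would explain why the final constants carry $4(\alpha+\tau)$.
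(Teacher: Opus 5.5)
Your route is genuinely different from the paper's and can be made to work, but two steps fail as you wrote them, and the constants you quote do not come from your argument.

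\textbf{The strong-convexity step and the paper's route.} The inequality your route rests on is not justified as stated. (B1)--(B4) give only convexity of ${\cal L}^t_\sigma(\cdot,\lambda^t)$, not $(\alpha+\tau)$-strong convexity of the subproblem in (\ref{xna}). Substituting $\Sigma^t_0=-\sum_j\lambda^t_j\Sigma^t_j+\tau I$ into the Schur-complement computation of Lemma \ref{lemaugP} gives curvature only at least $\tau-p\kappa_\Sigma\gamma_2\sigma$, and the deficit is real. With $p=1$, $\lambda^t=0$, $G_1(x^t,\xi_t)>0$ and $\nabla_xG_1(x^t,\xi_t)=0$, the Hessian of ${\cal L}^t_\sigma(\cdot,0)$ at $x^t$ is $\tau I+\sigma G_1(x^t,\xi_t)\Sigma^t_1\prec\tau I$. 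So your three-point inequality at $z=\hat x^t$ needs an extra $\frac{p\kappa_\Sigma\gamma_2\sigma}{2}\|\hat x^t-x^{t+1}\|^2$ on the right.

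The paper avoids function values of the model altogether:
\begin{itemize}
\item It writes the first-order optimality condition of (\ref{xna}) and replaces $[\lambda^t+\sigma q^t(x^{t+1})]_+$ by $\lambda^{t+1}$.
\item It uses the exact identity $\Sigma^t_0+\sum_j\lambda^t_j\Sigma^t_j+\alpha I=(\alpha+\tau)I$ to obtain $x^{t+1}=\Pi_{X_0}(x^t-(\alpha+\tau)^{-1}(\nabla_x{\cal L}(x^t,\lambda^t;\xi_t)+\widehat\Delta_t))$, where $\widehat\Delta_t=\sum_i(\lambda^{t+1}_i-\lambda^t_i)[\nabla_xG_i(x^t,\xi_t)+\Sigma^t_i(x^{t+1}-x^t)]$.
\item It then runs the Davis--Drusvyatskiy argument via nonexpansiveness of $\Pi_{X_0}$.
\end{itemize}
The contraction factor is thus exactly $\alpha+\tau$, and all $\sigma$-errors sit in $\widehat\Delta_t$. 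The displayed terms $4\alpha\sigma D_0\gamma_2p[\kappa_g+\kappa_\Sigma D_0]$ and $\sigma\gamma_2p(\kappa_g+\kappa_\Sigma D_0)$ are precisely the bounds on $4\alpha\langle\hat x^t-x^t,\widehat\Delta_t\rangle$ and $\|\widehat\Delta_t\|$ obtained from (\ref{eq:lambdainorm}).

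\textbf{Your first and third bullets conflict.} If you replace $q^t_i(\hat x^t)$ by $G_i(\hat x^t,\xi_t)$, the $\lambda^t_j\Sigma^t_j$ part of $\Sigma^t_0$ no longer cancels. You should keep $q^t$ at $\hat x^t$ and use $[a]_+^2\le a^2$ together with $|q^t_i|\le\gamma_2$ on $X_0$. This costs an error of $\frac{\sigma}{2}p\gamma_2^2$.

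\textbf{Your constants are borrowed.} In your route no $\lambda^{t+1}-\lambda^t$ cross terms ever appear. The bound $\frac{1}{2\sigma}(\|\lambda^{t+1}\|^2-\|\lambda^t\|^2)\ge\langle\lambda^t,q^t(x^{t+1})\rangle$ holds with no error, by convexity of $[\cdot]_+^2$. Your Young step involves only $\nabla_x{\cal L}(x^t,\lambda^t;\xi_t)$. So the constants you attribute to Lemma \ref{lem:1} come from the paper's route, not yours. Once repaired, your argument gives an estimate of the same shape and order. Its $O(\alpha\sigma)$ term is about $2\alpha\sigma p\gamma_2(\gamma_2+\kappa_\Sigma D_0^2)$, which contains $\nu_g$ and is not bounded by the displayed terms in general. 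That suffices for Proposition \ref{prop:regrets} up to constants, but it is not (\ref{eqfinalesti}) verbatim.

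\textbf{Conditional expectation.} You are right to take $\mathbb E[\cdot\mid\xi_{[t-1]}]$ before using weak convexity. What you then prove is $\|\nabla\phi^t_{1/\alpha}(x^t)\|^2\le\mathbb E[\mbox{right-hand side}\mid\xi_{[t-1]}]$, not the pathwise inequality. The paper reaches the pathwise form only by writing ``$=$'' when it replaces $\nabla_x{\cal L}(x^t,\lambda^t;\xi_t)$ with $\nabla_xL(x^t,\lambda^t)$ inside $\langle\hat x^t-x^t,\cdot\rangle$. That leaves an unaccounted martingale-difference term, so you should state the conditional version explicitly.
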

{\bf Proof}. From the necessary optimality condition for (\ref{xna}), we have
$$
0 \in \nabla q^t_0(x^{t+1})+{\cal J}q^t(x^{t+1})^T[\lambda^t+\sigma q^t(x^{t+1})]_++\alpha (x^{t+1}-x^t)+N_{X_0}(x^{t+1}).
$$
In terms of the definition of $\lambda^{t+1}$, we have
$$
0 \in \nabla q^t_0(x^{t+1})+{\cal J}q^t(x^{t+1})^T\lambda^{t+1}+\alpha (x^{t+1}-x^t)+N_{X_0}(x^{t+1})
$$
or
\begin{equation}\label{hel1}
0 \in \nabla_x F(x^t,\xi_t)+\displaystyle \sum_{i=1}^p
\lambda^{t}_i \nabla_xG_i(x^t,\xi_t)+\Delta_t
+N_{X_0}(x^{t+1}).
\end{equation}
where
$$
\begin{array}{ll}
\Delta_t & =
\left[\Sigma^t_0+\alpha I+\displaystyle \sum_{j=1}^p \lambda^t_j \Sigma^t_j\right](x^{t+1}-x^t)+\displaystyle \sum_{i=1}^p
[\lambda^{t+1}_i-\lambda^t_i] [\nabla_xG_i(x^t,\xi_t)+\Sigma^t_i(x^{t+1}-x^t)]\\[6pt]
&=(\alpha+\tau)(x^{t+1}-x^t)+\widehat \Delta_t
\end{array}
$$
and
$$
\widehat \Delta_t=\displaystyle \sum_{i=1}^p
[\lambda^{t+1}_i-\lambda^t_i] [\nabla_xG_i(x^t,\xi_t)+\Sigma^t_i(x^{t+1}-x^t)].
$$
Then we may rewrite (\ref{hel1}) as
\begin{equation}\label{hel1a}
-\nabla_x {\cal L}(x^t,\lambda^t;\xi_t)-(\alpha+\tau)(x^{t+1}-x^t)-\widehat \Delta_t
\in N_{X_0}(x^{t+1}).
\end{equation}
The inclusion (\ref{hel1a}) is equivalent to
\begin{equation}\label{hel1b}
\begin{array}{ll}
x^{t+1}& =\Pi_{X_0}\left(x^{t+1}+(\alpha+\tau)^{-1}\left[-\nabla_x {\cal L}(x^t,\lambda^t;\xi_t)-(\alpha+\tau)(x^{t+1}-x^t)-\widehat \Delta_t\right]\right)\\[6pt]
& =\Pi_{X_0}\left(x^t-(\alpha+\tau)^{-1} (\nabla_x{\cal L}(x^t,\lambda^t;\xi_t)+\widehat \Delta_t)\right).
\end{array}
\end{equation}
For $\hat x^t={\rm prox}_{\phi^t/\alpha}(x^t)$, then
\begin{equation}\label{hel1c}
\begin{array}{ll}
 \left[\phi^t_{1/\alpha}(x^{t+1})  \right] &
= \displaystyle \inf_z \left\{ \phi^t(z)+\displaystyle
\frac{\alpha}{2}\|z-x^{t+1}\|^2  \right\}\\[6pt]
& \leq \left\{\phi^t_{1/\alpha}(\hat x^t)+\displaystyle
\frac{\alpha}{2}\|\hat x^t-x^{t+1}\|^2  \right\}\\[6pt]
&= \phi^t(\hat x^t)+\displaystyle
\frac{\alpha}{2} \|\Pi_{X_0}\left(x^t-(\alpha+\tau)^{-1} (\nabla_x{\cal L}(x^t,\lambda^t;\xi_t)+\widehat \Delta_t)\right)-\Pi_{X_0}(\hat x^t)\|^2\\[6pt]
& \leq \phi^t(\hat x^t)+\displaystyle
\frac{\alpha}{2} \|x^t-(\alpha+\tau)^{-1} (\nabla_x{\cal L}(x^t,\lambda^t;\xi_t)+\widehat \Delta_t)-\hat x^t\|^2\\[6pt]
&= \phi^t(\hat x^t)+\displaystyle
\frac{\alpha}{2}\|x^t-\hat x^t\|^2 +(\alpha+\tau)^{-1} \alpha  \langle \hat x^t-x^t,\nabla_x{\cal L}(x^t,\lambda^t;\xi_t)+\widehat \Delta_t\rangle\\[6pt]
& \quad \quad + \displaystyle \frac{\alpha (\alpha+\tau)^{-2}}{2}\|\nabla_x{\cal L}(x^t,\lambda^t;\xi_t)+\widehat \Delta_t\|^2\\[6pt]
&= \phi^t_{1/\alpha}(x^t) +(\alpha+\tau)^{-1} \alpha  \langle \hat x^t-x^t,\nabla_x L(x^t,\lambda^t)\rangle\\[6pt]
& \quad \quad +(\alpha+\tau)^{-1} \alpha \langle \hat x^t-x^t, \widehat \Delta_t\rangle+ \displaystyle \frac{\alpha (\alpha+\tau)^{-2}}{2} \|\nabla_x{\cal L}(x^t,\lambda^t;\xi_t)+\widehat \Delta_t\|^2\\[6pt]
&\leq \phi^t_{1/\alpha}(x^t) +(\alpha+\tau)^{-1} \alpha  \left[L(\hat x^t,\lambda^t)-L(x^t,\lambda^t)+\displaystyle
\frac{\beta_t(\sigma)}{2}\|\hat x^t-x^t\|^2 \right]\\[6pt]
& \quad \quad +(\alpha+\tau)^{-1} \alpha \langle \hat x^t-x^t, \widehat \Delta_t\rangle+ \displaystyle \frac{\alpha (\tau+\alpha)^{-2}}{2} \|\nabla_x{\cal L}(x^t,\lambda^t;\xi_t)+\widehat \Delta_t\|^2
\end{array}
\end{equation}
Since $ x \rightarrow L(x,\lambda^t)+\displaystyle \frac{\alpha}{2}\|x-x^t\|^2$ is $\alpha-\beta_k(\sigma)$-strongly convex, we have
$$
\begin{array}{l}
L(x^t,\lambda^t)-L(\hat x^t,\lambda^t)-\displaystyle
\frac{\beta_t(\sigma)}{2}\|\hat x^t-x^t\|^2 \\[6pt]
=\left(L(x^t,\lambda^t)+\displaystyle \frac{\alpha}{2}\|x^t-x^t\|^2\right)-\left(L(\hat x^t,\lambda^t)+\displaystyle \frac{\alpha}{2}\|\hat x^t-x^t\|^2\right)+\displaystyle
\frac{\alpha-\beta_t(\sigma)}{2}\|\hat x^t-x^t\|^2 \\[6pt]
\geq \displaystyle
\frac{\alpha-\beta_t(\sigma)}{2}\|\hat x^t-x^t\|^2 =\displaystyle \frac{\alpha-\beta_t(\sigma)}{2\alpha^2}\|\nabla \phi^t_{1/\alpha}(x^t)\|^2.
\end{array}
$$
Substituting the above inequality to (\ref{hel1c}), we obtain
\begin{equation}\label{hel1d}
\begin{array}{ll}
 \left[\phi^t_{1/\alpha}(x^{t+1})  \right] \leq &\phi^t_{1/\alpha}(x^t)
-\displaystyle \frac{[\alpha-\beta_t(\sigma)](\alpha+\tau)^{-1}}{2\alpha}\|\nabla \phi^t_{1/\alpha}(x^t)\|^2
 \\[6pt]
& \quad \quad +(\alpha+\tau)^{-1} \alpha \langle \hat x^t-x^t, \widehat \Delta_t\rangle+ \displaystyle \frac{\alpha (\alpha+\tau)^{-2}}{2} \|\nabla_x{\cal L}(x^t,\lambda^t;\xi_t)+\widehat \Delta_t\|^2
\end{array}
\end{equation}
Since $\alpha>  2\beta_k(\sigma)$, (\ref{hel1d}) implies
\begin{equation}\label{eqfinalestia}
\begin{array}{ll}
 \|\nabla\phi^t_{1/\alpha}(x^t)\|^2
& \leq \displaystyle{4(\alpha+\tau)} \left[ \phi^t_{1/\alpha}(x^t)-\phi^{t}_{1/\alpha}(x^{t+1})
\right]\\[8pt]
&\quad  + 4\alpha \langle \hat x^t-x^t, \widehat \Delta_t\rangle+ \displaystyle 2\alpha (\alpha+\tau)^{-1} \|\nabla_x{\cal L}(x^t,\lambda^t;\xi_t)+\widehat \Delta_t\|^2\\[8pt]
&\leq \displaystyle {4(\alpha+\tau)} \left[ \phi^t_{1/\alpha}(x^t)-\phi^{t}_{1/\alpha}(x^{t+1})
\right]\\[8pt]
&\quad  + 4\alpha \sigma D_0\gamma_2p[\kappa_g+\kappa_{\Sigma}D_0]+2\alpha (\alpha+\tau)^{-1}[\kappa_f+\sqrt{p}\kappa_g\|\lambda^t\|+\sigma \gamma_2p(\kappa_g+\kappa_{\Sigma}D_0)]^2.
\end{array}
\end{equation}
Noting for $w^{t+1}={\rm prox}_{\alpha^{-1}\phi_t}(x^{t+1})$, we have
$$
\begin{array}{l}
\phi^{t+1}_{1/\alpha}(x^{t+1})-\phi^{t}_{1/\alpha}(x^{t+1})\\[6pt]
\leq L(w^{t+1},\lambda^{t+1})+\displaystyle \frac{\alpha}{2}
\|w^{t+1}-x^{t+1}\|^2-\left(L(w^{t+1},\lambda^{t})+\displaystyle \frac{\alpha}{2}
\|w^{t+1}-x^{t+1}\|^2  \right)\\[10pt]
=(\lambda^{t+1}-\lambda^t)^Tg(w^{t+1})
\leq \nu_g\|\lambda^{t+1}-\lambda^t\|\leq \nu_g\gamma_1\sigma.
\end{array}
$$
Combing this with (\ref{eqfinalestia}) yields
(\ref{eqfinalesti}). \hfill $\Box$


The following result is crucial for estimating the constraint violation.
\begin{proposition}\label{prop:cregret}
Let $(x^t,\lambda^t)$ be generated by PMQSopt and Assumptions (A2), (A4) and  (B3) be satisfied.  Then for  $i=1,\ldots,p$,
\begin{equation}\label{eq:ccomineq0}
\begin{array}{ll}
\displaystyle \sum_{t=1}^T G_i(x^t,\xi_t)
&\leq
\displaystyle \frac{1}{\sigma} \lambda^{T+1}_i+\left[\kappa_g+\displaystyle \frac{\kappa_{\Sigma}D_0}{2}\right]\sum_{t=1}^T\|x^{t+1}-x^t\|
\end{array}
\end{equation}
and
\begin{equation}\label{eq:ccomineqaa}
\mathbb E \displaystyle \sum_{t=1}^TG_i(x^t,\xi_t)\leq \displaystyle \frac{1}{\sigma}\mathbb E \lambda^{T+1}_i+\left[\kappa_g+\displaystyle \frac{\kappa_{\Sigma}D_0}{2}\right]\sum_{t=1}^T\mathbb E\|x^{t+1}-x^t\|.
\end{equation}
\end{proposition}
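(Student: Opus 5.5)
The proof uses the multiplier update $\lambda_i^{t+1}=[\lambda_i^t+\sigma q^t_i(x^{t+1})]_+$ in the standard way. Since $[a]_+\geq a$, we have
\[
\lambda_i^{t+1}\geq \lambda_i^t+\sigma q^t_i(x^{t+1}),
\]
that is,
\[
q^t_i(x^{t+1})\leq \frac{1}{\sigma}\left(\lambda^{t+1}_i-\lambda^t_i\right).
\]
Summing over $t=1,\ldots,T$, the right-hand side telescopes to $\frac{1}{\sigma}(\lambda^{T+1}_i-\lambda^1_i)=\frac{1}{\sigma}\lambda^{T+1}_i$, because $\lambda^1=0$ by Step 0. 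This gives
\[
\sum_{t=1}^T q^t_i(x^{t+1})\leq \frac{1}{\sigma}\lambda^{T+1}_i .
\]

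Next we relate $q^t_i(x^{t+1})$ to $G_i(x^t,\xi_t)$. By the definition (\ref{eq:qs}),
\[
G_i(x^t,\xi_t)=q^t_i(x^{t+1})-\langle \nabla_xG_i(x^t,\xi_t),x^{t+1}-x^t\rangle-\frac{1}{2}\langle \Sigma^t_i(x^{t+1}-x^t),x^{t+1}-x^t\rangle .
\]
Assumption (A4) bounds the linear term: its absolute value is at most $\kappa_g\|x^{t+1}-x^t\|$.

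For the quadratic term, note that (B3) makes $\Sigma^t_i$ negative semidefinite, so $-\frac12\langle\Sigma^t_i d,d\rangle\geq 0$. This term therefore has the unfavourable sign and must be bounded in absolute value rather than dropped. Using $\|\Sigma^t_i\|\leq\kappa_\Sigma$ together with (A2), which gives $\|x^{t+1}-x^t\|\leq D_0$ since both iterates lie in $X_0$, we get
\[
\frac12\|\Sigma^t_i\|\,\|x^{t+1}-x^t\|^2\leq \frac{\kappa_\Sigma D_0}{2}\|x^{t+1}-x^t\| .
\]
Combining the two bounds,
\[
G_i(x^t,\xi_t)\leq q^t_i(x^{t+1})+\left[\kappa_g+\frac{\kappa_\Sigma D_0}{2}\right]\|x^{t+1}-x^t\| .
\]
Summing this over $t$ and inserting the telescoped bound yields (\ref{eq:ccomineq0}).

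Finally, (\ref{eq:ccomineqaa}) follows by taking expectations on both sides of (\ref{eq:ccomineq0}) and using linearity. All quantities involved are integrable: $G_i$ is bounded by (A3), and $\lambda^{T+1}_i\leq \gamma_2(T+1)\sigma$ by Lemma \ref{lem:1}.

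The argument is purely deterministic, so no real obstacle arises. The only delicate point is handling the sign of the quadratic term correctly, as done above, via the diameter bound $D_0$.
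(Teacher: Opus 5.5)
Your proof is correct and is essentially the paper's argument: drop the positive part in $\lambda^{t+1}_i=[\lambda^t_i+\sigma q^t_i(x^{t+1})]_+$, expand $q^t_i(x^{t+1})$, bound the linear and quadratic terms by $\kappa_g\|x^{t+1}-x^t\|$ and $\frac{\kappa_\Sigma D_0}{2}\|x^{t+1}-x^t\|$ via (A4), (B3) and (A2), and telescope using $\lambda^1=0$. The only addition is your integrability remark, which uses (A3) and Lemma~\ref{lem:1} and so goes beyond the stated hypotheses; the paper simply takes expectations without comment.
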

{\bf Proof}. From the definition $\lambda^{t+1}_i=[\lambda^t_i+\sigma q^t_i(x^{t+1})]_+$, we have  that
$$
\begin{array}{ll}
\lambda^{t+1}_i
&\geq \lambda^t_i+\sigma\left( G_i(x^t,\xi_t)+\langle \nabla_x G_i(x^t,\xi_t), x^{t+1}-x^t\rangle +\displaystyle \frac{1}{2} \left \langle\Sigma^t_i (x^{t+1}-x^t),x^{t+1}-x^t \right \rangle \right)\\[10pt]
& \geq \lambda^t_i+\sigma\left(G_i(x^t,\xi_t)- \|\nabla_x G_i(x^t,\xi_t)\|\|x^{t+1}-x^t\|-\displaystyle \frac{1}{2} \|\Sigma^t_i\|\| x^{t+1}-x^t\|^2\right),
\end{array}
$$
which, from Assumptions (B3) and (A4), implies that
$$
\begin{array}{ll}
\displaystyle \sum_{t=1}^T G_i(x^t,\xi_t)&\leq  \displaystyle \frac{1}{\sigma} \lambda^{T+1}_i+ \sum_{t=1}^T\|\nabla_x G_i(x^t,\xi_t)\|\|x^{t+1}-x^t\|+ \displaystyle \frac{1}{2} \sum_{t=1}^T\|\Sigma^t_i\|\| x^{t+1}-x^t\|^2\\[10pt]
&\leq
\displaystyle \frac{1}{\sigma} \lambda^{T+1}_i+ \kappa_g\sum_{t=1}^T\|x^{t+1}-x^t\|+\displaystyle \frac{\kappa_{\Sigma}D_0}{2}\sum_{t=1}^T\|x^{t+1}-x^t\|,
\end{array}
$$
which is just (\ref{eq:ccomineq0}). From this we obtain
(\ref{eq:ccomineqaa}). \hfill $\Box$\\
The following proposition is about the total complementarity violation.
\begin{proposition}\label{prop:compregret}
Let $(x^t,\lambda^t)$ be generated by PMQSopt and Assumptions (A3), (A4) and (B1), (B3) and (B4)be satisfied. Then
\begin{equation}\label{eq:comineq0}
\begin{array}{ll}
-\displaystyle \sum_{t=1}^T\langle \lambda^t, G(x^t,\xi_t)\rangle
&\leq
\displaystyle \frac{1}{2\sigma} [\|\lambda^1\|^2-\|\lambda^{T+1}\|^2]+\displaystyle \frac{\sigma}{2}\sum_{t=1}^T\|G(x^t,\xi_t)\|^2+
\displaystyle \frac{1}{2\alpha}\sum_{t=1}^T\|\nabla_x F(x^t,\xi^t)\|^2
\end{array}
\end{equation}
and
\begin{equation}\label{eq:comcineqb}
-\mathbb E\left( \displaystyle \sum_{t=1}^T\langle \lambda^t, g(x^t) \rangle\right)\leq \displaystyle \frac{\sigma}{2}\nu_g^2T+\displaystyle \frac{1}{2\alpha}\kappa_f^2T.
\end{equation}
\end{proposition}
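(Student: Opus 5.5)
We need to prove Proposition prop:compregret. The natural tool is Lemma \ref{lem:opt-x} with $z=x^t$, i.e. inequality (\ref{eq:opt-x-2}).

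\textbf{Step 1: per-iteration bound.} For each $t$, (\ref{eq:opt-x-2}) gives
\[
\langle \nabla_x F(x^t,\xi_t),x^{t+1}-x^t\rangle+\frac{1}{2\sigma}\|\lambda^{t+1}\|^2+\alpha\|x^{t+1}-x^t\|^2+\frac12\|x^{t+1}-x^t\|^2_{\Sigma^t_0}\le \frac{1}{2\sigma}\sum_{i=1}^p[\lambda^t_i+\sigma G_i(x^t,\xi_t)]_+^2 .
\]
Under (B1) the $\Sigma^t_0$ term is nonnegative, so we drop it. On the right we use $[a]_+^2\le a^2$, which gives
\[
\frac{1}{2\sigma}\|\lambda^t+\sigma G(x^t,\xi_t)\|^2=\frac{1}{2\sigma}\|\lambda^t\|^2+\langle\lambda^t,G(x^t,\xi_t)\rangle+\frac{\sigma}{2}\|G(x^t,\xi_t)\|^2 .
\]
On the left we apply Young's inequality:
\[
\langle \nabla_x F,x^{t+1}-x^t\rangle+\alpha\|x^{t+1}-x^t\|^2\ge -\frac{1}{4\alpha}\|\nabla_xF(x^t,\xi_t)\|^2\ge-\frac{1}{2\alpha}\|\nabla_xF(x^t,\xi_t)\|^2 .
\]
Rearranging yields
\[
-\langle\lambda^t,G(x^t,\xi_t)\rangle\le\frac{1}{2\sigma}\bigl(\|\lambda^t\|^2-\|\lambda^{t+1}\|^2\bigr)+\frac{\sigma}{2}\|G(x^t,\xi_t)\|^2+\frac{1}{2\alpha}\|\nabla_xF(x^t,\xi_t)\|^2 .
\]

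\textbf{Step 2: telescoping.} Summing the per-iteration bound over $t=1,\dots,T$ telescopes the $\lambda$ terms and gives (\ref{eq:comineq0}).

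\textbf{Step 3: expectation.} We now derive (\ref{eq:comcineqb}).
\begin{itemize}
\item Since $\lambda^1=0$, we have $\frac{1}{2\sigma}[\|\lambda^1\|^2-\|\lambda^{T+1}\|^2]\le0$.
\item Assumption (A3) bounds the $G$ terms by $\nu_g^2$ and (A4) bounds the gradient terms by $\kappa_f^2$, giving $\frac{\sigma}{2}\nu_g^2T+\frac{1}{2\alpha}\kappa_f^2T$.
\item On the left, $\lambda^t$ depends only on $\xi_{[t-1]}$, while $\xi_t$ is independent of $\xi_{[t-1]}$ by (A1). Hence
\[
\mathbb E[\langle\lambda^t,G(x^t,\xi_t)\rangle\mid\xi_{[t-1]}]=\langle\lambda^t,g(x^t)\rangle ,
\]
and the tower property gives $\mathbb E\sum_t\langle\lambda^t,G(x^t,\xi_t)\rangle=\mathbb E\sum_t\langle\lambda^t,g(x^t)\rangle$.
\end{itemize}

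The only delicate point is this conditioning step. It needs $x^t$ and $\lambda^t$ to be $\xi_{[t-1]}$-measurable, which holds by construction. It also needs integrability, which follows from $\|\lambda^t\|\le (t-1)\sigma\gamma_1$ (Lemma \ref{lem:1}) together with (A3). Everything else is routine.
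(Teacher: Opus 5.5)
Your proposal is correct and follows essentially the same route as the paper. The paper also starts from (\ref{eq:opt-x-2}), bounds $[\cdot]_+^2$ by the square, absorbs the gradient cross term with Young's inequality, drops the nonnegative $\Sigma^t_0$ term, telescopes, and then uses $\lambda^1=0$ together with the tower property $\mathbb E[G(x^t,\xi_t)\mid\xi_{[t-1]}]=g(x^t)$. Your remarks on the implicit use of (A1) and on integrability (via $\|\lambda^t\|\le(t-1)\sigma\gamma_1$ from Lemma \ref{lem:1}, which itself draws on (A2)) are details the paper leaves unstated.
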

{\bf Proof}. It follows from (\ref{eq:opt-x-2}) that
$$
\begin{array}{ll}
\displaystyle\langle \nabla_x F(x^t,\xi_t),x^{t+1}-x^t \rangle + \frac{1}{2\sigma}\|\lambda^{t+1}\|^2
+ \alpha \|x^{t+1}-x^t\|^2+\displaystyle \frac{1}{2}\|x^{t+1}-x^t\|^2_{\Sigma^t_0}\\[10pt]
\leq  \displaystyle\frac{1}{2\sigma}\|[\lambda^t+\sigma G(x^t,\xi_t)]_+\|^2\leq \displaystyle\frac{1}{2\sigma}\|[\lambda^t+\sigma G(x^t,\xi_t)]\|^2
\end{array}
$$
which implies
$$
\begin{array}{l}
-\langle \lambda^t, G(x^t,\xi_t)\rangle  \leq  \displaystyle \frac{1}{2\sigma}[\|\lambda^t\|^2-\|\lambda^{t+1}\|^2] -\displaystyle\langle \nabla_x F(x^t,\xi_t),x^{t+1}-x^t \rangle\\[10pt]
\quad \quad - \alpha \|x^{t+1}-x^t\|^2-\displaystyle \frac{1}{2}\|x^{t+1}-x^t\|^2_{\Sigma^t_0}
+ \displaystyle\frac{\sigma}{2}\| G(x^t,\xi_t)\|^2\\[10pt]
\leq \displaystyle \frac{1}{2\sigma}[\|\lambda^t\|^2-\|\lambda^{t+1}\|^2]+\displaystyle \frac{1}{2\alpha}\|\nabla_x F(x^t,\xi_t)\|^2-\displaystyle \frac{\alpha}{2} \|x^{t+1}-x^t\|^2
 -\displaystyle \frac{1}{2}\|x^{t+1}-x^t\|^2_{\Sigma^t_0}
+ \displaystyle\frac{\sigma}{2}\| G(x^t,\xi_t)\|^2.
\end{array}
$$
Making a sum from $1$ to $T$, we obtain (\ref{eq:comineq0}).

Noting that
$\lambda^1=0$ and
$$
\begin{array}{ll}
-\mathbb E\left(\displaystyle \sum_{t=1}^T\langle \lambda^t, G(x^t,\xi_t)\rangle\right)
& =- \mathbb E\mathbb E\left[\displaystyle \sum_{t=1}^T\langle \lambda^t, G(x^t,\xi_t)\rangle\,|\, \xi_{[t-1]}\right]\\[10pt]
& =-\mathbb E \left(\displaystyle \sum_{t=1}^T\langle \lambda^t, \mathbb E[G(x^t,\xi_t)\,|\, \xi_{[t-1]}]\rangle\right)=-\mathbb E\left( \displaystyle \sum_{t=1}^T\langle \lambda^t, g(x^t) \rangle\right)
\end{array}
$$
one has from (\ref{eq:comineq0}) that
\begin{equation}\label{eq:comineq0a}
\begin{array}{ll}
-\mathbb E \left(\displaystyle \sum_{t=1}^T\langle \lambda^t, g(x^t) \rangle\right)
&\leq \displaystyle \frac{\sigma}{2}\sum_{t=1}^T\mathbb E[\|G(x^t,\xi_t)\|^2]+
\displaystyle \frac{1}{2\alpha}\sum_{t=1}^T\mathbb E[\|\nabla_x F(x^t,\xi^t)\|^2]
\end{array}
\end{equation}
which yields (\ref{eq:comcineqb}) from Assumptions (A3) and (A4).\hfill $\Box$

\begin{proposition}\label{prop:regrets}
Let $(x^t,\lambda^t)$ be generated by PMQSopt. If (A1)-- (A6) hold, and
 \begin{equation}\label{eq:Sig0saa}
  \left\{
 \begin{array}{l}
 -(L_i+1)I \prec \Sigma^k_i \prec -L_i I, i=1,\ldots p,\\[6pt]
 \kappa_{\Sigma}=\max\{L_1,\ldots, L_p\}+1,\\[6pt]
 \Sigma^k_0 =-\displaystyle \sum_{j=1}^p \lambda^k_j\Sigma^k_j+\tau I
 \end{array}
 \right.
 \end{equation}
 for some positive number $\tau >p\kappa_{\Sigma}\gamma_2\sigma$,  where $\gamma_2 = \nu_g + \kappa_g D_0 + \frac{1}{2} \kappa_\Sigma D_0^2$. Assume $\epsilon_0>0$ and  $\kappa_\Sigma>0$ satisfy
  \begin{equation}\label{extrCon2}
  \sqrt{p}\kappa_\Sigma \leq \epsilon_0.
  \end{equation}
 Suppose
\begin{equation}\label{eqal1aa}
\alpha>  \max\{2\beta_k(\sigma),p(\kappa_g+\kappa_{\Sigma}D_0/2)^2\sigma\}.
\end{equation}
 Then, the following assertions hold:
\begin{itemize}
\item[{\rm (i)}]
The average expected rate of Lagrangian gradient violation is
\begin{equation}\label{eq:LagGx}
\begin{array}{l}
\mathbb E \displaystyle \frac{1}{T}
\sum_{t=1}^T \|\nabla \phi^t_{1/\alpha}(x^t)\|^2  \leq
\displaystyle \frac{4(\alpha+\tau)}{T} \left[ f(x^1)-\inf_{z\in X_0} f(z)\right]+\displaystyle
\frac{4\nu_g \psi(\sigma,\alpha,\tau,\tau)(\alpha+\tau)}{T}\\[6pt]
 +{4(\alpha+\tau)} \nu_g\gamma_1\sigma +4\alpha \sigma D_0\gamma_2p[\kappa_g+\kappa_{\Sigma}D_0]
+4\alpha (\alpha+\tau)^{-1}[\kappa_f+\sigma \gamma_2p(\kappa_g+\kappa_{\Sigma}D_0)]^2\\[8pt]
+4p\kappa_g^2\alpha (\alpha+\tau)^{-1} \pi(\sigma,\alpha,\tau,\theta)+8\alpha (\alpha+\tau)^{-1} \sigma \cdot p\kappa_g^2\gamma_1\cdot \psi(\sigma,\alpha,\tau,\tau)\\[8pt]
\quad \quad +4\alpha (\alpha+\tau)^{-1}\sigma^2\gamma_1^2p\kappa_g^2.
\end{array}
\end{equation}
\item[{\rm (ii)}]The average expected rate of constraint violation is
\begin{equation}\label{eq:constrc1x}
\displaystyle \frac{1}{T}\mathbb E\left[\sum_{t=1}^T g_i(x^t)\right]\leq \displaystyle \frac{1}{\sigma T}\psi (\sigma,\alpha,\tau,\tau) +\displaystyle\frac{1}
{\alpha+\tau}\left\{ \rho_1  +\rho_2\nu_g \sigma +\rho_2\psi (\sigma,\alpha,\tau,\tau)\right\},
\end{equation}
where
$$
\begin{array}{l}
\rho_1=[2\kappa_g+\kappa_{\Sigma}D_0]\kappa_f,\,\,
\rho_2=[2\kappa_g+\kappa_{\Sigma}D_0](\kappa_g+\kappa_{\Sigma}D_0/2)\sqrt{p}.
\end{array}
$$
\end{itemize}
\end{proposition}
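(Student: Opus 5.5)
For (i), I will sum the one-step inequality (\ref{eqfinalesti}) of Theorem \ref{th:L-est} over $t=1,\ldots,T$ and take total expectations. The hypotheses match: the choice (\ref{eq:Sig0saa}) with $\tau>p\kappa_\Sigma\gamma_2\sigma$ gives (B1)--(B4) by Proposition \ref{corPosiSig}, and $\alpha>2\beta_k(\sigma)$ is part of (\ref{eqal1aa}).

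The difference terms telescope to $4(\alpha+\tau)[\phi^1_{1/\alpha}(x^1)-\phi^{T+1}_{1/\alpha}(x^{T+1})]$. Since $\lambda^1=0$, we have $\phi^1_{1/\alpha}(x^1)\le\phi^1(x^1)=f(x^1)$. For the lower bound, take $w={\rm prox}(x^{T+1})$; then
$$\phi^{T+1}_{1/\alpha}(x^{T+1})\ge f(w)+\langle\lambda^{T+1},g(w)\rangle\ge\inf_{X_0}f-\nu_g\|\lambda^{T+1}\|.$$
Lemma \ref{lem:ebound}, applied with $s=\tau$ as the paper does, gives $\mathbb E\|\lambda^{T+1}\|\le\psi(\sigma,\alpha,\tau,\tau)$. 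Dividing by $T$ produces the first two terms of (\ref{eq:LagGx}). The constant terms $4(\alpha+\tau)\nu_g\gamma_1\sigma$ and $4\alpha\sigma D_0\gamma_2p[\ldots]$ pass through unchanged.

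The last term of (\ref{eqfinalesti}) needs more work. I will use $(a+b)^2\le 2a^2+2b^2$ with $a=\kappa_f+\sigma\gamma_2p(\ldots)$ and $b=\sqrt p\kappa_g\|\lambda^t\|$. The factor $2\alpha/(\alpha+\tau)$ then becomes $4\alpha/(\alpha+\tau)$, and a term $4\alpha(\alpha+\tau)^{-1}p\kappa_g^2\|\lambda^t\|^2$ appears. Its average must be bounded by $\pi$ together with the $\psi$- and $\sigma^2\gamma_1^2$-corrections. Lemma \ref{lem:lamsum} controls $\frac1T\sum\mathbb E\|\lambda^{t+1}\|^2$, which is shifted by one index. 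To handle the shift, I write
$$\|\lambda^t\|^2\le\|\lambda^{t+1}\|^2+2\sigma\gamma_1\|\lambda^{t+1}\|+\sigma^2\gamma_1^2,$$
which follows from (\ref{eq:2lambda}) because $\|\lambda^t\|\le\|\lambda^{t+1}\|+\sigma\gamma_1$. Averaging and applying (\ref{eqlamd1}) and (\ref{eqlamd2}) gives exactly the three remaining terms. The main obstacle is this index bookkeeping, along with checking that Lemma \ref{lem:ebound} applies with $s=\tau$, which needs $\tau$ to be an integer or a rounding.

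For (ii), I start from (\ref{eq:ccomineqaa}) in Proposition \ref{prop:cregret}. By the tower property, $\mathbb E[G_i(x^t,\xi_t)\mid\xi_{[t-1]}]=g_i(x^t)$, so the left side equals $\mathbb E\sum g_i(x^t)$. I bound $\mathbb E\lambda^{T+1}_i\le\mathbb E\|\lambda^{T+1}\|\le\psi$, which gives the $\psi/(\sigma T)$ term. For $\mathbb E\|x^{t+1}-x^t\|$, I use (\ref{eq:diffXa}) of Lemma \ref{lem:3}; its condition holds by (\ref{eqal1aa}), and $\lambda_{\min}(\Sigma^t_0)\ge\tau$ because $-\lambda^t_i\Sigma^t_i\succeq0$. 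This yields
$$\|x^{t+1}-x^t\|\le\frac{2}{\alpha+\tau}\bigl(\kappa_f+(\kappa_g+\kappa_\Sigma D_0/2)\sqrt p[\|\lambda^t\|+\nu_g\sigma]\bigr).$$
Multiplying by $\kappa_g+\kappa_\Sigma D_0/2$ and noting that $2(\kappa_g+\kappa_\Sigma D_0/2)=2\kappa_g+\kappa_\Sigma D_0$ produces $\rho_1$ and $\rho_2$. Averaging and bounding $\frac1T\sum\mathbb E\|\lambda^t\|\le\psi$ by (\ref{eqlamd1}) completes (\ref{eq:constrc1x}).
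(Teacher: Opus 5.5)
Your proposal is correct and follows the paper's proof essentially step for step. For (i) you sum (\ref{eqfinalesti}), bound the telescoped endpoints using $\lambda^1=0$ and $\mathbb E\|\lambda^{T+1}\|\le\psi(\sigma,\alpha,\tau,\tau)$, split the square, and absorb the index shift via $\|\lambda^t\|\le\|\lambda^{t+1}\|+\sigma\gamma_1$; for (ii) you combine (\ref{eq:ccomineqaa}) with the step bound of Lemma \ref{lem:3} and $\lambda_{\min}(\Sigma^t_0)\ge\tau$, exactly as the paper does. Using (\ref{eq:diffXa}) instead of the paper's (\ref{eq:diffX}) with $2\alpha-p(\kappa_g+\kappa_\Sigma D_0/2)^2\sigma\ge\alpha$ gives the same factor $2/(\alpha+\tau)$, and your cross term $2\sigma\gamma_1\|\lambda^{t+1}\|$ (bounded index by index via (\ref{eq:bd1})) is in fact the correct form of the paper's display (\ref{lamdap}), which writes $\|\lambda^t\|$ there.
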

{\bf Proof}. Summing (\ref{eqfinalesti}) over $t=1,\ldots, T$ gives
\begin{equation}\label{LagGra1}
\begin{array}{l}
 \displaystyle \frac{1}{T}\sum_{t=1}^T\|\nabla\phi^t_{1/\alpha}(x^t)\|^2
 \leq \displaystyle \frac{4(\alpha+\tau)}{T} \left[ \phi^1_{1/\alpha}(x^1)-\phi^{T+1}_{1/\alpha}(x^{T+1})
\right]+\displaystyle 4(\alpha+\tau) \nu_g\gamma_1\sigma\\[8pt]
+ 4\alpha \sigma D_0\gamma_2p[\kappa_g+\kappa_{\Sigma}D_0]
\quad  +2\alpha (\alpha+\tau)^{-1}\displaystyle \frac{1}{T}\sum_{t=1}^T[\kappa_f+\sqrt{p}\kappa_g\|\lambda^t\|+\sigma \gamma_2p(\kappa_g+\kappa_{\Sigma}D_0)]^2\\[8pt]
\leq \displaystyle \frac{4(\alpha+\tau)}{T} \left[ \phi^1_{1/\alpha}(x^1)-\phi^{T+1}_{1/\alpha}(x^{T+1})
\right]+{4(\alpha+\tau)}\nu_g\gamma_1\sigma+ 4\alpha \sigma D_0\gamma_2p[\kappa_g+\kappa_{\Sigma}D_0]\\[8pt]
\quad  +4\alpha (\alpha+\tau)^{-1}[\kappa_f+\sigma \gamma_2p(\kappa_g+\kappa_{\Sigma}D_0)]^2+4\alpha(\alpha+\tau)^{-1} p\kappa_g^2\displaystyle \frac{1}{T}\sum_{t=1}^T\|\lambda^t\|^2.
\end{array}
\end{equation}
Noting that
$$
\phi^1_{1/\alpha}(x^1)\leq f(x^1)
$$
and
for $a={\rm prox}_{\alpha^{-1}\phi^{T+1}}(x^{T+1})$,
$$
\phi^{T+1}_{1/\alpha}(x^{T+1})=f(a)+\langle \lambda^{T+1}, g(a)\rangle
\geq \inf_{z\in X_0} f(z)-\nu_g \|\lambda^{T+1}\|,
$$
we have
\begin{equation}\label{eq:diffph}
\displaystyle \frac{4(\alpha+\tau)}{T} \left[ \phi^1_{1/\alpha}(x^1)-\phi^{T+1}_{1/\alpha}(x^{T+1})
\right]\leq \displaystyle \frac{4(\alpha+\tau)}{T} \left[ f(x^1)-\inf_{z\in X_0} f(z)\right]+\displaystyle
\frac{4\nu_g \psi(\sigma,\alpha,\tau,\tau)(\alpha+\tau)}{T}.
\end{equation}
From Lemma \ref{lem:lamsum}, we have
\begin{equation}\label{lamdap}
\begin{array}{ll}
\displaystyle \frac{1}{T}\mathbb E\left[\sum_{t=1}^T\|\lambda^t\|^2\right]
& \leq \displaystyle \frac{1}{T}\mathbb E\left[\sum_{t=1}^T[\|\lambda^{t+1}\|+\sigma \gamma_1]^2\right]\\[8pt]
&=\sigma^2\gamma_1^2+2\sigma \gamma_1\displaystyle \frac{1}{T}\mathbb E\left[\sum_{t=1}^T\|\lambda^t\|\right]+\displaystyle \frac{1}{T}\mathbb E\left[\sum_{t=1}^T\|\lambda^{t+1}\|^2\right]\\[8pt]
&\leq \sigma^2\gamma_1^2+2\sigma \gamma_1\psi (\sigma,\alpha,\tau,\tau)+\pi(\sigma,\alpha,\tau,\theta).
\end{array}
\end{equation}
Combing (\ref{eq:diffph}) and (\ref{lamdap}) with (\ref{LagGra1}), we obtain (\ref{eq:LagGx}).

It follows from (\ref{eq:ccomineqaa}), (\ref{eq:diffX}) and $ 2\alpha-p(\kappa_g+\kappa_{\Sigma}D_0/2)^2\sigma\geq \alpha$ that
\[
\begin{array}{ll}
\mathbb E \left[\displaystyle \sum_{t=1}^Tg_i(x^t)\right]&\leq \displaystyle \frac{1}{\sigma}\mathbb E \lambda^{T+1}_i+\displaystyle\frac{2\kappa_g+\kappa_{\Sigma}D_0}
{\alpha+\tau}\sum_{t=1}^T\mathbb E\left(\kappa_f+(\kappa_g+\kappa_{\Sigma}D_0/2)\sqrt{p}
[\|\lambda^t\|+\nu_g\sigma]\right)\\[10pt]
&\leq \displaystyle \frac{1}{\sigma}\psi(\sigma,\alpha,\tau,\tau)+\displaystyle\frac{2\kappa_g+\kappa_{\Sigma}D_0}
{\alpha+\tau}\sum_{t=1}^T\left(\kappa_f+(\kappa_g+\kappa_{\Sigma}D_0/2)\sqrt{p}
[\psi(\sigma,\alpha,\tau,\tau)+\nu_g\sigma]\right),
\end{array}
\]
which proves (\ref{eq:constrc1x}) in (ii). \hfill $\Box$

Basing on Proposition \ref{prop:regrets} and Proposition \ref{prop:compregret}, by choosing  $\sigma=T^{-3/4}$, $\alpha=\beta T^{1/4}$ for some $\beta>0$, and $\tau=s=T^{1/2}$
in PMQSopt, we may derive specific oracle complexity  for  an $\varepsilon$-KKT stationarity.
\begin{theorem}\label{mainth:regrets}
Let $(x^t,\lambda^t)$ be generated by PMQSopt. Let (A1)-- (A6) hold, and
\begin{equation}\label{eqpars}
\alpha=\beta T^{1/4}, \sigma=T^{-3/4}, \tau=T^{1/2}
\end{equation}
where
$$
\beta=2\left[L_0+\displaystyle \sum_{j=1}^p\gamma_2L_j  \right]+1.
$$
Choose
 \begin{equation}\label{eq:Sig0sax}
  \left\{
 \begin{array}{l}
 -(L_i+1)I\prec\Sigma^k_i \prec -L_i I, i=1,\ldots p,\\[6pt]
 \kappa_{\Sigma}=\max\{L_1,\ldots, L_p\}+1,\\[6pt]
 \Sigma^k_0 =-\displaystyle \sum_{j=1}^p \lambda^k_j\Sigma^k_j+T^{1/2}I.
 \end{array}
 \right.
 \end{equation}
 If $T$ satisfies
\begin{equation}\label{eqal1xa}
T> \max\left\{\beta^2, \beta^{-1}p(\kappa_g+\kappa_{\Sigma}D_0/2)^2,(\kappa_{\Sigma}\gamma_2)^4,p\kappa_{\Sigma}\gamma_2\right\}.
\end{equation}
Assume $\epsilon_0>0$ and  $\kappa_\Sigma>0$ satisfy
  \begin{equation}\label{extrCon3}
  \sqrt{p}\kappa_\Sigma \leq \epsilon_0.
  \end{equation}
 Then the following assertions hold:
\begin{itemize}
\item[{\rm (i)}]The average expected rate of Lagrangian gradient violation is
\begin{equation}\label{eq:LagGxx}
\begin{array}{ll}
\displaystyle \frac{1}{T}\mathbb E
\sum_{k=1}^T \|\nabla \phi^t_{1/\alpha}(x^t)\|^2  \leq &
\displaystyle \frac{4(\beta+1)}{T^{1/2}} \left[ f(x^1)-\inf_{z\in X_0} f(z)\right]\\[8pt]
& +\displaystyle
\frac{4}{T^{1/2}}[(\beta+1)\nu_g\gamma_3+
 \beta D_0\gamma_2p[\kappa_g+\kappa_{\Sigma}D_0]\\[6pt]
& +\displaystyle \frac{4}{T^{1/4}}[ (\beta+1)\nu_g\gamma_1+\beta[\kappa_f+ \gamma_2p(\kappa_g+\kappa_{\Sigma}D_0)]^2+\beta p\kappa_g^2 \gamma_4]\\[8pt]
&+\displaystyle \frac{8\beta}{T} p\kappa_g^2\gamma_1 \gamma_3+\displaystyle \frac{4\beta}{T^{7/4}}\gamma_1^2p\kappa_g^2.
\end{array}
\end{equation}
\item[{\rm (ii)}] The average expected rate of  constraint violation to reach $0$ is
\begin{equation}\label{eq:constrc1xx}
\displaystyle \frac{1}{T}\mathbb E\left[\sum_{t=1}^T G_i(x^t,\xi_t)\right]\leq
\displaystyle \frac{\gamma_5}{T^{1/4}},
\end{equation}
where
$$
\gamma_5=\gamma_3 +\displaystyle\frac{1}
{\beta}\left\{[2\kappa_g+\kappa_{\Sigma}D_0]\kappa_f+ (\nu_g +\gamma_3 )[2\kappa_g+\kappa_{\Sigma}D_0](\kappa_g+\kappa_{\Sigma}D_0/2)\sqrt{p}\right\}
=\gamma_3 +\displaystyle\frac{1}
{\beta}[\rho_1+(\nu_g +\gamma_3 )\rho_2].
$$
\item[{\rm (iii)}]The average expected rate of  complementarity violation to reach $0$ is
\begin{equation}\label{eq:complementarity1Axx}
-\displaystyle \frac{1}{T}\mathbb E\left( \displaystyle \sum_{t=1}^T\langle \lambda^t, g(x^t) \rangle\right)\leq \displaystyle \frac{\nu_g^2}{2T^{3/4}}+\displaystyle \frac{\kappa_f^2}{2\beta T^{1/4}}.
\end{equation}
\end{itemize}
\end{theorem}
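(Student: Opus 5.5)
The plan is to specialize Proposition \ref{prop:regrets} and Proposition \ref{prop:compregret} to the parameter choice (\ref{eqpars}) with $s=\tau=T^{1/2}$. The first step is to check that their hypotheses hold. With (\ref{eq:Sig0sax}), Proposition \ref{corPosiSig} gives (B1)--(B4) once $\tau=T^{1/2}>p\kappa_\Sigma\gamma_2\sigma=p\kappa_\Sigma\gamma_2T^{-3/4}$; this follows from $T>p\kappa_\Sigma\gamma_2$ in (\ref{eqal1xa}).

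Condition (\ref{eqal1aa}) has two parts, both handled by (\ref{eqal1xa}):
\begin{itemize}
\item For $\alpha>2\beta_k(\sigma)$: for $k\le T$, $\beta_k(\sigma)\le L_0+\sum_jL_j\gamma_2 T\sigma=L_0+\sum_j L_j\gamma_2T^{1/4}$. Hence $2\beta_k(\sigma)\le(\beta-1)T^{1/4}<\beta T^{1/4}=\alpha$, using $T\ge1$.
\item For $\alpha>p(\kappa_g+\kappa_\Sigma D_0/2)^2\sigma$: this reads $\beta T^{1/4}>p(\cdot)^2T^{-3/4}$, i.e. $T>\beta^{-1}p(\cdot)^2$.
\end{itemize}
Finally, $T\ge\beta^2$ is exactly what Lemma \ref{lem:ebound} and Lemma \ref{lem:lamsum} need, with $\theta=1/2$, to give $\psi(\sigma,\alpha,\tau,\tau)\le\gamma_3$ and $\pi\le\gamma_4$.

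For (i), substitute into (\ref{eq:LagGx}) term by term, using $\alpha+\tau=\beta T^{1/4}+T^{1/2}\le(\beta+1)T^{1/2}$ and $\alpha(\alpha+\tau)^{-1}\le\beta T^{-1/4}$. The terms scale as follows:
\begin{itemize}
\item $4(\alpha+\tau)/T\cdot[f(x^1)-\inf f]$ and $4\nu_g\psi(\alpha+\tau)/T$ become $O(T^{-1/2})$ with constants $(\beta+1)$ and $(\beta+1)\nu_g\gamma_3$.
\item $4(\alpha+\tau)\nu_g\gamma_1\sigma\le4(\beta+1)\nu_g\gamma_1T^{-1/4}$.
\item $4\alpha\sigma D_0\gamma_2p[\cdot]=4\beta D_0\gamma_2p[\cdot]T^{-1/2}$.
\item $4\alpha(\alpha+\tau)^{-1}[\kappa_f+\sigma\gamma_2p(\cdot)]^2\le4\beta T^{-1/4}[\kappa_f+\gamma_2p(\cdot)]^2$, since $\sigma\le1$.
\item $4p\kappa_g^2\alpha(\alpha+\tau)^{-1}\pi\le4\beta p\kappa_g^2\gamma_4T^{-1/4}$.
\item The last two terms carry an extra $\sigma=T^{-3/4}$ or $\sigma^2$ and give the stated $T^{-1}$ and $T^{-7/4}$ remainders.
\end{itemize}
Collecting these produces (\ref{eq:LagGxx}).

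For (ii), I would use (\ref{eq:ccomineqaa}) directly, since the claim is stated for $G_i(x^t,\xi_t)$; equivalently use (\ref{eq:constrc1x}), noting $\mathbb E G_i(x^t,\xi_t)=\mathbb E g_i(x^t)$ by conditioning on $\xi_{[t-1]}$. Then $\frac{1}{\sigma T}\psi\le\gamma_3T^{-1/4}$, and $\frac{1}{\alpha+\tau}\le\frac{1}{\beta T^{1/4}}$. Together with $\sigma\le1$ and $\psi\le\gamma_3$, the bracket is at most $\rho_1+\rho_2(\nu_g+\gamma_3)$, which gives $\gamma_5T^{-1/4}$.

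For (iii), substitute $\sigma=T^{-3/4}$ and $\alpha=\beta T^{1/4}$ into (\ref{eq:comcineqb}) and divide by $T$.

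The main obstacle is not the algebra but the validity of the inputs. The bound $\lambda_j^k\le\gamma_2k\sigma$ behind $\beta_k(\sigma)$ must hold for every $k\le T$, and the use of $\psi(\sigma,\alpha,\tau,\tau)$ (that is, $s=\tau$) in Lemma \ref{lem:ebound} requires the drift condition of Lemma \ref{lem:l5}. That lemma assumes (\ref{extrCon}), $\sqrt p\kappa_\Sigma D_0^2\le\epsilon_0$, whereas here (\ref{extrCon3}) is assumed. I would take the lemmas as stated earlier with (\ref{extrCon3}) and only track the exponents of $T$, checking that every term is $O(T^{-1/4})$ or smaller.
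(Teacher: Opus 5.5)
Your proposal is correct and follows the paper's proof essentially step for step. You verify the hypotheses of Proposition \ref{prop:regrets}: $2\beta_k(\sigma)\le(\beta-1)T^{1/4}<\alpha$, the condition $\alpha>p(\kappa_g+\kappa_\Sigma D_0/2)^2\sigma$, and $\psi\le\gamma_3$, $\pi\le\gamma_4$ from $T>\beta^2$. Then you substitute $\alpha=\beta T^{1/4}$, $\sigma=T^{-3/4}$, $\tau=T^{1/2}$ into (\ref{eq:LagGx}), (\ref{eq:constrc1x}) and (\ref{eq:comcineqb}). Two of your steps tidy up things the paper leaves implicit: the direct check of $\tau>p\kappa_\Sigma\gamma_2\sigma$ for Proposition \ref{corPosiSig}, and the observation that $\mathbb E\,G_i(x^t,\xi_t)=\mathbb E\,g_i(x^t)$, which is needed because (ii) is stated for $G_i$. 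Like the paper, you take over the mismatch between (\ref{extrCon}) and (\ref{extrCon3}) without resolving it.
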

{\bf Proof}. Since $\alpha=\beta T^{1/4}$, where $\beta=2\left[L_0+\displaystyle \sum_{j=1}^p\gamma_2L_j  \right]+1$, we have
$$
\alpha> 2\left[L_0+\left(\displaystyle \sum_{j=1}^pL_j\gamma_2\right)T\sigma\right]+T^{1/4}
> 2 \beta_{T}(\sigma)\geq 2\beta_k(\sigma)
$$
for any $k=1,\ldots, T$.

Since $T>(\kappa_{\Sigma}\gamma_2)^4$ or equivalently $T^{1/4}>\kappa_{\Sigma}\gamma_2$, $\tau
> \sigma \kappa_{\Sigma}\gamma_2 k$, we have
$$\Sigma^k_0 =-\displaystyle \sum_{j=1}^p \lambda^k_j\Sigma^k_j+\tau I  \succ \sigma \kappa_{\Sigma}\gamma_2 k I.$$

Since $T >\beta^{-1}p(\kappa_g+\kappa_{\Sigma}D_0/2)^2$, we have
$$
\beta T^{1/4}> p(\kappa_g+\kappa_{\Sigma}D_0/2)^2\sigma,
$$
which implies $\alpha >p(\kappa_g+\kappa_{\Sigma}D_0/2)^2\sigma$. Since $T> \beta^2$, we have
$\psi (T^{-3/4},\beta T^{1/4},T^{1/2},T^{1/2})\leq \gamma_3$.

Summarizing the above discussions, we have from (\ref{eq:Sig0sax}) that all conditions in Proposition
\ref{prop:regrets} are satisfied.  Then from Proposition
\ref{prop:regrets}, we obtain (\ref{eq:LagGx}), (\ref{eq:constrc1x}) and
(\ref{eq:comcineqb}) of Proposition \ref{prop:compregret}. Noting that
$$
\psi (T^{-3/4},\beta T^{1/4},T^{1/2}, T^{1/2})\leq \gamma_3, \,
\pi(T^{-3/4},\beta T^{1/4},T^{1/2}, 1/2)\leq \gamma_4,
$$
we obtain the estimates (\ref{eq:LagGxx}), (\ref{eq:constrc1xx}) and (\ref{eq:complementarity1Axx}).
\hfill $\Box$
\begin{corollary}\label{uniform-choice}
Under the conditions of Theorem \ref{mainth:regrets}.
Then the following assertions hold:
\begin{itemize}
\item[{\rm (i)}]The average expected rate of Lagrangian gradient violation is
\begin{equation}\label{eq:LagGxxa}
\begin{array}{ll}
\mathbb E_{R,\xi_{[T]}}
 \|\nabla \phi^{R}_{1/\alpha}(x^{R})\|^2  \leq &
\displaystyle \frac{4(\beta+1)}{T^{1/2}} \left[ f(x^1)-\inf_{z\in X_0} f(z)\right]\\[8pt]
& +\displaystyle
\frac{4}{T^{1/2}}[(\beta+1)\nu_g\gamma_3+
 \beta D_0\gamma_2p[\kappa_g+\kappa_{\Sigma}D_0]\\[6pt]
& +\displaystyle \frac{4}{T^{1/4}}[ (\beta+1)\nu_g\gamma_1+\beta[\kappa_f+ \gamma_2p(\kappa_g+\kappa_{\Sigma}D_0)]^2+\beta p\kappa_g^2 \gamma_4]\\[8pt]
&+\displaystyle \frac{8\beta}{T} p\kappa_g^2\gamma_1 \gamma_3+\displaystyle \frac{4\beta}{T^{7/4}}\gamma_1^2p\kappa_g^2.
\end{array}
\end{equation}
\item[{\rm (ii)}] The average expected rate of  constraint violation to reach $0$ is
\begin{equation}\label{eq:constrc1xxa}
\mathbb E_{R,\xi_{[T]}} g_i(x^{R})\leq
\displaystyle \frac{\gamma_5}{T^{1/4}},
\end{equation} for $i=1,\ldots, p$,
where
$$
\gamma_5
=\gamma_3 +\displaystyle\frac{1}
{\beta}[\rho_1+(\nu_g +\gamma_3 )\rho_2].
$$
\item[{\rm (iii)}]The average expected rate of  complementarity violation to reach $0$ is
\begin{equation}\label{eq:complementarity1Axxa}
-\mathbb E_{R,\xi_{[T]}}
\langle \lambda^{R}, g(x^{R}) \rangle \leq \displaystyle \frac{\nu_g^2}{2T^{3/4}}+\displaystyle \frac{\kappa_f^2}{2\beta T^{1/4}}.
\end{equation}
\end{itemize}
\end{corollary}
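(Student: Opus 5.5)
The plan is to reduce each assertion of Corollary \ref{uniform-choice} to the matching assertion of Theorem \ref{mainth:regrets}, using the fact that the output index $R$ is uniform on $[T]$ and independent of the samples $\xi_{[T]}$. For any family of integrable random quantities $h^t=h^t(\xi_{[T]})$ this gives
$$
\mathbb E_{R,\xi_{[T]}}\, h^R=\mathbb E_{\xi_{[T]}}\Big[\frac{1}{T}\sum_{t=1}^T h^t\Big]=\frac{1}{T}\,\mathbb E\sum_{t=1}^T h^t ,
$$
by conditioning on $\xi_{[T]}$ and averaging over $R$ (Fubini/tower property). The identity needs no structure beyond integrability. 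Integrability holds here: $\|\lambda^t\|\le (t-1)\sigma\gamma_1$ deterministically by Lemma \ref{lem:1}, $g$ is bounded by $\nu_g$ under (A3), and the Moreau-envelope gradient is bounded in terms of $\|\nabla_x L\|$ through (\ref{eq:re-ms}).

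For (i) I take $h^t=\|\nabla\phi^t_{1/\alpha}(x^t)\|^2$. The identity turns the left-hand side into the quantity bounded in (\ref{eq:LagGxx}), so the same right-hand side applies.

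For (iii) I take $h^t=-\langle\lambda^t,g(x^t)\rangle$ and apply (\ref{eq:complementarity1Axx}) directly.

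For (ii) I take $h^t=g_i(x^t)$. Theorem \ref{mainth:regrets}(ii) is stated for $\frac1T\mathbb E\sum_t G_i(x^t,\xi_t)$, not for $g_i(x^t)$, so a conversion step is needed. Since $x^t$ is $\xi_{[t-1]}$-measurable and $\xi_t$ is independent of $\xi_{[t-1]}$ by (A1), I will write
$$
\mathbb E[G_i(x^t,\xi_t)]=\mathbb E\big[\mathbb E[G_i(x^t,\xi_t)\mid\xi_{[t-1]}]\big]=\mathbb E[g_i(x^t)],
$$
which is the same argument used in the proof of Proposition \ref{prop:compregret}. This yields $\frac1T\mathbb E\sum_t g_i(x^t)=\frac1T\mathbb E\sum_t G_i(x^t,\xi_t)\le\gamma_5T^{-1/4}$. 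The bound is also available directly from Proposition \ref{prop:regrets}(ii), since (\ref{eq:constrc1x}) is already written in terms of $g_i$, combined with $\psi\le\gamma_3$ under the parameter choice (\ref{eqpars}).

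The only point needing care is this measurability and independence step, which justifies replacing $G_i(x^t,\xi_t)$ by $g_i(x^t)$ and exchanging the expectation over $R$ with the one over $\xi_{[T]}$. Apart from that, the corollary is a restatement of the averaged bounds in Theorem \ref{mainth:regrets} with the same constants.
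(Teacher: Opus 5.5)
Your proposal is correct and follows the route the paper intends. The paper gives no separate proof of the corollary; it is an immediate consequence of Theorem \ref{mainth:regrets} via $\mathbb E_{R,\xi_{[T]}} h^R=\frac{1}{T}\mathbb E\sum_{t=1}^T h^t$ for the uniformly drawn, sample-independent index $R$. Your tower-property conversion of $G_i(x^t,\xi_t)$ into $g_i(x^t)$ in (ii), or equivalently reading the bound off (\ref{eq:constrc1x}) with $\psi\le\gamma_3$, correctly fills the small gap the paper leaves between the form of Theorem \ref{mainth:regrets}(ii) and part (ii) of the corollary.
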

\section{High Probability Performance Analysis}\label{Sec4}
\setcounter{equation}{0}
In this section, we shall analyze the high probability performance of PMQSopt under the following ``light-tail"
assumption for  constraint functions:
\begin{description}
\item[(C)] There exists a constant $\rho_c>0$ such that, for all $x\in X_0$,
$$
\mathbb E[\exp{[G_i(x,\xi)-g_i(x)]^2/\rho_c^2}]\leq \exp{\{1\}},\, i=1,\ldots,p.
$$
\end{description}
Assumption C is standard in the analysis of large-deviation property of stochastic algorithms.
Now we will use  part 2 of  Lemma \ref{lem:l7} to establish a high probability constraint violation bound.

Let $s=\lceil T^{1/2}\rceil$, $\mu=\exp{\{-\eta\}}/(T+1)$ and define
\begin{equation}\label{eq:hatphi}
\widehat \phi (T,\eta):=\kappa_0+\kappa_1+(\kappa_1\beta+\kappa_4) T^{-1/4}+\kappa_3 T^{-3/4}
+16\beta\displaystyle \frac{\gamma_1^2}{\varepsilon_0}\left(\eta+\log (T+1)\right)T^{-1/4},
\end{equation}
then we have $\phi (\sigma,\alpha, s,\tau,\mu)\leq \widehat \phi (T,\eta)$ for $\sigma=T^{-3/4}$, $\alpha=\beta
T^{1/4}$ and $\tau=s=T^{1/2}$.
\begin{proposition}\label{th:constr3} Let $\eta >1$.  Let $(x^t,\lambda^t)$ be generated by PMQSopt, and the conditions in Theorem \ref{mainth:regrets} and Assumption C be satisfied. Then
\begin{equation}\label{eq:constrProb}
{\rm Pr} \left[\displaystyle \sum_{t=1}^Tg_i(x^t) \leq \pi_c(T,\eta)\right] \geq 1-\exp{\{-\eta\}}-\exp{\{-\eta^2/3\}},
\end{equation}
where
$$
\pi_c (T,\eta)=\rho_1\beta^{-1}T^{3/4}+\rho_2\beta^{-1}\nu_g+\eta \rho_c\sqrt{T}+  (1+\beta^{-1}\rho_2)\widehat \phi (T,\eta)T^{3/4}.
$$
\end{proposition}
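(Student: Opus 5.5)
We split $\sum_{t=1}^T g_i(x^t)$ into a sample-path part and a martingale part:
$$
\sum_{t=1}^T g_i(x^t)=\sum_{t=1}^T G_i(x^t,\xi_t)+\sum_{t=1}^T \big[g_i(x^t)-G_i(x^t,\xi_t)\big].
$$
The first sum is controlled pathwise by Proposition \ref{prop:cregret}. The second is a martingale-difference sum, which we control through Assumption C. The two failure probabilities $\exp\{-\eta\}$ and $\exp\{-\eta^2/3\}$ correspond to these two pieces, and a union bound combines them.

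\textbf{Step 1 (pathwise bound).} By (\ref{eq:ccomineq0}) and (\ref{eq:diffX}), using $2\alpha-p(\kappa_g+\kappa_\Sigma D_0/2)^2\sigma\ge \alpha$ and $\lambda_{\min}(\Sigma^t_0)\ge\tau$ (both guaranteed by the choice of $T$ in Theorem \ref{mainth:regrets}), we get
$$
\sum_{t=1}^T G_i(x^t,\xi_t)\le \frac{1}{\sigma}\|\lambda^{T+1}\|+\frac{1}{\alpha+\tau}\sum_{t=1}^T\Big(\rho_1+\rho_2\big[\|\lambda^t\|+\nu_g\sigma\big]\Big).
$$
Now set $\sigma=T^{-3/4}$, $\alpha+\tau\ge \beta T^{1/4}$, and $\mu=e^{-\eta}/(T+1)$. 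Applying (\ref{eq:bd2}) at each of the $T+1$ indices $t=1,\ldots,T+1$ and taking a union bound, the event $\max_{t\le T+1}\|\lambda^t\|\le \phi(\sigma,\alpha,s,\tau,\mu)\le\widehat\phi(T,\eta)$ holds with probability at least $1-e^{-\eta}$. On this event the right-hand side is at most
$$
T^{3/4}\widehat\phi+\beta^{-1}T^{3/4}\big(\rho_1+\rho_2\widehat\phi\big)+\beta^{-1}\rho_2\nu_g,
$$
which is exactly $\pi_c(T,\eta)-\eta\rho_c\sqrt T$.

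\textbf{Step 2 (martingale tail).} Let $Y_t=g_i(x^t)-G_i(x^t,\xi_t)$. Since $x^t$ is $\xi_{[t-1]}$-measurable and $\xi_t$ is independent of $\xi_{[t-1]}$, we have $\mathbb E[Y_t\mid\xi_{[t-1]}]=0$. Assumption C gives, conditionally on $\xi_{[t-1]}$,
$$
\mathbb E\big[\exp\{Y_t^2/\rho_c^2\}\mid \xi_{[t-1]}\big]\le e.
$$
We then invoke the standard large-deviation inequality for martingale differences with light (sub-Gaussian) tails, as in Nemirovski et al. \cite{Nemirovski2009} and Lan--Zhou. It states that for $\eta>0$,
$$
\Pr\Big[\sum_{t=1}^T Y_t>\eta\rho_c\sqrt T\Big]\le\exp\{-\eta^2/3\}.
$$
The constant $3$ comes from the bound $\mathbb E[e^{\lambda Y_t}\mid\xi_{[t-1]}]\le e^{3\lambda^2\rho_c^2/4}$ together with a Chernoff argument. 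Since the paper does not state this inequality earlier, we quote it as a known result rather than re-derive it.

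\textbf{Step 3 (combine).} Take a union bound over the two failure events. Off both events,
$$
\sum_{t=1}^T g_i(x^t)\le \pi_c(T,\eta),
$$
which is (\ref{eq:constrProb}).

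The main obstacle is Step 1's uniform control of $\|\lambda^t\|$ over all $t$, not just at a single index: (\ref{eq:bd2}) is a per-index statement. The remedy is the union bound over the $T+1$ indices, which is why $\mu$ is chosen as $e^{-\eta}/(T+1)$. This choice produces the $\log(T+1)$ term inside $\widehat\phi$. We must also check that $16\gamma_1^2\varepsilon_0^{-1}\log(1/\mu)\,\sigma s$ is dominated by the last term of $\widehat\phi$. This holds because $\sigma s=T^{-1/4}$ and $\beta\ge1$.
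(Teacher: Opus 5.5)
Your proposal is correct and follows essentially the same argument as the paper. It uses the pathwise bound from (\ref{eq:ccomineq0}) combined with (\ref{eq:diffX}), control of all $\|\lambda^t\|$, $t=1,\ldots,T+1$, by $\widehat\phi(T,\eta)$ via (\ref{eq:bd2}) with $\mu=e^{-\eta}/(T+1)$ and a union bound over indices, the Lan--Zhou light-tail martingale inequality for $\sum_{t}[g_i(x^t)-G_i(x^t,\xi_t)]$, and a final union bound over the two failure events; your use of $\lambda_{\min}(\Sigma^t_0)\ge\tau$ to obtain the denominator $\alpha+\tau\ge\beta T^{1/4}$, and your explicit union bound over $t$, only make precise steps the paper takes implicitly.
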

{\bf Proof}. Define $Z(t)=\|\lambda^t\|$ for $\forall t \in \{1,2,\ldots\}$. From Lemma \ref{lem:l5}, $Z(t)$ satisfies the conditions in Lemma \ref{lem:l7} with
$\delta_{\max}=\sigma \gamma_1$ and $\zeta =\displaystyle \frac{\sigma}{4}\epsilon_0$, and $t_0=s$,
 and
$$
  \vartheta (\sigma,\alpha,\tau,s)=\displaystyle \frac{\epsilon_0\sigma s}{4}+\gamma_1\sigma(s-1)+\displaystyle \frac{2(\alpha +\tau)D_0^2}{\epsilon_0s}+\displaystyle \frac{\left(4\kappa_fD_0\right)}{\epsilon_0}+
 \displaystyle \frac{2\sigma \nu_g^2}{\epsilon_0},
 $$
From part 2 of  Lemma \ref{lem:l7}, we obtain  for any constant $0< \mu < 1$, we have
\begin{equation}\label{eq:bd2A}
{\rm Pr}[\|\lambda^k\|\geq \phi (\sigma,\alpha,s,\tau,\mu)
] \leq \mu.
\end{equation}
where
$$
\phi (\sigma,\alpha, s,\tau,\mu)=\kappa_0+\kappa_1\displaystyle \frac{\alpha+\tau}{s}+\kappa_3
 \sigma+\kappa_4 \sigma s+16\displaystyle \frac{\gamma_1^2}{\varepsilon_0}\log \left( \displaystyle \frac{1}{\mu}\right)\sigma s.
$$
If we take $\alpha=\beta T^{1/4}$, $s=\lceil  T^{1/2}\rceil$,$\tau=T^{1/2}$, $\mu=\exp{\{-\eta\}}/(T+1)$, then
$$
\phi (\sigma,\alpha, s,\mu)\leq \kappa_0+\kappa_1+(\kappa_1\beta+\kappa_4) T^{-1/4}+\kappa_3 T^{-3/4}
+16\beta\displaystyle \frac{\gamma_1^2}{\varepsilon_0}\left(\eta+\log (T+1)\right)T^{-1/4}.
$$
Thus (\ref{eq:bd2A}) implies
\begin{equation}\label{eq:p1}
{\rm Pr}\,\left[\|\lambda^t\|\geq \widehat \phi (T,\eta) \right]
\leq \displaystyle \frac{\exp{\{-\eta\}}}{T+1}.
\end{equation}
It follows from (\ref{eq:ccomineq0}) in Proposition \ref{prop:cregret} and (\ref{eq:diffX}) in Lemma \ref{lem:3} and $T > p(\kappa_g+\kappa_{\Sigma}D_0/2)^2$ implying $2\alpha-p(\kappa_g+\kappa_{\Sigma}D_0/2)^2\sigma>0$ that
\begin{equation}\label{eq:ccomineq0A}
\begin{array}{ll}
\displaystyle \sum_{t=1}^T G_i(x^t,\xi_t)
&\leq \displaystyle \frac{1}{\sigma} \lambda^{T+1}_i+\displaystyle\frac{2\kappa_g+\kappa_{\Sigma}D_0}
{\alpha}\sum_{t=1}^T\left(\kappa_f+(\kappa_g+\kappa_{\Sigma}D_0/2)\sqrt{p}
[\|\lambda^t\|+\nu_g\sigma]\right)\\[10pt]
&=\displaystyle \frac{1}{\sigma} \lambda^{T+1}_i+\displaystyle\frac{1}
{\alpha}\left\{ \rho_1 T +\rho_2\nu_g \sigma T+\rho_2\sum_{t=1}^T
\|\lambda^t\|\right\}\\[10pt]
\end{array}
\end{equation}
 Thus, for $\sigma=T^{-3/4}$ and $\alpha=\beta T^{1/4}$,
 we obtain from (\ref{eq:ccomineq0A}) that
\begin{equation}\label{eq:31}
\begin{array}{ll}
\displaystyle \sum_{t=1}^T g_i(x^t)\leq \displaystyle \sum_{t=1}^T [g_i(x^t)-G_i(x^t,\xi_t)]
+T^{3/4} \lambda^{T+1}_i+\displaystyle \frac{\rho_1}{\beta}T^{3/4} +\beta^{-1}\rho_2\nu_g+\displaystyle\frac{\rho_2 }{\beta T^{1/4}}
\sum_{t=1}^T
\|\lambda^t\|
\end{array}
\end{equation}
It follows from From a well-known result (\cite[Lemma 4.1]{Lan2020}), under Assumption C, one has for $i=1,\ldots,p$ that
\begin{equation}\label{eq:32}
{\rm Prob}\left[ \displaystyle \sum_{t=1}^T [g_i(x^t)-G_i(x^t,\xi_t)]\geq \eta \rho_c\sqrt{T}\right]\leq \exp{\{-\eta^2/3\}}.
 \end{equation}
Since (\ref{eq:p1}) holds for every $i=1,\ldots, p$, one has that
\begin{equation}\label{eq:33}
{\rm Prob}\left[ T^{3/4} \lambda^{T+1}_i+\displaystyle\frac{\beta^{-1}\rho_2 }{T^{1/4}}
\sum_{t=1}^T
\|\lambda^t\|\geq (1+\beta^{-1}\rho_2)\widehat \phi (T,\eta)T^{3/4}\right]\leq \exp{\{-\eta\}},
 \end{equation}
 where $\widehat \phi (T,\eta)$ is defined by (\ref{eq:hatphi}).
We next state a simple fact. For two random variables $X, Y$ and three constants $a, b, c $ with $c\in (0,1)$, if ${\rm Pr}[Y \geq  b]\leq c$ and $X \leq Y+a$ almost everywhere, then
$$
{\rm Pr}[X \geq  a+b] \leq {\rm Pr}[Y +a\geq a + b] = {\rm Pr}[Y\geq b] \leq c.
$$
If we take
$$
X=\displaystyle \sum_{t=1}^T g_i(x^t), \,\, Y=\displaystyle \sum_{t=1}^T [g_i(x^t)-G_i(x^t,\xi_t)]
+ T^{3/4} \lambda^{T+1}_i+\displaystyle\frac{\rho_2 }{\beta T^{1/4}}
\sum_{t=1}^T
\|\lambda^t\|
$$
and
$$
a=\rho_1\beta^{-1}T^{3/4}+\rho_2\beta^{-1}\nu_g,\, b=\eta \rho_c\sqrt{T}+  (1+\beta^{-1}\rho_2)\widehat \phi (T,\eta)T^{3/4},
$$
then from (\ref{eq:31}), (\ref{eq:32}) and (\ref{eq:33}) and the above observation, we obtain
$$
{\rm Pr}\left[\displaystyle \sum_{t=1}^T g_i(x^t)\geq a+b\right]
\leq \exp{\{-\eta^2/3\}}+\exp{\{-\eta\}},
$$
which implies (\ref{eq:constrProb}). \hfill $\Box$
 \begin{proposition}\label{th:KKT-res}[High-probability bound for the Moreau envelope gradient]
Let $(x^t,\lambda^t)$ be generated by PMQSopt and let the conditions of Theorem~3.2 hold. For any $\eta>1$,
\begin{equation}\label{eq:KKProb}
\Pr\left[ \frac{1}{T}\sum_{t=1}^{T}\|\nabla\phi_{1/\alpha}^t(x^t)\|^2 \le \pi_{\text{grad}}(T,\eta) \right] \ge 1 - e^{-\eta},
\end{equation}
where
\begin{equation}\label{eq:pidef}
\begin{array}{ll}
\pi_{\text{grad}}(T,\eta) =&\displaystyle \frac{4(\alpha+\tau)}{T}\Bigl[f(x^1)-\inf_{z\in X_0}f(z)\Bigr] +\displaystyle \frac{4\nu_g(\alpha+\tau)}{T}\widehat{\phi}(T,\eta)\\[4pt]
&
+ 4(\alpha+\tau)\nu_g\gamma_1\sigma + 4\alpha\sigma D_0\gamma_2 p[\kappa_g+\kappa_\Sigma D_0]
\\[4pt]
&
+ 4\alpha(\alpha+\tau)^{-1}\bigl[\kappa_f + \sigma\gamma_2 p(\kappa_g+\kappa_\Sigma D_0)\bigr]^2 + 4\alpha(\alpha+\tau)^{-1}p\kappa_g^2\widehat{\phi}(T,\eta)^2.
\end{array}
\end{equation}
\end{proposition}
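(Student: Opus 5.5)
The plan is to start from the deterministic, pathwise inequality obtained by summing (\ref{eqfinalesti}) of Theorem~\ref{th:L-est} over $t=1,\ldots,T$, exactly as in the first display (\ref{LagGra1}) of the proof of Proposition~\ref{prop:regrets}. No expectation is taken. The conditions of Theorem~\ref{mainth:regrets} guarantee $\alpha>2\beta_k(\sigma)$ and $\tau>p\kappa_\Sigma\gamma_2\sigma$, so (\ref{eqfinalesti}) holds for every realization. The sum telescopes, and using $(a+b)^2\le 2a^2+2b^2$ on the last term gives, almost surely,
\[
\frac{1}{T}\sum_{t=1}^T\|\nabla\phi^t_{1/\alpha}(x^t)\|^2\le \frac{4(\alpha+\tau)}{T}\bigl[\phi^1_{1/\alpha}(x^1)-\phi^{T+1}_{1/\alpha}(x^{T+1})\bigr]+C_0+4\alpha(\alpha+\tau)^{-1}p\kappa_g^2\frac{1}{T}\sum_{t=1}^T\|\lambda^t\|^2 .
\]
Here $C_0$ collects the three deterministic terms $4(\alpha+\tau)\nu_g\gamma_1\sigma$, $4\alpha\sigma D_0\gamma_2p[\kappa_g+\kappa_\Sigma D_0]$ and $4\alpha(\alpha+\tau)^{-1}[\kappa_f+\sigma\gamma_2p(\kappa_g+\kappa_\Sigma D_0)]^2$. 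Next, as in (\ref{eq:diffph}), I bound $\phi^1_{1/\alpha}(x^1)\le f(x^1)$ (since $\lambda^1=0$) and $\phi^{T+1}_{1/\alpha}(x^{T+1})\ge \inf_{X_0}f-\nu_g\|\lambda^{T+1}\|$. With these, the only random quantities left on the right-hand side are $\|\lambda^{T+1}\|$ and $\frac1T\sum_t\|\lambda^t\|^2$.

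Both random quantities are controlled on one good event. Let
\[
E=\bigcap_{t=1}^{T+1}\{\|\lambda^t\|<\widehat\phi(T,\eta)\}.
\]
By (\ref{eq:p1}), proved in Proposition~\ref{th:constr3} from part (2) of Lemma~\ref{lem:l7} with $\mu=e^{-\eta}/(T+1)$, each event $\{\|\lambda^t\|\ge\widehat\phi(T,\eta)\}$ has probability at most $e^{-\eta}/(T+1)$. A union bound over the $T+1$ indices gives $\Pr[E^c]\le e^{-\eta}$. On $E$ we have $\nu_g\|\lambda^{T+1}\|\le\nu_g\widehat\phi(T,\eta)$ and $\frac1T\sum_{t=1}^T\|\lambda^t\|^2\le\widehat\phi(T,\eta)^2$. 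Substituting these into the display above reproduces exactly the six terms of $\pi_{\rm grad}(T,\eta)$ in (\ref{eq:pidef}). Hence the event in (\ref{eq:KKProb}) contains $E$, which gives probability at least $1-e^{-\eta}$.

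The main obstacle is making the use of (\ref{eq:p1}) legitimate in this setting. That bound comes from Lemma~\ref{lem:l7}, which in turn needs the drift condition of Lemma~\ref{lem:l5} with $s=\lceil T^{1/2}\rceil$ and the parameter choices of Theorem~\ref{mainth:regrets}. I also need $\phi(\sigma,\alpha,s,\tau,\mu)\le\widehat\phi(T,\eta)$; the key point is $\sigma s\log(1/\mu)=\sigma s(\eta+\log(T+1))\lesssim T^{-1/4}(\eta+\log(T+1))$, and the factor $\beta$ in $\widehat\phi$ absorbs the rounding in $s$. I will simply cite this as established before Proposition~\ref{th:constr3}.

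Two further points need care. First, the union bound must cover the index $T+1$ as well as $1,\ldots,T$; this is why $\mu$ carries the factor $1/(T+1)$. Second, the bound $\frac1T\sum\|\lambda^t\|^2$ must be handled via the pointwise bound on $E$, not through $\pi(\sigma,\alpha,\tau,\theta)$, because $\pi$ only controls an expectation. No concentration for the stochastic gradient noise is needed, because (\ref{eqfinalesti}) is already pathwise.
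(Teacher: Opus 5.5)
Your proposal is correct and is essentially the paper's own proof. Both sum the pathwise inequality (\ref{eqfinalesti}) as in (\ref{LagGra1}), bound the telescoped envelope difference as in (\ref{eq:diffph}) while leaving $\|\lambda^{T+1}\|$ random, and then control $\|\lambda^{T+1}\|$ and $\frac{1}{T}\sum_{t=1}^T\|\lambda^t\|^2$ together on the event that $\|\lambda^t\|$ stays below $\widehat\phi(T,\eta)$ for all $t\in\{1,\ldots,T+1\}$. That event fails with probability at most $e^{-\eta}$, by a union bound over the per-iterate tail bound (\ref{eq:bd2})/(\ref{eq:p1}) with $\mu=e^{-\eta}/(T+1)$, which both you and the paper simply cite.
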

{\bf Proof}. From  (\ref{LagGra1}) and (\ref{eq:diffph}) we have the following  bound:
\begin{equation}\label{eq:3.27a}
\begin{array}{ll}
\displaystyle\frac{1}{T}\sum_{t=1}^{T}\|\nabla\phi_{1/\alpha}^t(x^t)\|^2 &\le
\displaystyle \frac{4(\alpha+\tau)}{T}\Bigl[f(x^1)-\inf_{z\in X_0}f(z)\Bigr] + \displaystyle \frac{4\nu_g(\alpha+\tau)}{T}\|\lambda^{T+1}\|  \\[8pt]
&\quad + 4(\alpha+\tau)\nu_g\gamma_1\sigma + 4\alpha\sigma D_0\gamma_2 p[\kappa_g+\kappa_\Sigma D_0]  \\[8pt]
&\quad + 4\alpha(\alpha+\tau)^{-1}\bigl[\kappa_f + \sigma\gamma_2 p(\kappa_g+\kappa_\Sigma D_0)\bigr]^2  \\[8pt]
&\quad + 4\alpha(\alpha+\tau)^{-1}p\kappa_g^2\displaystyle \frac{1}{T}\sum_{t=1}^{T}\|\lambda^t\|^2,
\end{array}
\end{equation}
where we have used the fact that $\psi(\sigma,\alpha,\tau,\tau)\le \gamma_3$ for $T\ge \beta^2$ (see (\ref{eq:lamB})).

 From Lemma \ref{lem:ebound}, part (2), with $s = T^{1/2}$ and $\mu = e^{-\eta}/(T+1)$, we have for $t=1,\ldots, T+1$,
\[
\Pr\left[ \|\lambda^t\| \ge \widehat{\phi}(T,\eta) \right] \le \frac{e^{-\eta}}{T+1}.
\]
We introduce the following event
 $$
 B=\left\{\forall t\in \{1,\ldots,T+1\}:\|\lambda^{t}\| \leq \widehat \phi (T,\eta)\right\}.
 $$
  Then  for the complement of $B$, denoted by $B^c$, one has
 $$
 {\rm Prob}\,B^c\leq \exp(-\eta).
 $$
Hence, with probability at least $1 - e^{-\eta}$, it holds that
\[
\|\lambda^{T+1}\| \le \widehat{\phi}(T,\eta) \mbox{ and } \frac{1}{T}\sum_{t=1}^{T}\|\lambda^t\|^2 \le \widehat{\phi}(T,\eta)^2.
\]
Substituting these bounds into (\ref{eq:3.27a}) yields the desired inequality. \hfill $\Box$

 Now we discuss complementarity violation of the sequence $(x^t,\lambda^t)$ b generated by PMQSopt.
\begin{proposition}\label{th:comp-res}
Let $\eta >1$.  Let $(x^t,\lambda^t)$ be generated by PMQSopt, and the conditions in Theorem \ref{mainth:regrets} and Assumption C be satisfied. Then
\begin{equation}\label{eq:Com-ResProb}
{\rm Pr} \left[-\displaystyle \sum_{t=1}^T\langle \lambda^t, g(x^t)\rangle\leq
\pi_{{cm}}(T,\eta)
\right] \geq 1-\exp{\{-\eta\}}-2\exp{\{-\eta^2/3\}},
\end{equation}
where
$$
\pi_{{cm}}(T,\eta)=
p\eta\widehat \phi (T,\eta) \rho_cT^{1/2}+
\displaystyle \frac{\nu_g^2}{2}T^{1/4}+
\displaystyle \frac{\kappa_f^2}{2}\beta^{-1}T^{3/4}
$$
\end{proposition}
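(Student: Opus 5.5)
The plan is to start from the pathwise inequality (\ref{eq:comineq0}) of Proposition \ref{prop:compregret} and convert it into a bound on the true complementarity sum by adding and subtracting the stochastic constraint values. Write
\[
-\sum_{t=1}^T\langle \lambda^t, g(x^t)\rangle = -\sum_{t=1}^T\langle \lambda^t, G(x^t,\xi_t)\rangle + \sum_{t=1}^T\langle \lambda^t, G(x^t,\xi_t)-g(x^t)\rangle .
\]
For the first sum, (\ref{eq:comineq0}) with $\lambda^1=0$ drops the nonpositive term $-\|\lambda^{T+1}\|^2/(2\sigma)$. Assumptions (A3) and (A4) then give the deterministic bound $\frac{\sigma}{2}\nu_g^2T+\frac{1}{2\alpha}\kappa_f^2T$. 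With $\sigma=T^{-3/4}$ and $\alpha=\beta T^{1/4}$ this is exactly $\frac{\nu_g^2}{2}T^{1/4}+\frac{\kappa_f^2}{2}\beta^{-1}T^{3/4}$, which are the last two terms of $\pi_{cm}(T,\eta)$. Everything therefore reduces to a high-probability bound on the martingale term $M_T=\sum_{t=1}^T\langle \lambda^t, G(x^t,\xi_t)-g(x^t)\rangle$ by $p\eta\widehat\phi(T,\eta)\rho_cT^{1/2}$.

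Each increment is a martingale difference, since $\lambda^t$ and $x^t$ are $\xi_{[t-1]}$-measurable and $\mathbb E[G(x^t,\xi_t)\,|\,\xi_{[t-1]}]=g(x^t)$. I would split it coordinatewise as $M_T=\sum_{i=1}^p\sum_{t=1}^T\lambda^t_i(G_i(x^t,\xi_t)-g_i(x^t))$. The multipliers are unbounded, so I would truncate them. Let $\bar\lambda^t_i=\min\{\lambda^t_i,\widehat\phi(T,\eta)\}$, which is still $\xi_{[t-1]}$-measurable and lies in $[0,\widehat\phi(T,\eta)]$. On the event $B=\{\|\lambda^t\|\le\widehat\phi(T,\eta),\ t=1,\ldots,T+1\}$, used in the proof of Proposition \ref{th:KKT-res}, we have $\bar\lambda^t_i=\lambda^t_i$. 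By (\ref{eq:p1}) and a union bound, ${\rm Pr}(B^c)\le e^{-\eta}$.

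For each $i$, the sequence $\bar\lambda^t_i(G_i(x^t,\xi_t)-g_i(x^t))/\widehat\phi(T,\eta)$ is a martingale difference sequence. By Assumption C and $\bar\lambda^t_i/\widehat\phi\in[0,1]$, it satisfies the conditional light-tail condition with constant $\rho_c$: indeed $\mathbb E[\exp(c^2Y^2/\rho_c^2)]\le \mathbb E[\exp(Y^2/\rho_c^2)]$ for $c\le1$. The large-deviation result \cite[Lemma 4.1]{Lan2020}, applied as in (\ref{eq:32}), then gives ${\rm Pr}[\sum_t\bar\lambda^t_i(G_i-g_i)\ge \eta\rho_c\widehat\phi(T,\eta)\sqrt T]\le e^{-\eta^2/3}$.

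The main obstacle is the probability accounting. A union over the $p$ coordinates naively costs $p\,e^{-\eta^2/3}$, but the statement allows only $2e^{-\eta^2/3}$. The way around it that I would try first is not to split coordinatewise. Instead, treat the scalar martingale with increments $\langle\bar\lambda^t,G-g\rangle/(p\widehat\phi)$ directly, bounding its conditional sub-Gaussian constant by $\rho_c$ via Hölder/Jensen across the $p$ coordinates. The factor $p$ in $\pi_{cm}$ suggests this normalization. That would leave a single $e^{-\eta^2/3}$, with the second copy as slack, possibly for a symmetric two-sided use. Finally, I would combine the pieces with the "simple fact" from the proof of Proposition \ref{th:constr3}: on $B\cap\{\text{truncated martingale}\le p\eta\widehat\phi\rho_c\sqrt T\}$ the claimed inequality holds. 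The failure probability is then at most $e^{-\eta}+2e^{-\eta^2/3}$, which gives (\ref{eq:Com-ResProb}).
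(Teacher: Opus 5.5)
Your proof is correct. It shares the paper's skeleton: the deterministic part comes from (\ref{eq:comineq0}) with $\lambda^1=0$, the event $B$ satisfies ${\rm Pr}(B^c)\le e^{-\eta}$ by (\ref{eq:p1}) and a union bound, the $p$-normalized light-tail constant follows from Assumption C by convexity of $\exp$, and \cite[Lemma 4.1]{Lan2020} is invoked once. The genuine difference lies in how the unbounded multipliers are handled in the concentration step.

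The paper works on $B$ and dominates $\bigl|\sum_t\langle\lambda^t,G(x^t,\xi_t)-g(x^t)\rangle\bigr|$ by $\widehat\phi(T,\eta)\sum_t\sum_i|g_i(x^t)-G_i(x^t,\xi_t)|$. It then applies Lemma 4.1 to this sum of absolute values. That step does not go through. The increments $\sum_i|g_i(x^t)-G_i(x^t,\xi_t)|$ are nonnegative with positive conditional mean, so they are not martingale differences, and their sum is typically of order $T$ rather than $\sqrt T$. Nor can one keep the signed terms and condition on $B$ instead, because $B$ depends on the whole trajectory.

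Your predictable truncation $\bar\lambda^t_i=\min\{\lambda^t_i,\widehat\phi(T,\eta)\}$ is exactly what repairs this. The increments $\langle\bar\lambda^t,G(x^t,\xi_t)-g(x^t)\rangle/(p\widehat\phi)$ have zero conditional mean. They are bounded in absolute value by $\frac1p\sum_i|G_i-g_i|$. Apply the chain $\bigl(\frac1p\sum_i|y_i|\bigr)^2\le\frac1p\sum_iy_i^2$ and $\exp\bigl(\frac1p\sum_iu_i\bigr)\le\frac1p\sum_ie^{u_i}$ conditionally on $\xi_{[t-1]}$ at the point $x^t\in X_0$, using independence of $\xi_t$. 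This gives the conditional light-tail constant $\rho_c$, so Lemma 4.1 legitimately applies. The event $B$ is then used only to identify $\bar\lambda^t$ with $\lambda^t$.

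Since only the upper tail of $M_T$ is needed, your failure probability is $e^{-\eta}+e^{-\eta^2/3}$. The second $e^{-\eta^2/3}$ in the statement is therefore pure slack, and no two-sided argument is required. As an aside, a radial truncation $\lambda^t\min\{1,\widehat\phi/\|\lambda^t\|\}$ combined with Cauchy--Schwarz would let you replace the factor $p$ in $\pi_{cm}$ by $\sqrt p$.
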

{\bf Proof}.
If we take $\sigma=T^{-3/4}$,$\alpha=\beta T^{1/4}$, $s=\lceil  T^{1/2}\rceil$,$\tau=T^{1/2}$, $\mu=\exp{\{-\eta\}}/(T+1)$, then
 (\ref{eq:bd2A}) implies
\begin{equation}\label{eq:p111a}
{\rm Pr}\,\left[\|\lambda^t\|\geq \widehat \phi (T,\eta)\right]
\leq \displaystyle \frac{\exp{\{-\eta\}}}{T+1}.
\end{equation}
In view  of (\ref{eq:comineq0}) of Proposition \ref{prop:compregret}, we have from Assumption (A3) and Assumption (A4) that
\begin{equation}\label{eq:comineq0a}
\begin{array}{ll}
-\displaystyle \sum_{t=1}^T\langle \lambda^t, g(x^t)\rangle
&=-\displaystyle \sum_{t=1}^T\langle \lambda^t,g(x^t)- G(x^t,\xi_t)\rangle
-\displaystyle \sum_{t=1}^T\langle \lambda^t, G(x^t,\xi_t)\rangle\\[10pt]
&\leq\displaystyle \sum_{t=1}^T\langle \lambda^t, G(x^t,\xi_t)-g(x^t)\rangle\\[10pt]
&\quad +\displaystyle \frac{1}{2\sigma} [\|\lambda^1\|^2-\|\lambda^{T+1}\|^2]+\displaystyle \frac{\sigma}{2}\sum_{t=1}^T\|G(x^t,\xi_t)\|^2+
\displaystyle \frac{1}{2\alpha}\sum_{t=1}^T\|\nabla_x F(x^t,\xi^t)\|^2\\[10pt]
&\leq\displaystyle \sum_{t=1}^T\langle \lambda^t, G(x^t,\xi_t)-g(x^t)\rangle+
\displaystyle \frac{\nu_g^2}{2}T^{1/4}+
\displaystyle \frac{\kappa_f^2}{2}\beta^{-1}T^{3/4}.
\end{array}
\end{equation}
We introduce the following event
 $$
 B=\left\{\forall t\in \{1,\ldots,T\}:\|\lambda^{t}\| \leq \widehat \phi (T,\eta)\right\}.
 $$
  Then from  (\ref{eq:p111a}), for the complement of $B$, denoted by $B^c$, one has
 $$
 {\rm Prob}\,B^c\leq \exp(-\eta).
 $$
 From the convexity of $\exp$, one has from Assumption C that
 $$
\mathbb E \exp\left\{\left[\displaystyle \sum_{i=1}^p\left| g_i(x^t)- G_i(x^t,\xi_t) \right| \right]^2/(p^2  \rho_c^2)\right\}
 \leq \mathbb E \displaystyle \sum_{i=1}^p\frac{1}{p}\exp\left\{\left[ g_i(x^t)- G_i(x^t,\xi_t) \right]^2/  \rho_c^2\right\}\leq \exp \{1\}.
 $$

 Thus we have from \cite[Lemma 4.1]{Lan2020} and the above inequality that
 \begin{equation}\label{eq:item4}
 \begin{array}{l}
 {\rm Prob}\left\{\left|\displaystyle \sum_{t=1}^T\langle \lambda^t, G(x^t,\xi_t)-g(x^t)\rangle \right|\geq p\eta\widehat \phi (T,\eta) \rho_cT^{1/2}\right\}\\[10pt]
 ={\rm Prob}\left\{\left|\displaystyle \sum_{t=1}^T\displaystyle \sum_{i=1}^p\lambda^{t}_i\left(g_i(x^t)- G_i(x^t,\xi_t)\right)\right|\geq p\eta\widehat \phi (T,\eta) \rho_cT^{1/2}\right\}\\[10pt]
 = {\rm Prob}\left\{ \left|\displaystyle \sum_{t=1}^T\displaystyle \sum_{i=1}^p\lambda^{t}_i\left(g_i(x^t)- G_i(x^t,\xi_t)\right)  \right|\geq p\eta\widehat \phi (T,\eta) \rho_cT^{1/2}\,\Big|\, B\right\}{\rm Prob}(B)\\[10pt]
 \quad +{\rm Prob}\left\{ \left|\displaystyle \sum_{t=1}^T\displaystyle \sum_{i=1}^p\lambda^{t}_i\left(g_i(x^t)- G_i(x^t,\xi_t)\right)  \right|\geq p\eta\widehat \phi (T,\eta) \rho_cT^{1/2}\,\Big|\, B^c\right\}{\rm Prob}(B^c)\\[10pt]
 \leq   {\rm Prob}\left\{ \displaystyle \sum_{t=1}^T\displaystyle \sum_{i=1}^p\left|g_i(x^t) -G_i(x^t,\xi_t)  \right|  \geq \rho_c\eta T^{1/2}\right\}+{\rm Prob}(B^c)\\[15pt]
 \leq 2 \exp(-\eta^2/3)+\exp(-\eta).
 \end{array}
 \end{equation}
 The result comes from (\ref{eq:comineq0a}). \hfill $\Box$

 Based on Propositions \ref{th:constr3},
 \ref{th:KKT-res} and \ref{th:comp-res}, we obtain the following theorem about the high probability guarantees for $\varepsilon$-Karush-Kuhn
 -Tucker stationarity.
\begin{theorem}\label{th:high-prob}
Let $(x^t,\lambda^t)$ be generated by PMQSopt, the conditions in Theorem \ref{mainth:regrets} and Assumption C be satisfied. Then there exist constants $K_1>0$, $K_2>0$ and $K_3>0$ such that
\begin{equation}\label{eq:KKProbN}
\Pr\left[ \frac{1}{T}\sum_{t=1}^{T}\|R_{\alpha/2}(x^t,\lambda^t)\| \le K_1T^{-1/8} \right] \ge 1 - \frac{1}{T^{2/3}},
\end{equation}
\begin{equation}\label{eq:constrProbN}
{\rm Pr} \left[\displaystyle \frac{1}{T}\displaystyle \sum_{t=1}^Tg_i(x^t) \leq K_2 T^{-1/4}
 \right] \geq 1-\displaystyle \frac{1}{T}-\displaystyle \frac{1}{T^{2/3}}\geq 1-\displaystyle \frac{2}{T^{2/3}},
\end{equation}
and
\begin{equation}\label{eq:Com-ResProbN}
{\rm Pr} \left[\displaystyle \frac{1}{T}\Big|\displaystyle \sum_{t=1}^T\langle \lambda^t, g(x^t)\rangle\Big|\leq  K_3 T^{-1/4}\right] \geq 1-\displaystyle \frac{1}{T}-\displaystyle \frac{2}{T^{2/3}}\geq 1-\displaystyle \frac{3}{T^{2/3}}.
\end{equation}

\end{theorem}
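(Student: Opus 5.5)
The plan is to specialize Propositions \ref{th:KKT-res}, \ref{th:constr3} and \ref{th:comp-res} to a single choice of $\eta$ depending on $T$, namely $\eta=\log T$. Then $e^{-\eta}=1/T\le T^{-2/3}$, and $e^{-\eta^2/3}=T^{-\log T/3}\le T^{-2/3}$ as soon as $\log T\ge 2$. This already gives the probability levels $1-1/T^{2/3}$, $1-1/T-1/T^{2/3}$ and $1-1/T-2/T^{2/3}$ in (\ref{eq:KKProbN}), (\ref{eq:constrProbN}) and (\ref{eq:Com-ResProbN}). With $\sigma=T^{-3/4}$, $\alpha=\beta T^{1/4}$, $\tau=T^{1/2}$ and $\eta=\log T$, the quantity $\widehat\phi(T,\eta)$ from (\ref{eq:hatphi}) equals $\kappa_0+\kappa_1+O(T^{-1/4}\log T)$. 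It is therefore bounded by a constant $\bar\phi$ independent of $T$, since $T^{-1/4}\log(T+1)$ is uniformly bounded.

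For the gradient bound, I substitute these choices into $\pi_{\rm grad}(T,\eta)$ in (\ref{eq:pidef}). Using $\alpha+\tau\le(\beta+1)T^{1/2}$, every term is at most a constant times $T^{-1/4}$:
\begin{itemize}
\item the terms $(\alpha+\tau)/T$ are $O(T^{-1/2})$;
\item the term $(\alpha+\tau)\sigma$ is $O(T^{-1/4})$, and $\alpha\sigma$ is $O(T^{-1/2})$;
\item the terms with factor $\alpha/(\alpha+\tau)\le\beta T^{-1/4}$ are $O(T^{-1/4})$, since $\widehat\phi$ is bounded.
\end{itemize}
So with probability at least $1-1/T$ we get $\frac1T\sum_t\|\nabla\phi^t_{1/\alpha}(x^t)\|^2\le C T^{-1/4}$. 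Cauchy--Schwarz then gives $\frac1T\sum_t\|\nabla\phi^t_{1/\alpha}(x^t)\|\le \sqrt{C}\,T^{-1/8}$. Finally, the right inequality in (\ref{eq:re-ms}) converts this into a bound on $\frac1T\sum_t\|R_{\alpha/2}(x^t,\lambda^t)\|$, with $K_1=\frac32(1+\frac1{\sqrt2})\sqrt C$.

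For the constraint violation, I divide $\pi_c(T,\eta)$ from Proposition \ref{th:constr3} by $T$. The terms $\rho_1\beta^{-1}T^{-1/4}$ and $(1+\beta^{-1}\rho_2)\widehat\phi\,T^{-1/4}$ are $O(T^{-1/4})$ and $\rho_2\beta^{-1}\nu_g/T$ is smaller. The remaining term $\rho_c\eta T^{-1/2}=\rho_c T^{-1/2}\log T$ is also bounded by a constant times $T^{-1/4}$. This yields $K_2$.

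For the complementarity bound, which is two-sided, I handle each side separately.
\begin{itemize}
\item The lower side, $-\frac1T\sum_t\langle\lambda^t,g(x^t)\rangle\le K T^{-1/4}$, follows by dividing $\pi_{cm}(T,\eta)$ by $T$. This gives $p\rho_c\bar\phi\,T^{-1/2}\log T+\frac{\nu_g^2}{2}T^{-3/4}+\frac{\kappa_f^2}{2\beta}T^{-1/4}=O(T^{-1/4})$.
\item The upper side, $\frac1T\sum_t\langle\lambda^t,g(x^t)\rangle\le K T^{-1/4}$, is the main obstacle, because Proposition \ref{th:comp-res} controls only one sign. I would return to (\ref{eq:comineq0}) and write $\langle\lambda^t,g(x^t)\rangle=\langle\lambda^t,g(x^t)-G(x^t,\xi_t)\rangle+\langle\lambda^t,G(x^t,\xi_t)\rangle$. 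The martingale part is already controlled two-sidedly by (\ref{eq:item4}), on the same event.
\item For $\sum_t\langle\lambda^t,G(x^t,\xi_t)\rangle$, I would use the update (\ref{xna}) via Lemma \ref{lem:1}, which gives $\langle\lambda^t,G(x^{t+1},\xi_t)\rangle\ge\frac1{2\sigma}(\|\lambda^{t+1}\|^2-\|\lambda^t\|^2)-\frac\sigma2\nu_g^2$. This runs in the wrong direction, so instead I would bound $\lambda^t_iG_i(x^t,\xi_t)$ componentwise through $\lambda^{t+1}_i\ge\lambda^t_i+\sigma q^t_i(x^{t+1})$, exactly as in Proposition \ref{prop:cregret}. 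Multiplying by $\lambda^t_i\ge0$ and summing gives a telescoping sum $\frac1{2\sigma}\|\lambda^{T+1}\|^2$ plus error terms $\frac{1}{\sigma}\|\lambda^{t+1}-\lambda^t\|^2$ and $\|\lambda^t\|\,\|x^{t+1}-x^t\|$. On $B$ these are bounded via Lemma \ref{lem:1}, Lemma \ref{lem:3} and $\|\lambda^t\|\le\widehat\phi$, giving $O(T^{3/4})$ overall.
\end{itemize}
If this componentwise route fails to close, I would fall back on proving only the lower side in this form and interpret the absolute value accordingly. Finally, a union bound over the event $B$ and the two concentration events gives the stated probability $1-1/T-2/T^{2/3}$, and $K_3$ collects the constants.
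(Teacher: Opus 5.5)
Your proposal is correct. For (\ref{eq:KKProbN}) and (\ref{eq:constrProbN}) it follows the paper's argument: specialize Propositions \ref{th:KKT-res} and \ref{th:constr3} with $\eta$ of order $\log T$, bound $\widehat\phi(T,\eta)$ by a constant, and finish with Cauchy--Schwarz and (\ref{eq:re-ms}). Two small differences there. The paper takes $\eta=\frac23\log T$ in the first part so that $e^{-\eta}=T^{-2/3}$ exactly. And your absorption of $\rho_c T^{-1/2}\log T$ into $T^{-1/4}$ is cleaner than the paper's $K_2$, which literally contains $\eta=\log T$ and so is not a constant.

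The real difference is (\ref{eq:Com-ResProbN}). The paper only says it is deduced from Proposition \ref{th:comp-res} in the same way. That proposition bounds only $-\sum_t\langle\lambda^t,g(x^t)\rangle$, so the paper's argument yields only $\frac1T\sum_t\langle\lambda^t,g(x^t)\rangle\ge -K_3T^{-1/4}$, not the absolute value. Your $\lambda^t_i$-weighted version of the proof of Proposition \ref{prop:cregret} supplies the missing upper half, and it does close, so you can drop the fallback. Take $\sigma q^t_i(x^{t+1})\le\lambda^{t+1}_i-\lambda^t_i$ with $q^t_i(x^{t+1})\ge G_i(x^t,\xi_t)-(\kappa_g+\kappa_\Sigma D_0/2)\|x^{t+1}-x^t\|$, multiply by $\lambda^t_i\ge 0$, and use $\lambda^t_i(\lambda^{t+1}_i-\lambda^t_i)\le\frac12[(\lambda^{t+1}_i)^2-(\lambda^t_i)^2]$ and $\lambda^1=0$. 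This gives
\[
\sum_{t=1}^T\langle\lambda^t,G(x^t,\xi_t)\rangle\le\frac{1}{2\sigma}\|\lambda^{T+1}\|^2+\sqrt{p}\Bigl(\kappa_g+\frac{\kappa_\Sigma D_0}{2}\Bigr)\sum_{t=1}^T\|\lambda^t\|\,\|x^{t+1}-x^t\|.
\]
The squared increments even enter with a favourable sign and drop out. On $B$, using $\|\lambda^{T+1}\|\le\|\lambda^T\|+\gamma_1\sigma$ and Lemma \ref{lem:3}, both terms are $O(T^{3/4})$.

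This is a deterministic bound on the same event where (\ref{eq:item4}) controls the martingale part. So the probability level $1-1/T-2/T^{2/3}$ is unchanged, and your route delivers the two-sided statement the theorem actually claims.

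One caution when you reuse (\ref{eq:item4}). As displayed, it applies the martingale tail bound behind (\ref{eq:32}) to $\sum_t\sum_i|g_i(x^t)-G_i(x^t,\xi_t)|$, which is not centred, so that step is not valid as written. The sound version applies the bound to the martingale differences $\langle\tilde\lambda^t,G(x^t,\xi_t)-g(x^t)\rangle$, with the predictable truncation $\tilde\lambda^t=\lambda^t\mathbf{1}\{\|\lambda^t\|\le\widehat\phi(T,\eta)\}$. These coincide with $\lambda^t$ on $B$ and give the same conclusion.
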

\textbf{Proof}. First, we prove (\ref{eq:KKProbN}). By Proposition \ref{th:KKT-res}, we obtain
\begin{equation}\label{eq:b1}
\Pr\left[ \frac{1}{T}\sum_{t=1}^{T}\|\nabla\phi_{1/\alpha}^t(x^t)\|^2 \le \pi_{\text{grad}}\bigl(T,\tfrac{2}{3}\log T\bigr) \right] \ge 1 - \frac{1}{T^{2/3}}.
\end{equation}
Setting $\eta=\frac{2}{3}\log T$ yields $e^{-\eta}=T^{-2/3}$, which simplifies the probability bound to $1 - T^{-2/3}$. The explicit expression for $\widehat{\phi}(T,\eta)$ then becomes
\[
\widehat{\phi}\bigl(T,\tfrac{2}{3}\log T\bigr) =\kappa_0+\kappa_1+(\kappa_1\beta+\kappa_4) T^{-1/4}+\kappa_3 T^{-3/4}
+16\beta\displaystyle \frac{\gamma_1^2}{\varepsilon_0}\left(\tfrac{2}{3}\log T+\log (T+1)\right)T^{-1/4}.
\]
Clearly, the quantity $\widehat{\phi}\bigl(T,\tfrac{2}{3}\log T\bigr)$ is bounded by a constant independent of $T$; let us denote this constant by $C_0$. Substituting the explicit values
\[
\alpha+\tau = \beta T^{1/4} + T^{1/2},\,\, \alpha\sigma = \beta T^{-1/2},\,\, \alpha(\alpha+\tau)^{-1} = \frac{\beta T^{1/4}}{\beta T^{1/4}+T^{1/2}} \le \beta T^{-1/4},
\]
we obtain
\begin{align*}
\pi_{\text{grad}}\bigl(T,\tfrac{2}{3}\log T\bigr) &\le \frac{4(\beta T^{1/4}+T^{1/2})}{T}\Bigl[f(x^1)-\inf f\Bigr] + \frac{4\nu_g(\beta T^{1/4}+T^{1/2})}{T}C_0 \\
&\quad + 4(\beta T^{1/4}+T^{1/2})\nu_g\gamma_1 T^{-3/4} + 4\beta T^{-1/2} D_0\gamma_2 p[\kappa_g+\kappa_\Sigma D_0] \\
&\quad + 4\beta T^{-1/4}\bigl[\kappa_f + \gamma_2 p(\kappa_g+\kappa_\Sigma D_0)\bigr]^2 + 4\beta T^{-1/4}p\kappa_g^2 C_0^2.
\end{align*}
Observe that $(\beta T^{1/4}+T^{1/2})/T = \beta T^{-3/4} + T^{-1/2}$ and $(\beta T^{1/4}+T^{1/2})T^{-3/4} = \beta T^{-1/2} + T^{-1/4}$. Consequently, all terms are of order $T^{-1/4}$ or smaller. Collecting the dominant terms yields
\[
\pi_{\text{grad}}\bigl(T,\tfrac{2}{3}\log T\bigr) \le C_1 T^{-1/4}
\]
for some constant $C_1>0$. Inequality (\ref{eq:re-ms}) gives
\[
\|R_{\alpha/2}(x^t,\lambda^t)\| \le \frac{3}{2}\Bigl(1+\frac{1}{\sqrt{2}}\Bigr)\|\nabla\phi_{1/\alpha}^t(x^t)\|,
\]
and by applying Jensen's inequality, we have
\[
\frac{1}{T}\sum_{t=1}^{T}\|R_{\alpha/2}(x^t,\lambda^t)\| \le \frac{3}{2}\Bigl(1+\frac{1}{\sqrt{2}}\Bigr)\sqrt{\frac{1}{T}\sum_{t=1}^{T}\|\nabla\phi_{1/\alpha}^t(x^t)\|^2}.
\]
Defining
$$
K_1=\frac{3}{2}\Bigl(1+\frac{1}{\sqrt{2}}\Bigr)\sqrt{C_1},
$$
we obtain (\ref{eq:KKProbN}) from (\ref{eq:b1}).

Next, we prove (\ref{eq:constrProbN}). For $\eta=\log T$, we have $\widehat{\phi}(T,\eta)\le C_2$ for some $C_2>0$ and $\pi_c(T,\eta)\le K_2 T^{3/4}$, where
$$
K_2=\rho_1\beta^{-1}+\rho_2\beta^{-1}\nu_g+\eta \rho_c+ (1+\beta^{-1}\rho_2)C_2.
$$
Thus, (\ref{eq:constrProbN}) follows directly from (\ref{eq:constrProb}).

Finally, following a similar approach to the above, we can readily deduce (\ref{eq:Com-ResProbN}) from (\ref{eq:Com-ResProb}) for some constant $K_3>0$. \hfill $\Box$

\begin{corollary}\label{th:corhigh-prob}
Let $(x^t,\lambda^t)$ be generated by PMQSopt, and the conditions in Theorem \ref{mainth:regrets} be satisfied. Then
\begin{equation}\label{eq:KKProbNaa}
{\rm Pr}_{\xi_{[T]}} \left\{\mathbb E_{R}\left[\|R_{\alpha/2}(x^{R},\lambda^{R})\|\right]
\leq K_1\left(T^{-1/8}\right)\right\} \geq 1 - \frac{1}{T^{2/3}},
\end{equation}
\begin{equation}\label{eq:constrProbNaa}
{\rm Pr}_{\xi_{[T]}} \left\{\mathbb E_{R}g_i(x^{R}) \leq K_2\left(T^{-1/4}\right)
 \right\} \geq 1-\displaystyle \frac{1}{T}-\displaystyle \frac{1}{T^{2/3}}\geq 1-\displaystyle \frac{2}{T^{2/3}},
\end{equation}
and
\begin{equation}\label{eq:Com-ResProbNaa}
{\rm Pr}_{\xi_{[T]}} \left\{\Big |\mathbb E_{R}\langle \lambda^{R}, g(x^{R})\rangle\Big|\leq K_3\left(T^{-1/4}\right)\right\} \geq 1-\displaystyle \frac{1}{T}-\displaystyle \frac{2}{T^{2/3}}
\geq 1-\displaystyle \frac{3}{T^{2/3}}.
\end{equation}
\end{corollary}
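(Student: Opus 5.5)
The corollary is Theorem \ref{th:high-prob} rewritten in terms of the random output index $R$. The plan is to show that, for every fixed realization of $\xi_{[T]}$, each $\mathbb E_R$ quantity equals the corresponding time average. The events in the corollary then coincide with, or contain, the events already controlled in (\ref{eq:KKProbN})--(\ref{eq:Com-ResProbN}).

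The key observation is that $R$ is drawn uniformly from $[T]$, independently of the samples. Conditioning on $\xi_{[T]}$ therefore fixes the whole trajectory $\{(x^t,\lambda^t)\}_{t=1}^T$, and for any function $h$ we get
\[
\mathbb E_R\bigl[h(x^R,\lambda^R)\bigr]=\frac{1}{T}\sum_{t=1}^T h(x^t,\lambda^t).
\]
The three choices of $h$ give the following.
\begin{itemize}
\item With $h=\|R_{\alpha/2}(\cdot,\cdot)\|$, the event in (\ref{eq:KKProbNaa}) is exactly the event in (\ref{eq:KKProbN}).
\item With $h=g_i(x)$, the event in (\ref{eq:constrProbNaa}) is exactly the event in (\ref{eq:constrProbN}).
\item With $h=\langle\lambda,g(x)\rangle$, we have $\bigl|\mathbb E_R\langle\lambda^R,g(x^R)\rangle\bigr|=\frac1T\bigl|\sum_t\langle\lambda^t,g(x^t)\rangle\bigr|$, so (\ref{eq:Com-ResProbNaa}) is the same event as (\ref{eq:Com-ResProbN}).
\end{itemize}
In each case the probability bound transfers verbatim. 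This includes the elementary inequalities $1/T\le 1/T^{2/3}$ already stated in Theorem \ref{th:high-prob}, which hold for $T\ge1$.

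The one point to check is hypotheses. The corollary states only ``the conditions in Theorem \ref{mainth:regrets}'', whereas Theorem \ref{th:high-prob} also uses Assumption C for the constraint and complementarity bounds. I will either read Assumption C as implicitly in force throughout Section \ref{Sec4}, or observe that it follows from (A3). The second route goes as follows: (A3) gives $|G_i(x,\xi)|\le\nu_g$, so $|g_i(x)|\le\nu_g$ and $|G_i(x,\xi)-g_i(x)|\le 2\nu_g$. Then C holds with $\rho_c=2\nu_g$, because $\mathbb E\exp\{[G_i-g_i]^2/\rho_c^2\}\le \exp\{1\}$. This small verification is the only nontrivial step; the rest is the identity above. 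The gradient bound (\ref{eq:KKProbNaa}) needs no light-tail assumption in any case, since Proposition \ref{th:KKT-res} does not use C.

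The main obstacle is only measure-theoretic bookkeeping: making sure $R$ is independent of $\xi_{[T]}$, so that $\mathbb E_R$ is a deterministic average once $\xi_{[T]}$ is fixed. This is guaranteed by the algorithm's output rule, which draws $R$ uniformly and independently of the samples.
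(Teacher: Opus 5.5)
Your proposal is correct and matches the argument the paper leaves implicit. Since $R$ is uniform on $[T]$ and independent of $\xi_{[T]}$, each $\mathbb E_R$ quantity equals the corresponding time average, so the three events and their probability bounds carry over verbatim from Theorem~\ref{th:high-prob}. Your observation that (A3) already implies Assumption C with $\rho_c=2\nu_g$ is a useful addition: it resolves the mismatch, which the paper does not address, between the corollary's hypotheses (no Assumption C) and those of Theorem~\ref{th:high-prob}.
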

\section{Numerical Results}\label{Sec6}
\setcounter{equation}{0}
In this section, we evaluate the proposed proximal method of multipliers with quadratic approximations on three classes of stochastic constrained optimization problems. The first experiment is a synthetic quadratically constrained nonconvex problem (QCNP), which is used to illustrate the sample efficiency of PMQSopt and the ${\cal O}(T^{-1/4})$ trend predicted by our analysis. The second experiment is a nonconvex Neyman-Pearson classification problem, which represents a probability-constrained learning task on real data. The third experiment is a nonconvex fairness constrained classification problem, which tests the applicability of PMQSopt to constrained learning models with fairness requirements. All numerical experiments are carried out using MATLAB R2021a on a laptop with Intel i7-12700H CPU and 8.0GB of RAM. In the comparison figures, the horizontal axis is the cumulative number of stochastic gradient evaluations, which provides a hardware-independent measure of stochastic work.
\subsection{Solving subproblems}
This subsection describes the numerical solution of the PMQSopt subproblem, which can be written at iteration $t$ as
 \begin{equation}\label{xna0a}
\begin{array}{l}
x^{t+1}= \displaystyle\arg\min_{x\in X_0}\left\{ {\cal L}^t_{\sigma }(x,\lambda^t) +\displaystyle\frac{\alpha}{2}\|x-x^t\|^2\right\}.
\end{array}
\end{equation}
This problem can be reformulated in the following general form:
\begin{equation}\label{eq:general-subp}
\begin{array}{ll}
\displaystyle   \min_{x \in X_0}  \phi(x) := & c^T_0(x-x^t)+\displaystyle \frac{1}{2}\langle G_0 (x-x^t),x-x^t \rangle\\[10pt]
& +
\displaystyle \frac{1}{2\sigma}\displaystyle \sum_{i=1}^p\left[q_i+\sigma  \left(c^T_i(x-x^t)+\displaystyle \frac{1}{2}\langle G_i (x-x^t),x-x^t \rangle\right)\right]_+^2+\displaystyle \frac{\alpha}{2}\|x-x^t\|^2,
\end{array}
\end{equation}
with
$$
c_0=\nabla_x F(x^t,\xi_t),\quad q_i=\lambda^t_i+\sigma G_i(x^t,\xi_t), \quad c_i =\nabla_x G_i(x^t,\xi_t),\quad  G_i=\Sigma^t_i,\quad i=1,\ldots,p.
$$
Problem (\ref{eq:general-subp}) is strongly convex under the parameter choices used by PMQSopt. We solve it by Nesterov's accelerated projected gradient method (APG); see \cite{Beck2017}.

\begin{algorithm}[H]\label{Algorithm_sub}
	\textbf{Step 0}: Input $x^{0} \in X_{0}$ and $\eta >1$. Set $y^{0} = x^{0}$, $L_{-1} = 1$ and $k:=0$.
	
	\textbf{Step 1}: Set
	\begin{equation}\nonumber
		x^{k+1} = T_{L_{k}} (y^{k}),
	\end{equation}
	where $T_{L} (y) := \Pi_{X_{0}} \left[y - \frac{1}{L}\nabla \phi (y)\right]$, the stepsize $L_{k} = L_{k-1}\eta^{i_{k}}$, and $i_{k}$ is the smallest nonnegative integer satisfying the following condition
	\begin{equation}\nonumber
		\begin{aligned}
			\phi \left(T_{L_{k-1}\eta ^{i_{k}}}(y^{k})\right) \leq \phi(y^{k}) &+ \langle \nabla \phi (y^{k}), T_{L_{k-1}\eta ^{i_{k}}}(y^{k}) -y^{k} \rangle \\
			&+ \frac{L_{k-1}\eta ^{i_{k}}}{2} \Vert T_{L_{k-1}\eta ^{i_{k}}}(y^{k}) -y^{k} \Vert^2.
		\end{aligned}
	\end{equation}
	
	\textbf{Step 2}: Compute
	\begin{equation}\nonumber
		y^{k+1} = x^{k+1} +\frac{k}{k+3}\left(x^{k+1}-x^{k}\right).
	\end{equation}
	
	\textbf{Step 3}: Set $k :=k+1$ and go to Step 1.
	\caption{Nesterov's accelerated projected gradient method for the subproblem}
\end{algorithm}

Standard convergence guarantees for projected APG applied to smooth convex problems can be found in \cite{Beck2017}; stronger rates are available for strongly convex variants or restarted implementations. In our experiments, $X_0$ is chosen to be simple so that the projection $\Pi_{X_{0}}$ can be computed efficiently.

\subsection{Synthetic quadratically constrained nonconvex problem}
We first test PMQSopt on a synthetic quadratically constrained nonconvex problem (QCNP). This experiment serves as the main synthetic benchmark and is designed to illustrate both practical sample efficiency and the empirical behavior of the residuals appearing in our theory. Its construction is inspired by our earlier MATLAB implementation. In particular, we retain a common active-boundary structure so that all constraints share a similar geometry, while allowing moderate heterogeneity and quadratic perturbations to preserve nonconvexity. The problem is defined on the box $X_0:=\{x\in\R^n:\|x\|_{\infty}\le R\}$ and has the form
\begin{equation}
	\begin{aligned}
		\min_{x \in X_0}\quad & f(x):=\frac{1}{N}\sum_{s=1}^{N}\log\left(1+\frac{1}{2}\|H_sx-c_s\|^2\right)\\
		\mathrm{s.t.}\quad & g_i(x):=\frac{1}{N}\sum_{s=1}^{N}\left[a_{i,s}^\top(x-\bar x)+\frac{1}{2}(x-\bar x)^\top Q_{i,s}(x-\bar x)\right]\le 0,\quad i=1,\ldots,p,
	\end{aligned}
\end{equation}
where each $Q_{i,s}$ is diagonal. In our implementation, we use $n=50$, $p=50$, $N=100$, $m=5$, and $R=10$. We first sample a boundary reference point $\bar x\in[-1.5,-0.5]^n$ componentwise and then define a strictly feasible point by $x^{\rm feas}:=\bar x-0.5\mathbf{1}_n$. The objective is centered at a point $x^{\rm obj}=\mathbf{1}_n$ through the choice $c_s=H_sx^{\rm obj}$, where each $H_s\in\R^{m\times n}$ has i.i.d. entries ${\cal N}(0,1/n)$. For the constraints, the diagonal entries of $Q_{i,s}$ are drawn uniformly from $[-q_{\max},q_{\max}]$ with $q_{\max}=0.05$, while each component of $a_{i,s}$ is drawn from $[0.5,0.7]$. Thus, every constraint has a similar first-order active direction at $\bar x$, but the coefficients are not identical, and the quadratic perturbations introduce moderate nonconvexity. By construction, one has $g_i(\bar x)=0$ for every $i$, whereas $g_i(x^{\rm feas})<0$ for all $i$. Therefore, the problem possesses a common active boundary point together with a strictly feasible point, which is consistent with the assumptions used in our analysis.

For this synthetic QCNP, we compare PMQSopt with MLALM \cite{Shi2025} and Stoc-iALM \cite{Li2024}. The comparison is averaged over 10 independent runs and is reported against the cumulative number of stochastic gradients. We plot the current objective value and the current feasibility error $\sum_{i=1}^{p}[g_i(x^t)]_+$, since these two quantities provide a direct view of practical progress on this problem. The results are shown in Figures \ref{fig:qcnp-compare-obj} and \ref{fig:qcnp-compare-cons}. In these tests, PMQSopt achieves a competitive objective decrease while maintaining steady feasibility improvement. In particular, PMQSopt reaches a low-objective and near-feasible regime with fewer stochastic gradients than MLALM, while using less stochastic work than Stoc-iALM. These observations are consistent with the sample-efficient design of PMQSopt.

Besides the algorithmic comparison, we also examine whether the empirical decay of the theory-driven residuals is consistent with the complexity bounds established in Section~\ref{Sec3}. To this end, we track the two quantities
\[
r_{\rm KKT}^{2}(T):=\frac{1}{T}\sum_{t=1}^{T}\|R_{\alpha_{\rm met}}(x^t,\lambda^t)\|^2,\quad
r_{\rm cons}(T):=\frac{1}{T}\sum_{t=1}^{T}\sum_{i=1}^{p}[g_i(x^t)]_+.
\]
According to Theorem \ref{mainth:regrets}, these residuals are predicted to decay at the order ${\cal O}(T^{-1/4})$. Figures \ref{fig:qcnp-theory-kkt} and \ref{fig:qcnp-theory-cons} therefore plot the empirical averages, computed over 8 independent runs, together with a power-law fit and a reference curve proportional to $T^{-1/4}$. The squared KKT residual displays a clear decreasing trend that is close to the predicted order. The constraint residual also decreases as $T$ grows, although its empirical slope is somewhat flatter than the ideal $-1/4$ benchmark on this instance. Overall, these results are consistent with the theoretical trend predicted for PMQSopt on synthetic QCNPs. To keep the presentation focused, we do not include the complementarity residual in this subsection.
\begin{figure}[!htb]
	\centering
	\subfloat[Objective value.]{\expplot{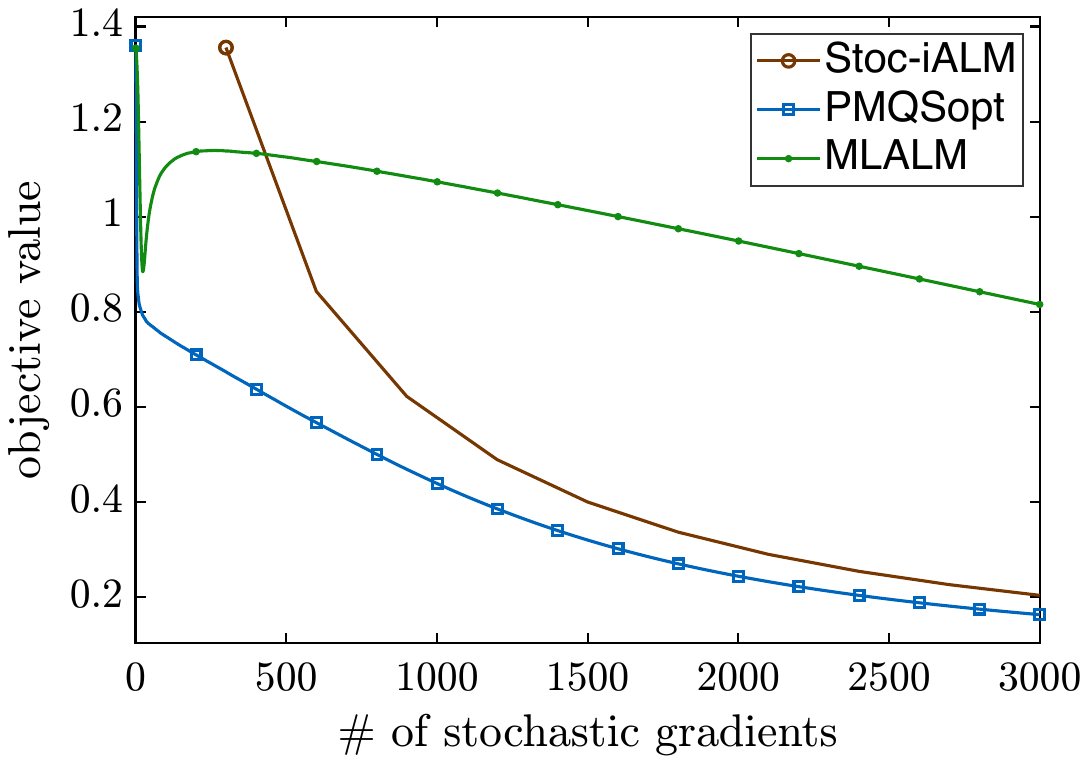}
	\label{fig:qcnp-compare-obj}}
	\hfill
	\subfloat[Constraint violation.]{\expplot{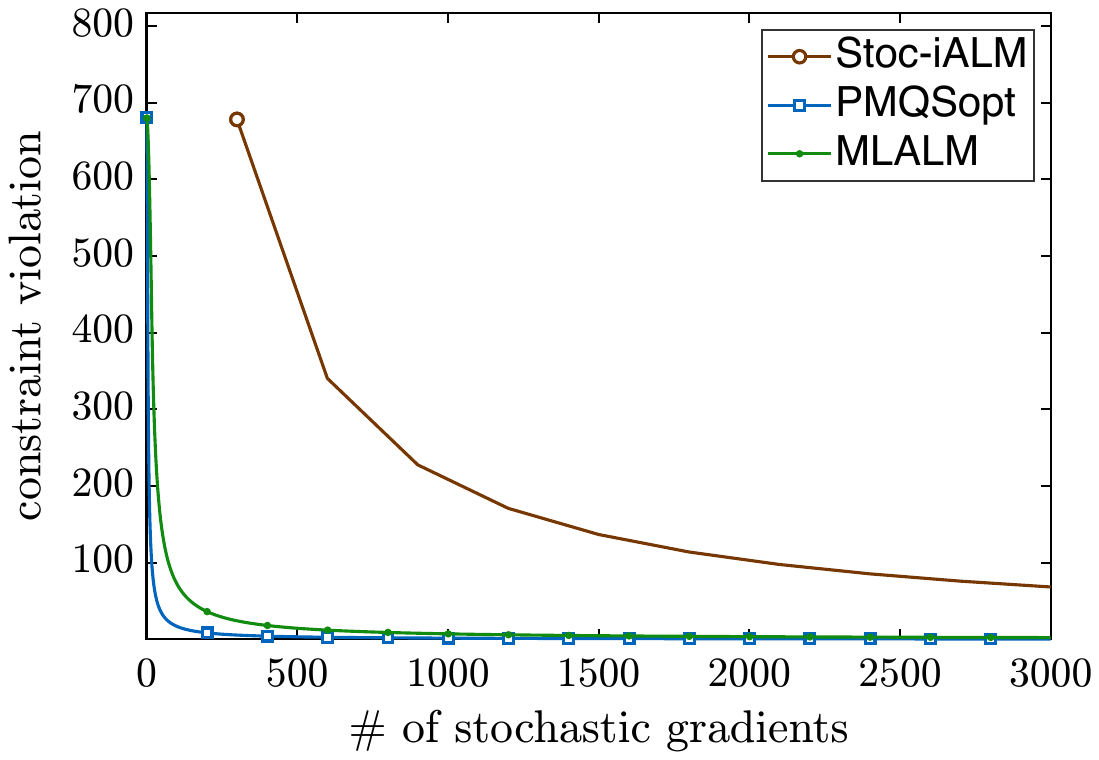}
	\label{fig:qcnp-compare-cons}}
	\caption{Objective value and constraint violation versus the number of stochastic gradients on the synthetic QCNP.}
\end{figure}

\begin{figure}[!htb]
	\centering
	\subfloat[Squared KKT residual.]{\expplot{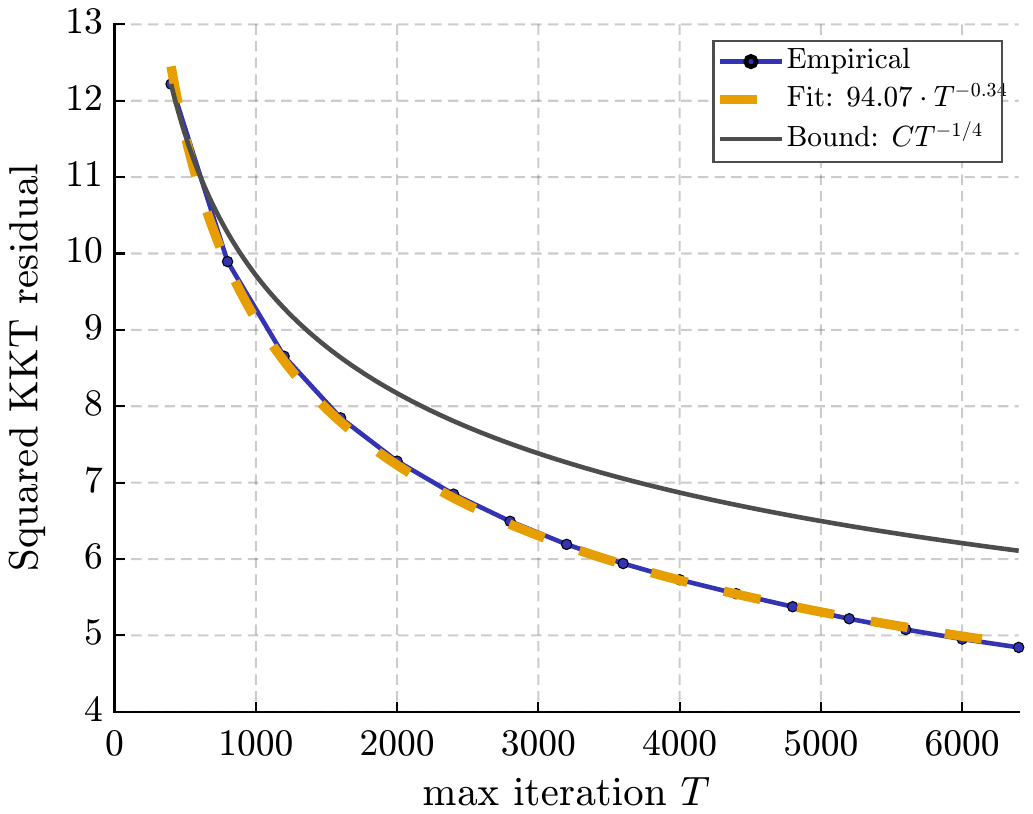}
	\label{fig:qcnp-theory-kkt}}
	\hfill
	\subfloat[Constraint residual.]{\expplot{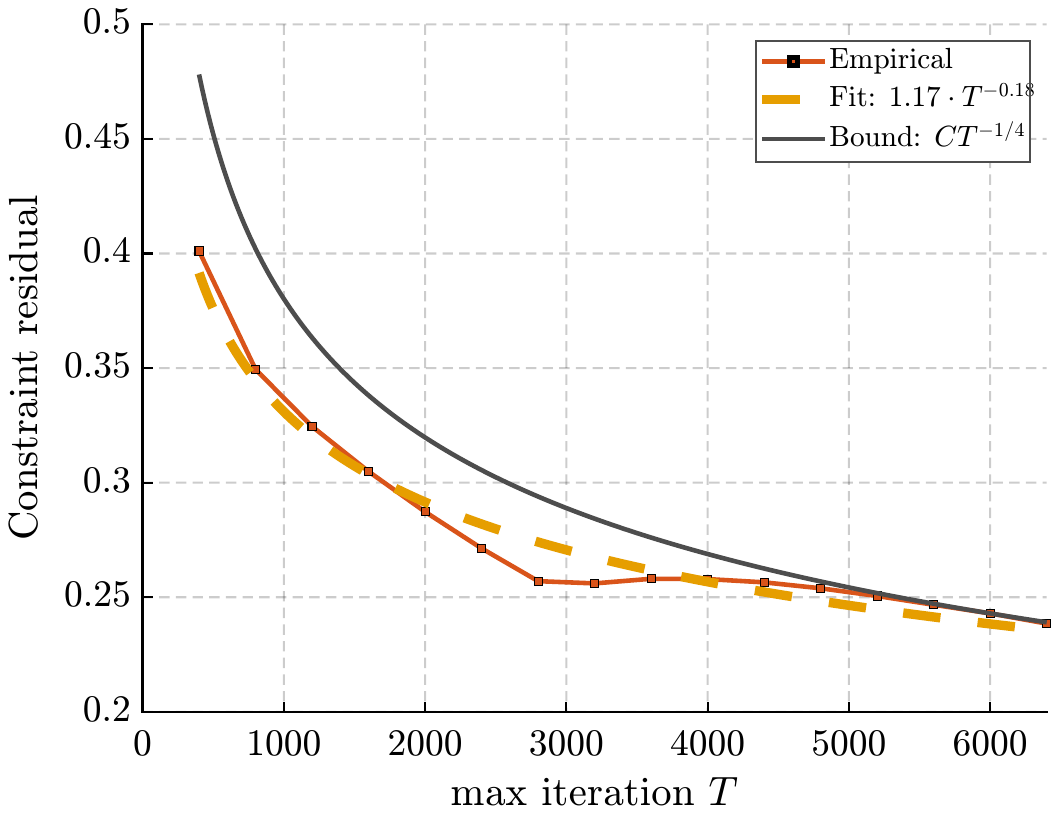}
	\label{fig:qcnp-theory-cons}}
	\caption{Empirical decay of the theory-driven residuals for PMQSopt on the synthetic QCNP.}
\end{figure}

\FloatBarrier
\subsection{Nonconvex Neyman-Pearson classification}
In this subsection, we test PMQSopt on a nonconvex Neyman-Pearson classification problem. This experiment represents a real-data probability constrained classification task. The goal is to minimize the false-negative error while controlling the false-positive error at a prescribed level. The model can be written as
\begin{equation}
	\begin{aligned}
		\min_{x \in \R^{d}} \quad &f(x) := \frac{1}{N_{0}} \sum_{i=1}^{N_{0}} \phi (x^{\top}a_{i}^{0}) \\
		\mathrm{s.t.} \quad & g(x) := \frac{1}{N_{1}} \sum_{i=1}^{N_{1}} \phi(- x^{\top}a_{i}^{1}) - \tau \leq 0,
	\end{aligned}
\end{equation}
where $\{a_{i}^{0}\}_{i=1}^{N_{0}}$ and $\{a_{i}^{1}\}_{i=1}^{N_{1}}$ denote the positive-class and negative-class samples in the training set, respectively. The parameter $\tau$ specifies the admissible false-positive level, and $\phi(u)=1/(1+\exp(u))$ is the sigmoid loss.

The data sets used in our comparison are listed in Table \ref{table 1}. For multi-class data, we construct a binary Neyman-Pearson task; for instance, on MNIST we classify odd digits versus even digits. The MATLAB scripts support all three data sets, and we report the numerical results on MNIST, CINA, and gisette in the main text.
\begin{table}
	\centering
	\caption{Data sets used in Neyman-Pearson classification}
	\label{table 1}
	\begin{tabular}{cccccc}
		\toprule[2pt]
		Dataset & Data $N$ & Variable $n$ & Density & False-positive level $\tau$  & Reference\\
		\midrule[1pt]
		MNIST & 60000 & 784 & 19.12\% & 0.2 & LeCun et al.\cite{LeCun2010}\\
		CINA & 16033 & 132 & 29.56\% & 0.3 & workbench team\cite{CINA2008}\\
		gisette & 6000 & 5000 &12.97\%  & 0.2 & Guyon et al.\cite{Guyon2004}\\
		\bottomrule[2pt]
	\end{tabular}
\end{table}

We compare PMQSopt with MLALM \cite{Shi2025} and Stoc-iALM \cite{Li2024}. Stoc-iALM is developed for a related nonconvex expectation-constrained setting, and we use the standard slack-variable reformulation when needed to apply it to the present inequality-constrained model. Since all three methods are augmented-Lagrangian type schemes, these comparisons are natural from both the modeling and algorithmic viewpoints.

For all three methods, we use mini-batch sampling with batch size $10$ for both the objective and the constraint estimators, and we average the reported curves over 10 independent runs. In PMQSopt, the stochastic gradient and Hessian estimates at iteration $t$ are formed from
\begin{equation*}
	\begin{aligned}
	c_{0}^{t} &= \frac{1}{\lvert \mathcal{N}_{0}^{t}\rvert} \sum_{i \in \mathcal{N}_{0}^{t}} \nabla f_{i}(x^t),\quad c_{1}^{t} := \frac{1}{\lvert \mathcal{N}_{1}^{t} \rvert } \sum_{i \in \mathcal{N}_{1}^{t}} \nabla g_{i}(x^t), \\
	\Sigma_{0}^{t} &= \frac{1}{\lvert \mathcal{N}_{0}^{t}\rvert} \sum_{i \in \mathcal{N}_{0}^{t}} \nabla^{2} f_{i}(x^t),\quad \Sigma_{1}^{t} := \frac{1}{\lvert \mathcal{N}_{1}^{t} \rvert } \sum_{i \in \mathcal{N}_{1}^{t}} \nabla^{2} g_{i}(x^t),
    \end{aligned}
\end{equation*}
where $f_{i}(x)=\phi(x^{\top}a_{i}^{0})$ and $g_{i}(x)=\phi(-x^{\top}a_{i}^{1})$, and the index sets $\mathcal{N}_{0}^{t}$ and $\mathcal{N}_{1}^{t}$ are sampled uniformly from $\{1,\ldots,N_{0}\}$ and $\{1,\ldots,N_{1}\}$, respectively. For PMQSopt, we use the same practical finite-horizon parameter choice as in our MATLAB prototype, namely $\alpha=\mathcal{O}(\sqrt{T})$ and $\sigma=\mathcal{O}(T^{-1/2})$. The implementations of MLALM and Stoc-iALM use the same mini-batch sizes under their respective update rules.

For each data set, we report two comparison plots: objective value versus the cumulative number of stochastic gradients and constraint violation versus the cumulative number of stochastic gradients. This choice makes the stochastic work used by the three methods directly comparable and emphasizes the sample efficiency of the algorithms.

Figures \ref{fig:np-mnist-obj-grad}--\ref{fig:np-gisette-cons-grad} summarize the numerical behavior on the three data sets. Across these tests, PMQSopt attains competitive objective values while keeping the Neyman-Pearson constraint under control. Compared with MLALM, PMQSopt often reaches a lower objective value with a smaller stochastic-gradient budget, although MLALM can be more aggressive in reducing the feasibility error on some instances. Relative to Stoc-iALM, PMQSopt typically requires fewer stochastic gradient evaluations to reach the same level of objective decrease. Overall, the results indicate that PMQSopt provides a favorable objective-feasibility tradeoff for nonconvex Neyman-Pearson classification.

\begin{figure}[!htb]
	\centering
	\subfloat[Objective value.]{\expplot{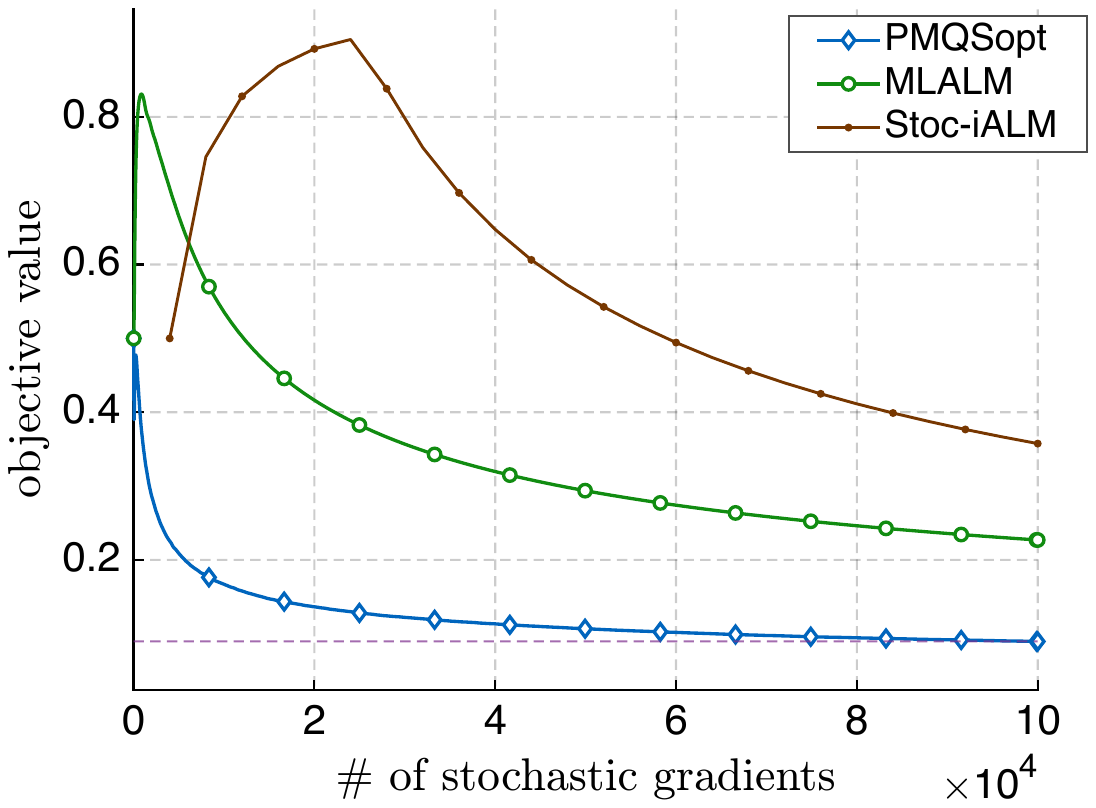}
	\label{fig:np-mnist-obj-grad}}
	\hfill
	\subfloat[Constraint violation.]{\expplot{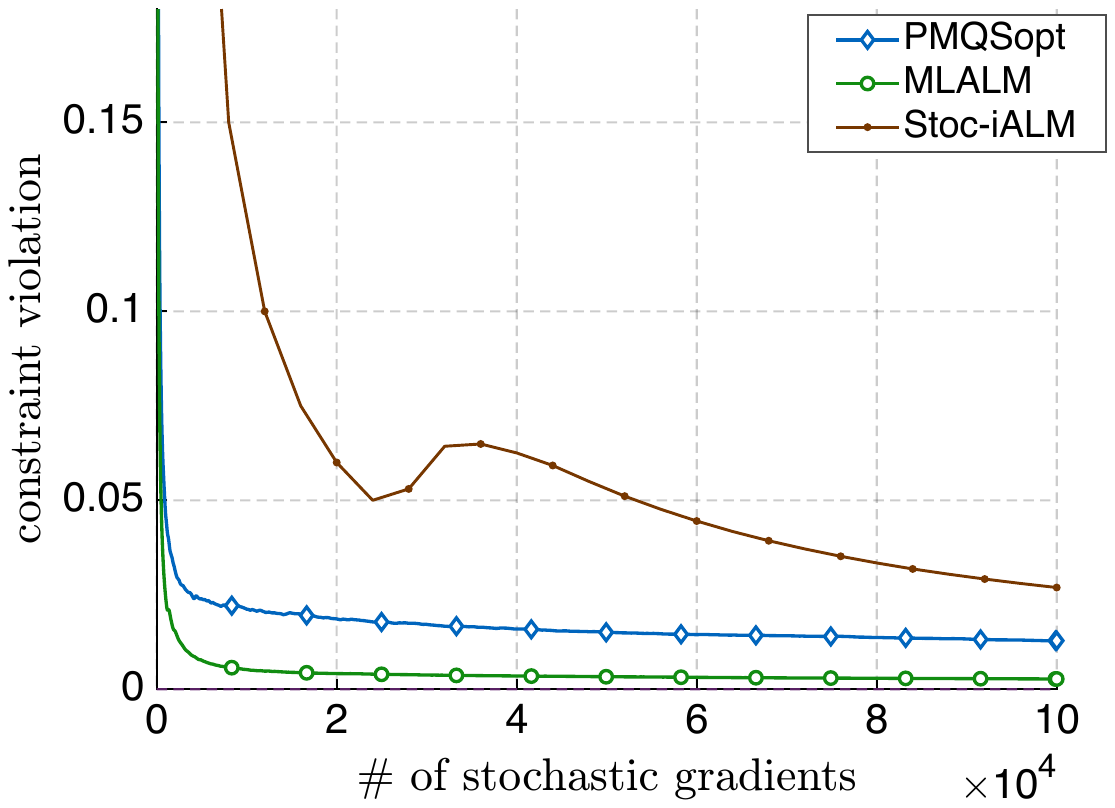}
	\label{fig:np-mnist-cons-grad}}
	\caption{Objective value and constraint violation versus the number of stochastic gradients on MNIST for nonconvex Neyman-Pearson classification.}
\end{figure}

\begin{figure}[!htb]
	\centering
	\subfloat[Objective value.]{\expplot{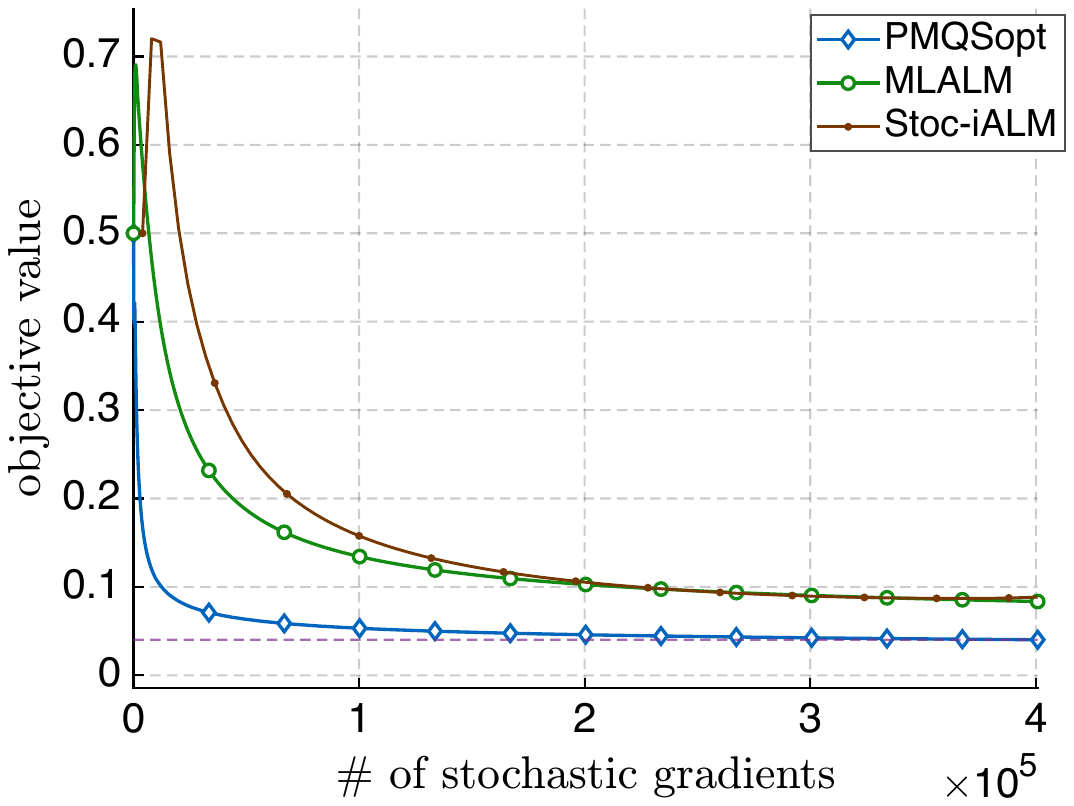}
	\label{fig:np-cina-obj-grad}}
	\hfill
	\subfloat[Constraint violation.]{\expplot{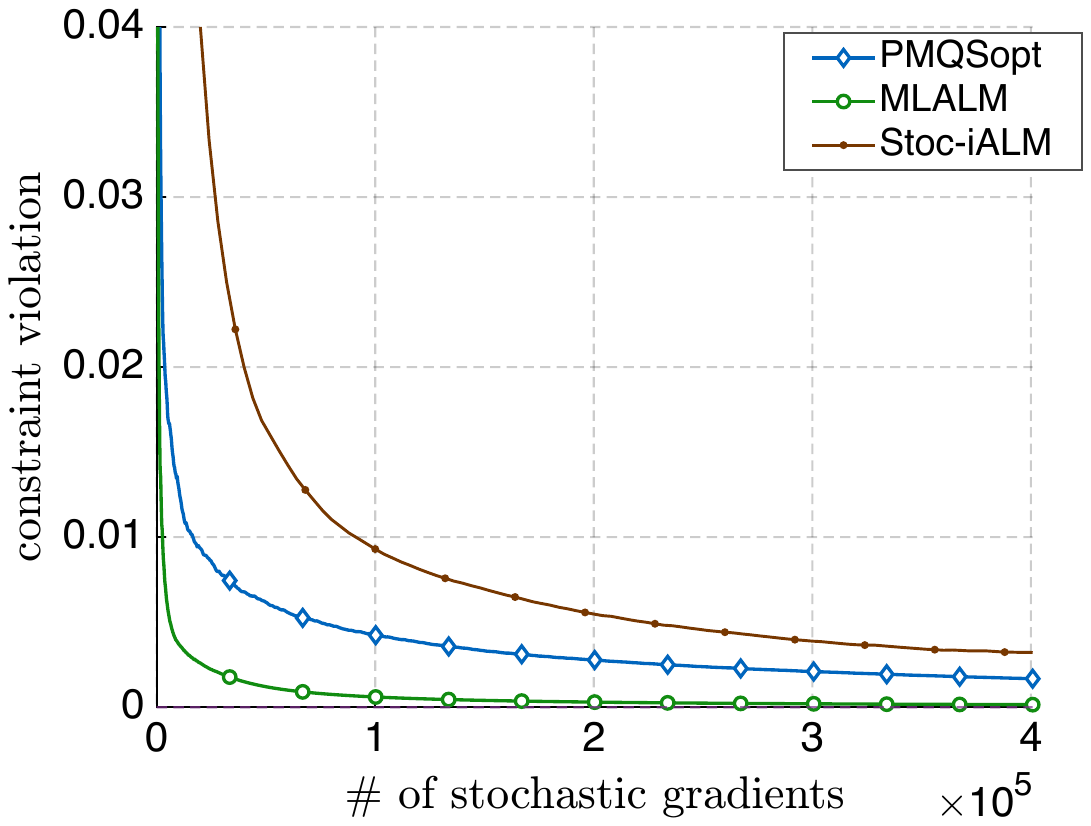}
	\label{fig:np-cina-cons-grad}}
	\caption{Objective value and constraint violation versus the number of stochastic gradients on CINA for nonconvex Neyman-Pearson classification.}
\end{figure}

\begin{figure}[!htb]
	\centering
	\subfloat[Objective value.]{\expplot{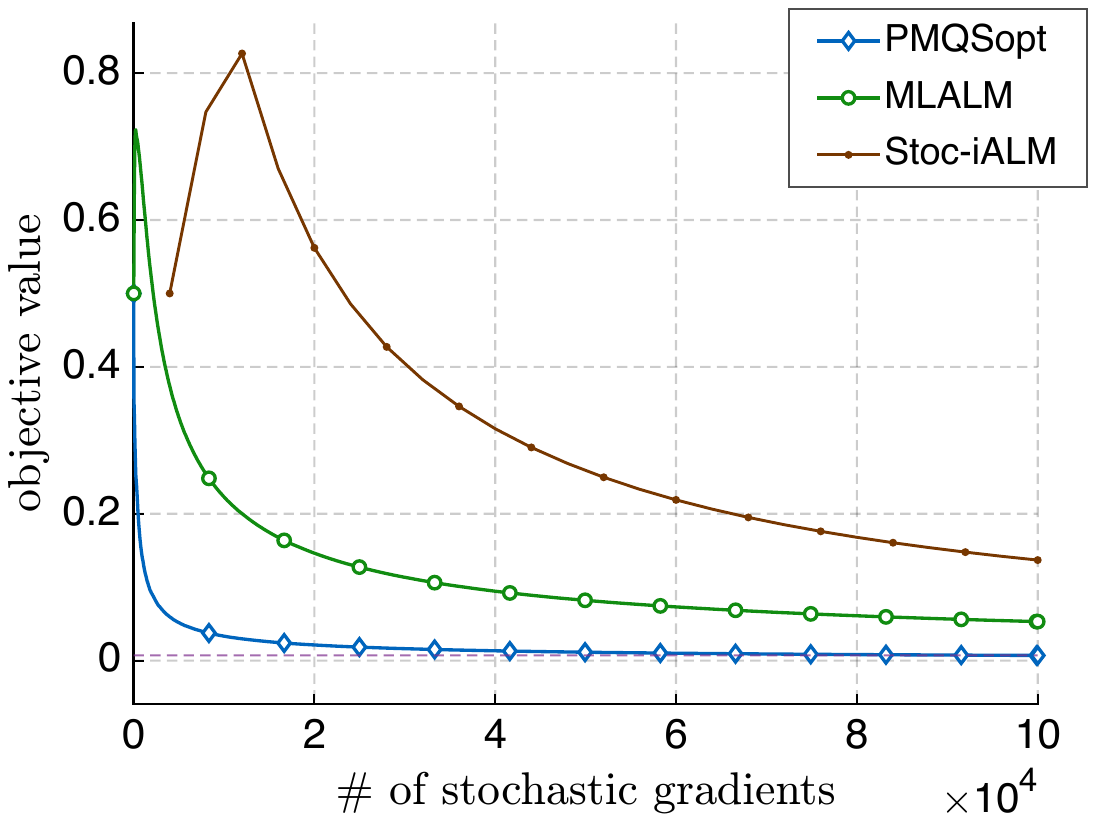}
	\label{fig:np-gisette-obj-grad}}
	\hfill
	\subfloat[Constraint violation.]{\expplot{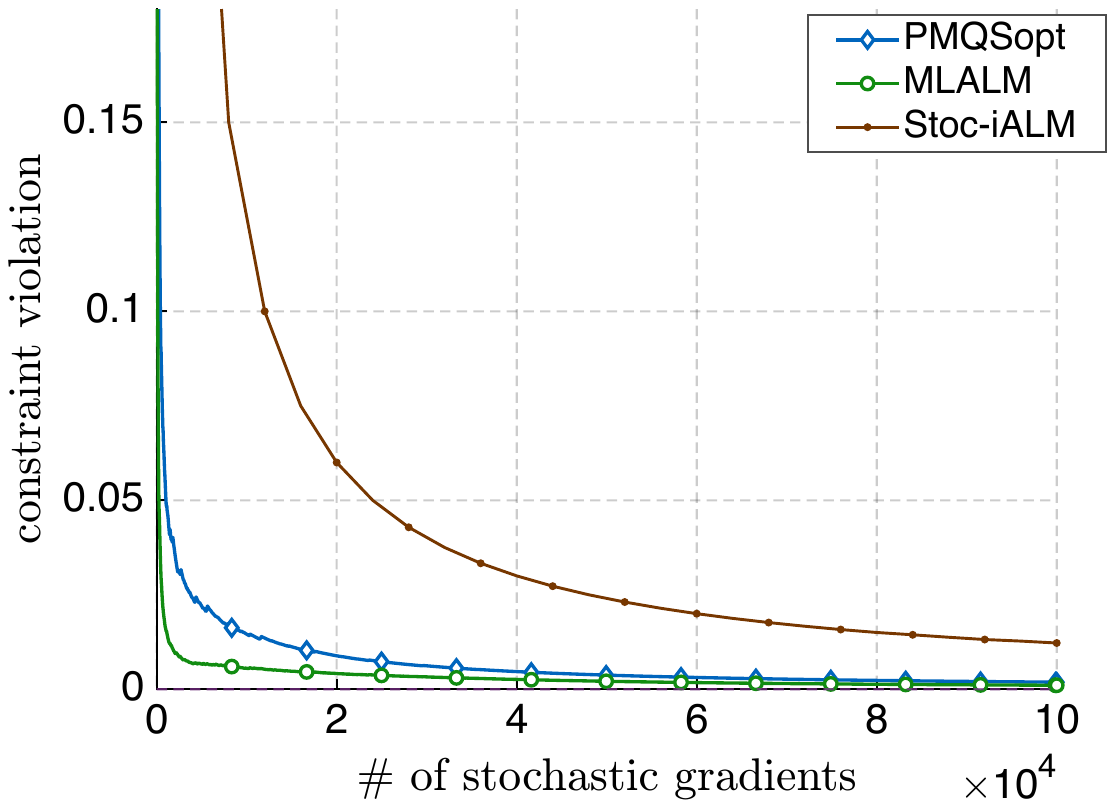}
	\label{fig:np-gisette-cons-grad}}
	\caption{Objective value and constraint violation versus the number of stochastic gradients on gisette for nonconvex Neyman-Pearson classification.}
\end{figure}

\FloatBarrier
\subsection{Nonconvex fairness constrained classification}
We next consider a nonconvex fairness constrained classification problem. This experiment examines the applicability of PMQSopt to nonconvex constrained learning models with a fairness-type constraint. Let $D$ denote the whole training set, let $S$ denote the samples belonging to a protected group, and let $S_{\min}\subseteq S$ denote the minority subgroup. The model used in the experiment is
\begin{equation}
	\begin{aligned}
		\min_{x}\quad & f(x):=\frac{1}{|D|}\sum_{(a,b)\in D}\phi_{\alpha}\big(\log(1+\exp(-b a^\top x))\big)\\
		\mathrm{s.t.}\quad & g(x):=c\frac{1}{|S|}\sum_{a\in S}\sigma(a^\top x)
		-\frac{1}{|S_{\min}|}\sum_{a\in S_{\min}}\sigma(a^\top x)\le 0,
	\end{aligned}
\end{equation}
where $\sigma(u)=1/(1+\exp(-u))$, $\phi_{\alpha}(u)=\alpha\log(1+u/\alpha)$ is the truncated logistic-loss transformation used in the MATLAB implementation with $\alpha=2$, $c=\tau |S|/|S_{\min}|$, and $\tau$ controls the admissible fairness violation level. This experiment uses the same three algorithms as above. The mini-batch sizes are $30$ for both the objective and constraint estimators, and the reported curves are averaged over 5 independent runs. The data sets are a9a, student-por, and student-mat, with $\tau=0.1$, $\tau=0.62$, and $\tau=0.55$, respectively.

Figures \ref{fig:fairness-a9a-grad}--\ref{fig:fairness-student-mat-grad} show the objective value and fairness constraint violation with respect to the cumulative number of stochastic gradients. On these data sets, PMQSopt typically attains the lowest or comparable final objective value among the compared methods while keeping the fairness violation at a comparable scale. MLALM can be more aggressive in reducing the fairness violation on some instances, whereas Stoc-iALM tends to produce more conservative but slower objective decrease. Overall, the results suggest that PMQSopt provides a favorable objective-feasibility tradeoff under the same stochastic-gradient budget.

\begin{figure}[!htb]
	\centering
	\subfloat[Objective value.]{\expplot{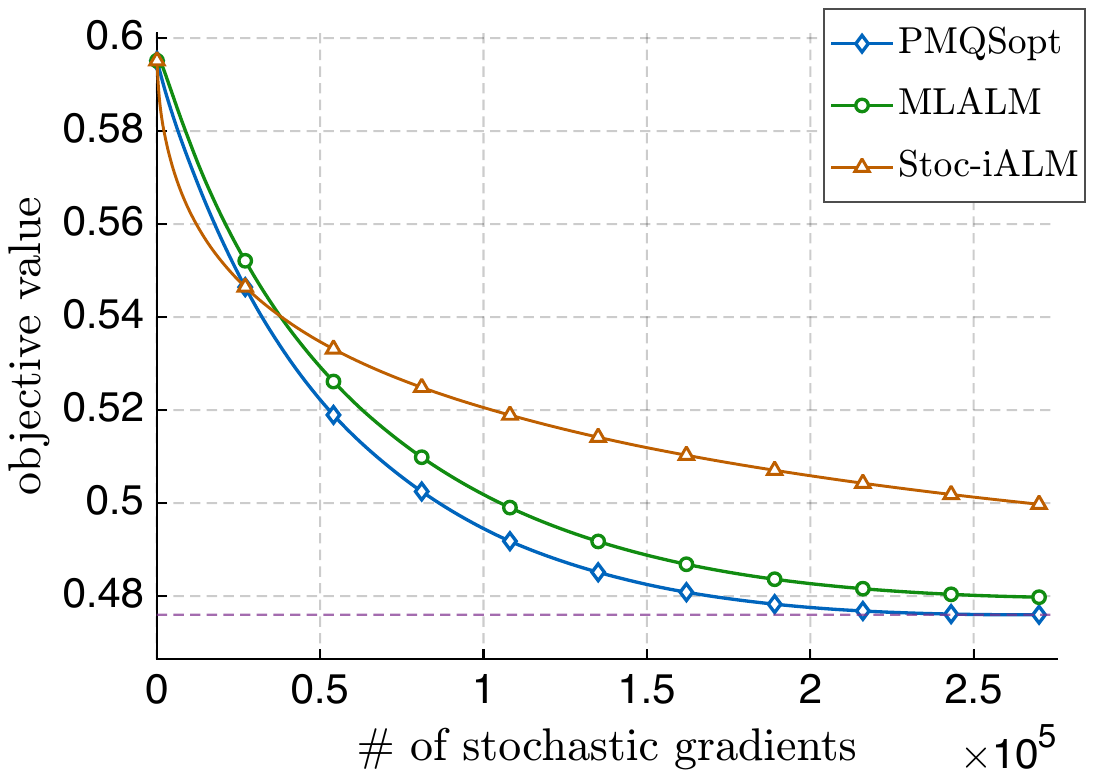}}
	\hfill
	\subfloat[Constraint violation.]{\expplot{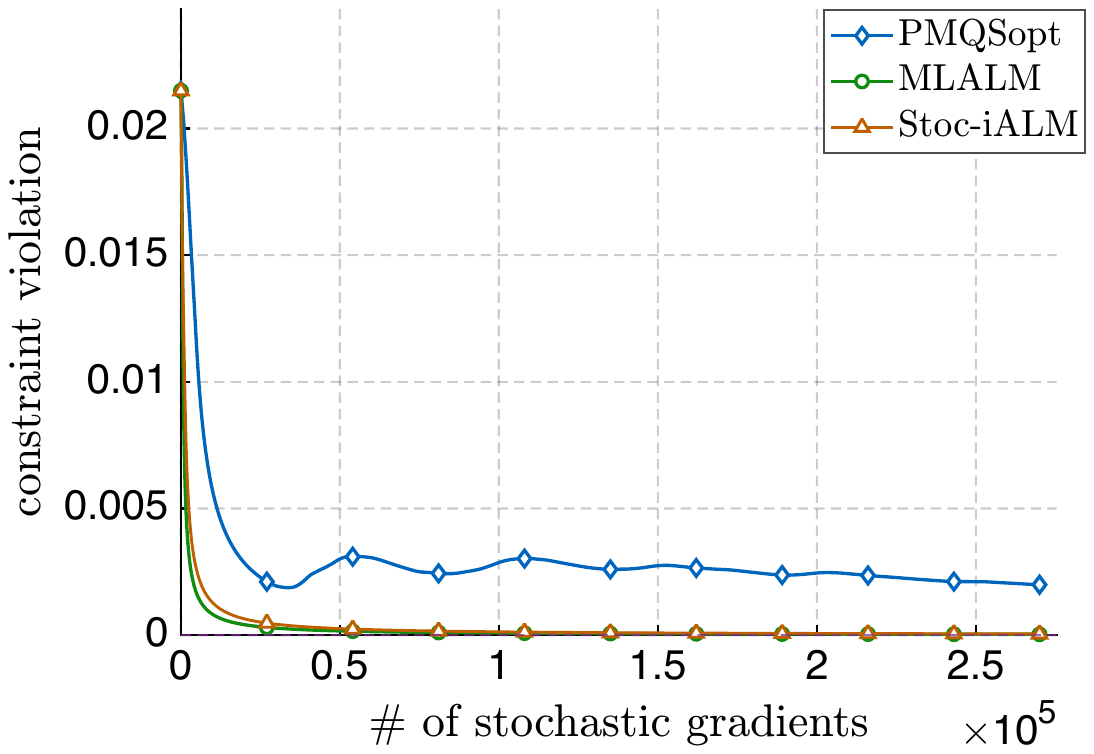}}
	\caption{Objective value and fairness constraint violation versus the number of stochastic gradients on the a9a data set.}
	\label{fig:fairness-a9a-grad}
\end{figure}

\begin{figure}[!htb]
	\centering
	\subfloat[Objective value.]{\expplot{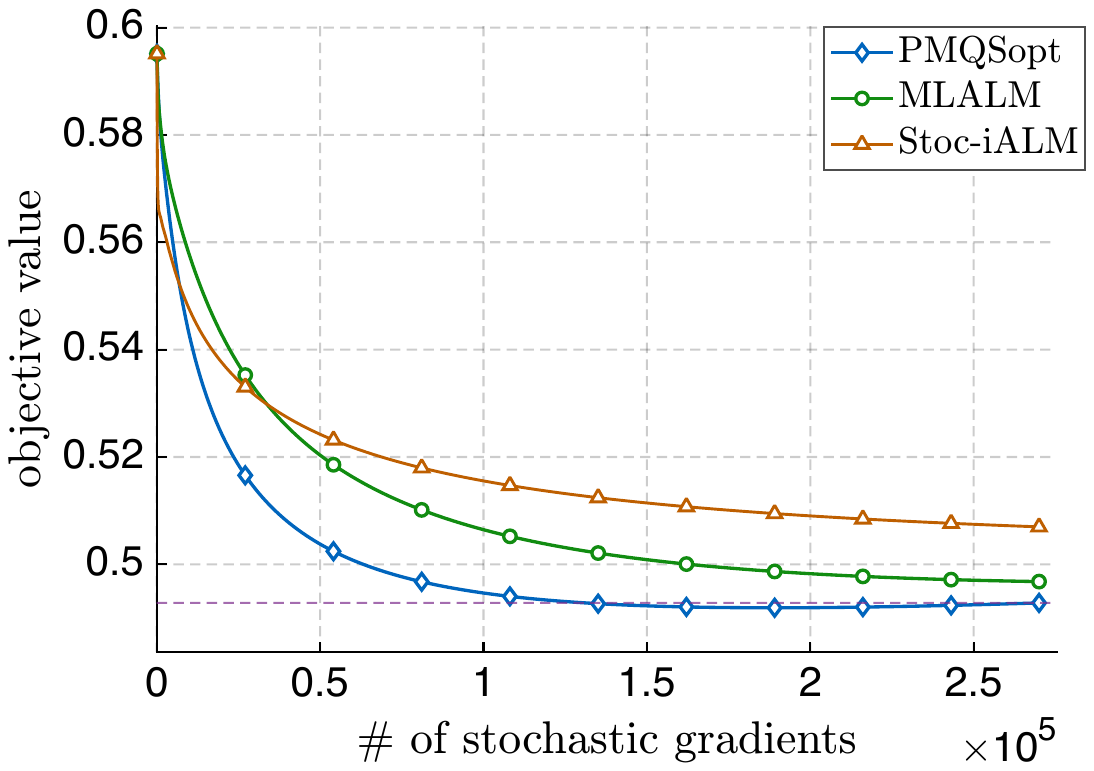}}
	\hfill
	\subfloat[Constraint violation.]{\expplot{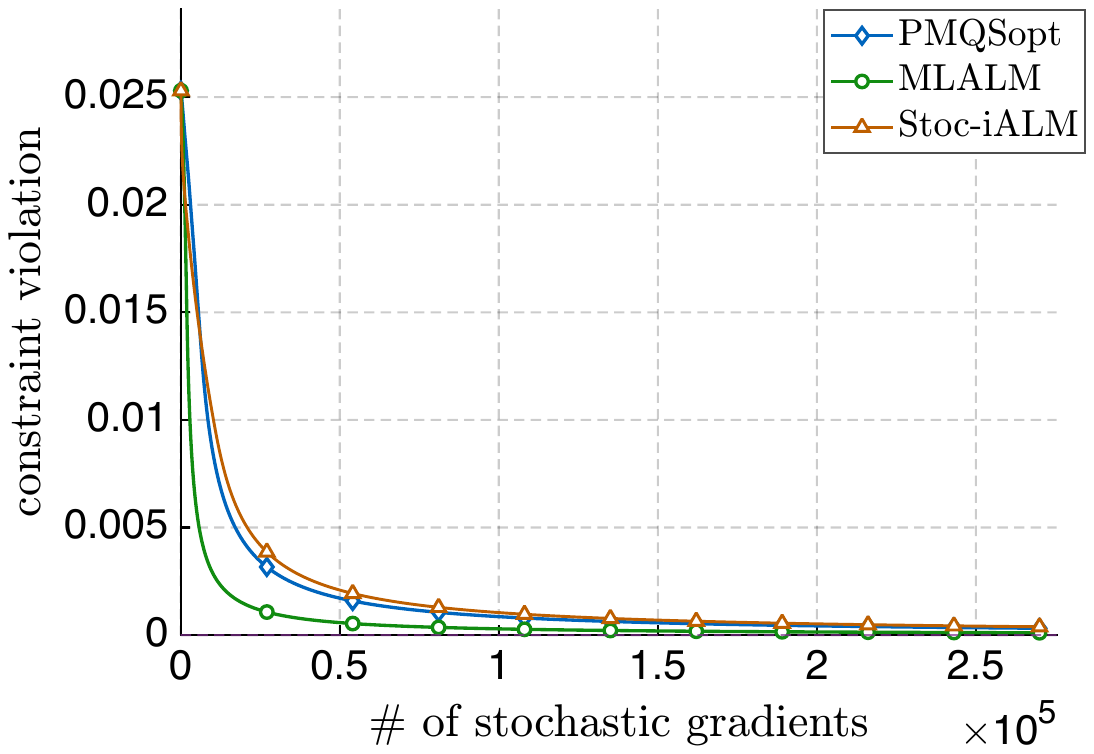}}
	\caption{Objective value and fairness constraint violation versus the number of stochastic gradients on the student-por data set.}
	\label{fig:fairness-student-por-grad}
\end{figure}

\begin{figure}[!htb]
	\centering
	\subfloat[Objective value.]{\expplot{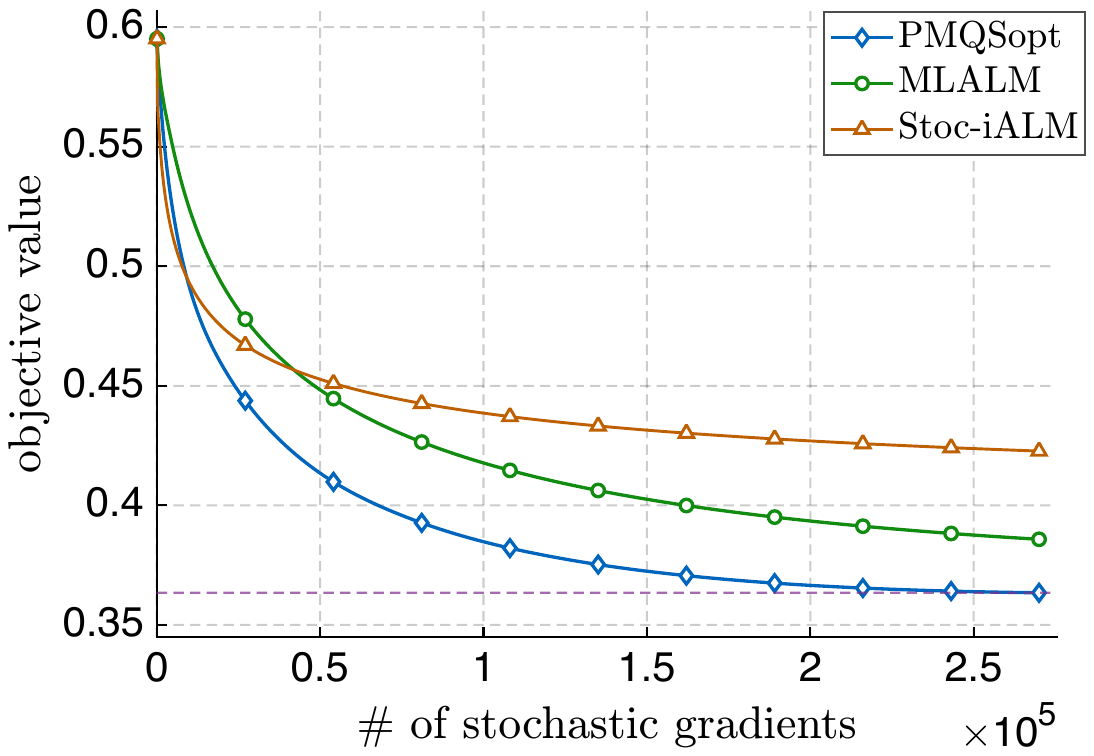}}
	\hfill
	\subfloat[Constraint violation.]{\expplot{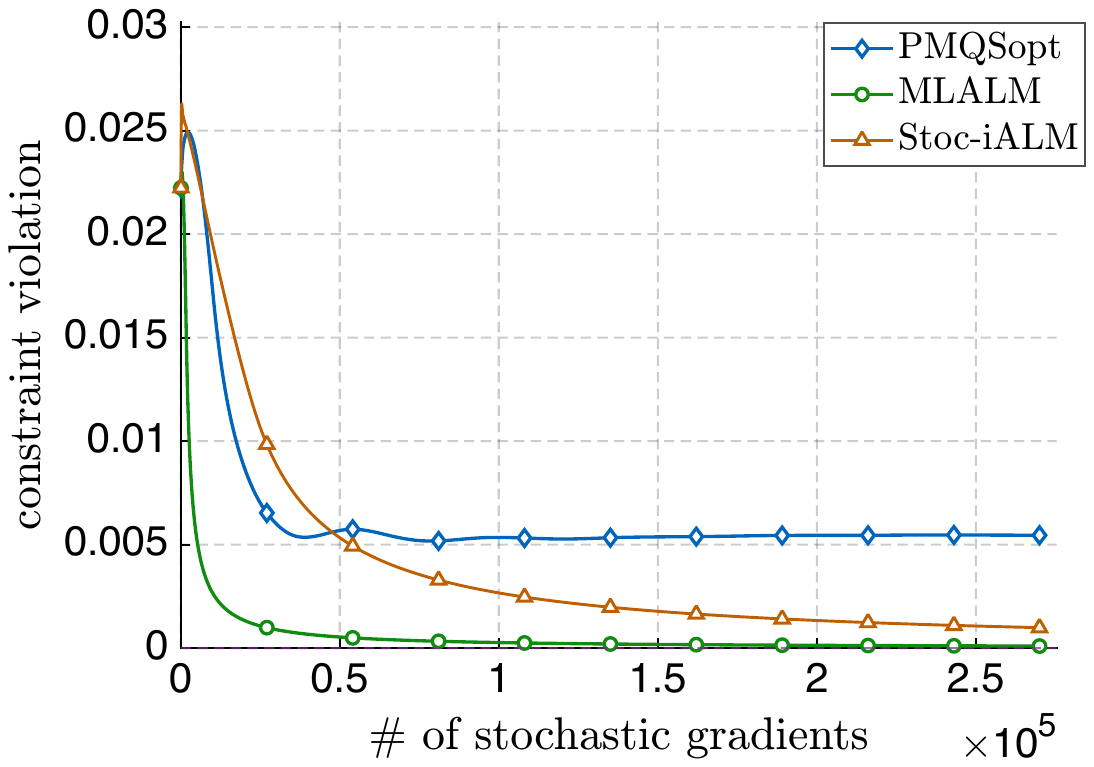}}
	\caption{Objective value and fairness constraint violation versus the number of stochastic gradients on the student-mat data set.}
	\label{fig:fairness-student-mat-grad}
\end{figure}

\FloatBarrier

\section{Conclusion}\label{Sec7}
\setcounter{equation}{0}
In this paper, we present a stochastic approximation method for solving nonconvex constrained stochastic optimization problems defined by expectations. The proposed method is based on the proximal method of multipliers applied to quadratic approximations of the original problem. We show that, when the objective and constraint functions are weakly convex, the algorithm achieves an expected convergence rate of ${\rm O}(T^{-1/4})$ for the average squared norm of the gradient of the Moreau envelope of the Lagrangian, the average constraint violation, and the average complementarity violation. Moreover, we demonstrate that, with high probability, PMQSopt attains ${\rm O}(T^{-1/4})$ bounds for the Lagrangian gradient violation, constraint violation, and complementarity violation. When the constraint functions are convex and the subproblem in PMQSopt is solved via its dual, the algorithm can be implemented as a practical projection method for the stochastic optimization problem. Numerical results illustrate its competitiveness on the tested instances.

A key advantage of the proposed PMQSopt algorithm is that its theoretical guarantees are established under only two mild conditions: (i) weak convexity of all problem functions, and (ii) the existence of a strictly feasible point. Furthermore, the algorithm is designed as a sequentially strongly convex programming method, ensuring that its subproblems can be solved efficiently.

It should be noted that, under different settings, \cite{Boob2023} established an $\mathcal{O}(T^{-3})$ complexity for achieving an $(\varepsilon,\delta)$-KKT point, while \cite{Shi2025} developed a stochastic approximation algorithm with favorable sample complexity for problems involving expectation-valued objective functions and both equality and inequality constraints. Therefore, improving the complexity of PMQSopt remains an important direction from a complexity-theoretic perspective.

For general nonconvex stochastic optimization problems under weaker conditions than those considered in this paper, analyzing the sample complexities remains a challenging but interesting direction for future research.

\end{document}